\newcommand{\vphi}{\vcg{\varphi}}
\newcommand{\kappad}{\kappa_{\delta}}
\newcommand{\dx}{{\, \rm d}x}
\newcommand{\Div}{{\rm div}\,}
\newtheorem{thm}{Theorem}
\newtheorem{lem}[thm]{Lemma}
\newtheorem{prop}[thm]{Proposition}
\newtheorem{df}{Definition}
\newtheorem{rmk}{Remark}
\newcommand{\Ov}[1]{\overline{#1}}
\newcommand{\Un}[1]{\underline{#1}}
\newcommand{\bo}{| _{\partial\Omega}}
\newcommand{\vy}{Y_{k}}
\newcommand{\vf}{{\vc F}}
\newcommand{\vS}{{\bf S}}
\newcommand{\vw}{\omega}
\newcommand{\vr}{\varrho}
\newcommand{\vp}{\varphi}
\newcommand{\vrd}{\vr_\delta}
\newcommand{\vcYd}{\vec{Y}_\delta}
\newcommand{\vtd}{\vt_\delta}
\newcommand{\vud}{\vu_\delta}
\newcommand{\vt}{\vartheta}
\newcommand{\vu}{\vc{u}}
\newcommand{\vc}[1]{{\bf #1}}
\newcommand{\vcg}[1]{{\pmb #1}}
\newcommand{\F}[1]{$\mathbb{#1}$}
\newcommand{\Grad}{\nabla}
\newcommand{\tn}[1]{\mbox {\F #1}}
\newcommand{\dS}{{\rm d}S}
\newcommand{\lr}[1]{\left( #1 \right)}
\newcommand{\intO}[1]{\int_{\Omega} #1\dx}
\newcommand{\intpO}[1]{\int_{\partial\Omega} #1 \, {\rm d} {S}}
\newcommand{\intOB}[1]{\int_{\Omega} \left( #1 \right) \ \dx}
\newcommand{\D}{{\vc{D}}}
\newcommand{\ep}{\varepsilon}
\newcommand{\R}{\mathbb{R}}
\newcommand{\eq}[1]{\begin{equation}
\begin{split}
#1
\end{split}
\end{equation}}
\newcommand{\eqh}[1]{\begin{equation*}
\begin{split}
#1
\end{split}
\end{equation*}}
\newcommand{\sumkN}[1]{\sum_{k=1}^n #1}
\newcommand{\sumlN}[1]{\sum_{l=1}^n #1}
\begin{document}

\title{Weak and variational entropy solutions to the system describing steady flow
of a compressible reactive mixture}
\author{Tomasz Piasecki$^1$ and  Milan Pokorn\' y$^2$}
\maketitle

\bigskip

\centerline{ $^1$ Institute of Mathematics, Polish Academy of Sciences}
\centerline{ul. \'Sniadeckich 8, 00-656 Warszawa, Poland}
\centerline{e-mail: {\tt piasecki@impan.pl}}

\centerline{$^{2}$ Charles University, Faculty of Mathematics and Physics}
\centerline{Mathematical Inst. of Charles University, Sokolovsk\' a 83, 186 75 Prague 8, Czech Republic}
\centerline{e-mail: {\tt pokorny@karlin.mff.cuni.cz}}
\vskip0.25cm
\begin{abstract}
We consider a system of partial differential equations which describes steady flow of a compressible heat conducting chemically reacting gaseous mixture. We extend the result from Giovangigli, Pokorn\'y, Zatorska (2015) in the sense that we introduce the variational entropy solution for this model and  prove existence of a weak solution for $\gamma >\frac 43$ and existence of a variational entropy solution for any $\gamma >1$. The proof is based on improved density estimates.
\end{abstract}

\noindent{\bf MSC Classification:} 76N10, 35Q30

\smallskip

\noindent{\bf Keywords:} steady compressible Navier--Stokes--Fourier system; weak solution; variational entropy solution; multicomponent diffusion flux; entropy inequality

\section{Introduction}

Chemically reacting mixtures appear in many real-life situations, especially in chemical engineering (\cite{Ro95}), combustion (\cite{Wi111}), description of some atmospheric phenomena (\cite{SePa97}) and many others. There are many models of mixtures which can be derived from different  general physical models depending on the phenomena which we want to study. We may start from molecular theories like the kinetic theory, statistical mechanics and thermodynamics or from the macroscopic theories like continuum physics and continuum thermodynamics.

Here, we rely on the latter. 
We continue  the program started in \cite{MuPoZa} which was applied to a special situation for the steady problem in \cite{GPZ}.  

More precisely, we investigate a system of partial differential equations describing steady flow of chemically
reactive, heat conducting, gaseous mixture. The system, which composes of the steady compressible Navier--Stokes--Fourier system coupled with the balance of mass fractions, reads 
\begin{equation}\label{1.1}
\begin{array}{c}
\Div (\vr \vu) = 0,\\
\Div (\vr \vu \otimes \vu) - \Div \tn{S} + \Grad \pi =\vr \vc{f},\\
\Div (\vr E\vu )+\Div(\pi\vu) +\Div\bf{Q}- \Div (\tn{S}\vu)=\vr\vc{f}\cdot\vu,\\
\Div (\vr Y_k \vu)+ \Div \vf_{k}  =  m_k\vw_{k},\quad k\in \{1,\ldots,n\}.
\end{array}
\end{equation}
In the above equations $\tn{S}$ denotes the viscous part of the stress tensor, 
$\pi$ the internal pressure of the fluid, 
$\vc{f}$ the external force, $E$ the specific total energy, $\bf{Q}$ the heat flux, $\vw_{k}$ the molar production 
rate of the $k$-th species, $\vf_k$ the diffusion flux of the $k$-th species 
and $m_k$ the molar mass of the $k$-th species which we assume to be equal, hence, without loss of generality 
\begin{equation} \label{eq_mass}
m_1=\ldots=m_n=1.
\end{equation} 
System (\ref{1.1}) is supplemented by the no-slip boundary conditions for the velocity
\begin{equation} \label{1.6}
\vu \bo= \vc{0},
\end{equation}
together with
\begin{equation} \label{1.7}
 \vf_{k}\cdot\vc{n}\bo=0,
\end{equation}
and the Robin boundary condition for the heat flux 
\begin{equation}\label{1.8}
-\vc{Q}\cdot\vc{n}+L(\vt-\vt_{0})=0
\end{equation}
which means that the heat flux through the boundary is proportional to the difference of the temperature 
inside $\Omega$ and the known external temperature $\vt_{0}$.
The coefficient $L$ describes thermal insulation of the boundary and 
for simplicity we assume it to be constant.
We further prescribe the total mass of the mixture 
\begin{equation}\label{conserva}
\intO{\vr}=M>0.
\end{equation}
The mass fractions $Y_k$, $k\in\{1,\ldots,n\}$, are defined by $Y_k=\frac{\vr_k}{\vr}$.
Thus, by definition, they satisfy
\begin{equation}
\sumkN Y_k=1\label{brak}.
\end{equation}
Concerning the chemical production rates, we assume them to be sufficiently regular, bounded 
functions of $\vr,\vt$ and $Y_k$ such that 
$$
\omega_k \geq 0 \quad \textrm{for} \quad Y_k=0.
$$ 
We also assume
\begin{equation} \label{omega_small}
\omega_k \geq - CY_k^r \quad \textrm{for some} \quad C,r>0,
\end{equation}
which means that a species cannot decrease faster 
than proportionally to some positive power of its fraction (a possible natural choice is $r=1$).  
The stress tensor $\tn{S}$ is given by the Newton rheological law as
\begin{equation} \label{vis_ten}
{\tn S} = {\tn S}(\vt, \Grad\vu)= \mu\Big[\Grad \vu+(\Grad \vu)^{t}-\frac{2}{3}\Div \vu \tn{I}\Big]+\nu(\Div \vu)\tn{I},
\end{equation}
where $\mu=\mu(\vt)>0,\ \nu=\nu(\vt)\geq 0$, Lipschitz continuous functions in $\R^+$, are the shear and bulk viscosity coefficients, respectively, 
on which we assume
\begin{equation} \label{growth_mu}
\Un{\mu}(1+\vt)\leq\mu(\vt)\leq\Ov{\mu}(1+\vt),\quad 0\leq\nu(\vt)\leq\Ov{\nu}(1+\vt)
\end{equation}
for some positive constants $\underline{\mu},\overline{\mu},\overline{\nu}$,
and $\tn{I}$ is the identity matrix.
\subsection{Thermodynamic relations}

\medskip

\noindent {\bf Pressure and internal energy.}
We consider the pressure $\pi=\pi(\vr,\vt)$ with following form
\begin{equation}\label{defp}
\pi =\pi(\vr,\vt)=\pi_{c}(\vr)+\pi_{m}(\vr,\vt),
\end{equation}
where the molecular pressure $\pi_{m}$ obeys the Boyle law 
\begin{equation}\label{mol}
\pi_{m}=\sumkN\vr Y_k\vt=\vr\vt.
\end{equation}
It represents the pressure for an ideal mixture of $n$ species, 
with molar masses  equal to $1$. Moreover, without loss of generality, the gaseous constant equals one.
The first component of \eqref{defp}, $\pi_{c}$, is the so called {\it cold pressure}. 
We assume it in the form
\begin{equation} \label{pc}
\pi_{c}=\vr^{\gamma}, \quad \gamma>1.
\end{equation}
Indeed, a more general form of the cold pressure may be treated. The only important assumptions are that $\pi_c(\vr) \sim \vr^\gamma$ for $\vr$ large, $\pi\in C([0,\infty)) \cap C^1((0,\infty))$, strictly increasing in $\R^+$.
 
The specific total energy $E$ is a sum of the specific kinetic and specific internal energies
\begin{equation*}
E= E(\vr,\vu,\vt,Y_1,\ldots,Y_n)=\frac{1}{2}|\vu|^{2}+e(\vr,\vt,Y_1,\ldots,Y_n).
\end{equation*}
The internal energy consists of two components corresponding to the components of the pressure 
\begin{equation*}
e=e_{c}(\vr)+e_{m}(\vt,Y_1,\ldots,Y_n),
\end{equation*}
where the cold energy $e_c$ and the molecular internal energy $e_m$ are given by
\begin{equation*}
e_{c}=\frac{1}{\gamma-1}\vr^{\gamma-1},\qquad\qquad e_m= \sumkN Y_ke_k=\vt\sumkN c_{vk}Y_k.
\end{equation*}
Here, $c_{vk}$ are the mass constant-volume specific heats and can be different for different species. Under our assumption \eqref{eq_mass}
the constant-pressure specific heat, denoted by $c_{pk}$, equals
\begin{equation}\label{cpcv}
c_{pk}=c_{vk}+1,
\end{equation} 
and both $c_{vk}$ and $c_{pk}$ are assumed to be constant. 

\medskip

\noindent
{\bf Entropy.} According to the second law of thermodynamics, there exists a differentiable function called the {\it specific entropy of the mixture} $s(\vr,\vt,Y_1,\ldots,Y_n)$.  It can be expressed in terms of the partial specific entropies $s_{k}=s_k(\vr,\vt,Y_{k})$ of the $k$-th species
\begin{equation}\label{def_s}
s=\sumkN Y_k s_{k}.
\end{equation}
The Gibbs formula relates the differential of entropy  to the differential of energy, total density and mass fractions as follows
\begin{equation}\label{Gibbs}
\vt \D s=\D e+\pi\D\left({\frac {1}{\vr}}\right)-\sumkN g_{k}\D \vy,
\end{equation}
with the Gibbs functions
\begin{equation}\label{defg}
g_{k}=h_{k}-\vt s_{k}.
\end{equation}
Here $h_k=h_k(\vt)$, $s_{k}=s_{k}(\vr,\vt, Y_{k})$ denote the specific enthalpy and the specific entropy of the $k$-th species, respectively, with the following exact forms   
\eqh{h_k= c_{pk}\vt,\quad s_k= c_{vk}\log\vt-\log\vr-\log{Y_k},}
and we assume 
\begin{equation} \label{g_omega_pos}
-\sumkN g_k \omega_k \geq 0.
\end{equation}
The cold pressure and the cold energy correspond to  isentropic processes. Using (\ref{Gibbs}) it is possible to derive an equation for the specific entropy $s$
\begin{equation}\label{entropy}
\Div(\vr s\vu)+\Div\left( \frac{\vc{Q}}{\vt}-\sumkN \frac{g_{k}}{\vt}\vf_{k}\right)=\sigma,
\end{equation}
where $\sigma$ is the entropy production rate
\begin{equation} \label{sigma}
\sigma=\frac{{\tn S}:\Grad\vu}{\vt}-{\frac{\vc{Q}\cdot\Grad\vt}{\vt^{2}}}-\sumkN\vf_{k}\cdot\Grad\left({\frac{g_{k}} {\vt}}\right)-\frac{\sumkN g_{k}\vw_{k}}{\vt}.
\end{equation}

\subsection{The form of transport fluxes}

\medskip

\noindent {\bf Heat flux.}
The heat flux $\bf{Q}$ consists of two terms. 
The first one represents the transfer of energy due to the species molecular diffusion and the second one the Fourier law,
\begin{equation}\label{eq:heatd}
\vc{Q}=\sumkN h_k \vf_{k}+\vc{q},\quad \quad \vc{q}=-\kappa\Grad\vt,
\end{equation}
where $\kappa=\kappa(\vt)$ is the thermal conductivity coefficient on which we assume 
\begin{equation} \label{growth_kappa}
\underline{\kappa}(1+\vt^{m})\leq\kappa(\vt)\leq\Ov{\kappa}(1+\vt^{m})
\end{equation}
for some constants $m,\underline{\kappa},\overline{\kappa}>0$.

\medskip

\noindent {\bf Diffusion flux.}
The diffusion flux of the $k$-th species $\vf_{k}$ is given by
\begin{equation}
\vf_{k}=-\vr Y_k\sumlN D^\vr_{kl}\Grad Y_l,
\label{eq:diff1}
\end{equation}
where $D^\vr_{kl}=D^\vr_{kl}(\vr,\vt,Y_1,\ldots,Y_n)$, $k,l=1,\ldots,n$ are the multicomponent diffusion coefficients. 
The coefficients $\vr D^\vr_{kl}$ depend only on $\vt$ and $Y_1,\ldots,Y_n$ (see  \cite{VG}), therefore we introduce another matrix
$$\tn{D} = (D_{kl})_{k,l=1}^{n}=\vr (D^\vr_{kl})_{k,l=1}^{n}=(D_{kl}(\vt,Y_1,\ldots,Y_n))_{k,l=1}^{n}.$$
We denote by $N(\tn{D})$  the nullspace of the matrix $\tn{D}$, $R(\tn{D})$ its range, and ${\vec{Y}}^{\bot}$ is the orthogonal complement of $\vec{Y}$.
The diffusion matrix $\tn{D}$ has the following properties which are  discussed in \cite[Chapter 7]{VG}:
	\begin{equation}\label{prop}
	\begin{gathered}
		\tn{D}=\tn{D}^t,\quad
		N(\tn{D})=\R  \vec{Y},\quad
		R(\tn{D})={\vec{Y}}^{\bot},\\
		\tn{D} \quad\text{ is positive semidefinite over } \R^n.
	\end{gathered}
	\end{equation}
Note that we assumed $\vec{Y}=(Y_1,\ldots,Y_n)^t>0$.

Furthermore, the matrix $\tn{D}$ is homogeneous of a non-negative order with respect to $Y_1,\ldots,Y_n$ and $D_{ij}$ are differentiable functions of $\vt,Y_1,\ldots,Y_n$ for any $i,j\in\{1,\ldots,n\}$ such that 
$$
|D_{ij}(\vt,\vec{Y})| \leq C(\vec{Y}) (1+\vt^a)
$$
for some $a\geq 0$. Denoting ${\vec{U}}=(1,\ldots,1)^{t}$, the form of $\vf_k$ implies in particular $\{\vf_k\}_{k=1}^n \in {\vec{U}}^{\bot}$ which yields
\begin{equation} \label{sum_F}
\sumkN \vf_k=\vc{0}.
\end{equation} 
Therefore, since the species equations must sum to the continuity equation, we obtain
\begin{equation} \label{sum_omega}  
\qquad \sumkN\vw_k=0.
\end{equation}
%
\subsection{Entropy production rate}
Due to \eqref{prop} the matrix $\tn{D}$ is positive definite over $\vec{U}^\bot$.  
As we shall see now, this property is connected  with the positivity of entropy production rate $\sigma$
defined in \eqref{sigma}. 
Indeed, we have  
\begin{equation}
\nabla \Big(\frac{g_k}{\vt}+c_{pk}\log\vt\Big)=\nabla \log p_k,
\end{equation}
where $p_k=\vr Y_k\vt$.
Therefore \eqref{sigma} may be rewritten in the following form
\begin{equation} \label{sigma2}
\sigma=\frac{\tn{S}:\Grad\vu}{\vt}+{\frac{\kappa|\Grad\vt|^2}{\vt^{2}}}-\sumkN{\vf_{k}}\cdot\Grad\left(\log{p_{k}}\right)-{\frac{\sumkN g_{k}\vw_k}{\vt}}.
\end{equation}
Let us have a look on the structure of the third term. We have
\eq{\label{coerc}
-\sumkN{\vf_{k}}\cdot\Grad\left(\log{p_{k}}\right)=&-\sumkN\frac{\vf_{k}}{p_k}\cdot {\Grad p_{k}}
\\
=&-\sumkN\vf_{k}\cdot \lr{\frac{\Grad Y_{k}}{Y_k}+\frac{\Grad(\vr\vt)}{\vr\vt}}\quad [\mbox{due\ to\ }\eqref{sum_F}]\\
=&\sum_{k,l=1}^n D_{kl}\Grad Y_l\cdot\Grad Y_k\geq c\sumkN\frac{|\Grad Y_k|^2}{Y_k} \geq c \sumkN|\Grad Y_k|^2,
}
where we have used the fact that $\partial_{x_i} \vec{Y} \in \vec{U}^\bot$ for all $i \in \{1,2,\dots, n\}$ due to \eqref{brak} (cf. \cite[Lemma 7.6.1]{VG}) Note that the last inequality is due to the fact that $Y_k \leq 1$, therefore the second last term contains additional information about the mass fractions, but we do not exploit it.  
Now, \eqref{sigma2}, \eqref{coerc}, \eqref{vis_ten}
together with \eqref{g_omega_pos} yields $\sigma\geq 0$.


\section{Weak and variational entropy solutions. Main Results.}
We are now in a position to formulate the definition of weak solutions to our system.
\begin{df}\label{df1}
We say the set of functions $(\vr,\vu,\vt, \vec{Y})$ is a weak solution to problem 
(\ref{1.1}--\ref{conserva}) with assumptions stated above, 
provided 
\begin{itemize}
\item
$\vr \geq 0$ a.e. in $\Omega$, $\vr \in L^{6\gamma/5}(\Omega)$, $\int_{\Omega} \vr\dx=M$ 

\item
$\vu \in W^{1,2}_0(\Omega)$, $\vr |\vu|$ and $\vr |\vu|^2 \in L^{\frac{6}{5}}(\Omega)$

\item 
$\vt \in W^{1,2}(\Omega) \cap L^{3m}(\Omega)$, $\vr \vt, \vr\vt|\vu|, {\tn S}\vu, \kappa|\nabla \vt| \in L^1(\Omega)$

\item
$\vec{Y}\in W^{1,2}(\Omega)$, $Y_k \geq 0$ a.e. in $\Omega$, $\sumkN Y_k = 1$ a.e. in $\Omega$, $\vf_k\cdot \vc{n}|_{\partial \Omega}=0$
\end{itemize}
and the following integral equalities hold\\
$\bullet$ the weak formulation of the continuity equation
\begin{equation}\label{weak_cont}
\intO{\vr \vu\cdot\Grad\psi} = 0
\end{equation}
holds for any test function $\psi\in C^{\infty}(\Ov{\Omega})$;\\
$\bullet$ the weak formulation of the momentum equation
\eq{\label{weak_mom}
-\intO{\big(\vr\left(\vu\otimes\vu\right):\Grad\vcg{\vp}-\tn{S}:\Grad \vcg{\vp}\big)}-\intO {\pi \Div\vcg{\vp}}=\intO{\vr\vc{f}\cdot\vcg{\vp}}
}
holds for any test function $\vcg{\vp}\in  C^{\infty}_{0}(\Omega)$;\\
$\bullet$ the weak formulation of the species equations
\begin{equation}\label{weak_spe}
-\intO{ Y_{k}\vr\vu\cdot\Grad\psi}-\intO{\vf_k\cdot\Grad\psi}=\intO{\vw_{k}\psi}
\end{equation}
holds for any test function $\psi\in C^{\infty}(\Ov{\Omega})$ and for all $k=1,\ldots,n$;\\
$\bullet$ the weak formulation of the total energy balance
\eq{\label{weak_ene}
&-\intO{\left(\frac{1}{2}\vr|\vu|^{2}+\vr e\right)\vu\cdot\Grad\psi}+\intO{\kappa\Grad\vt\cdot\Grad\psi}
-\intO{\left(\sumkN{h_{k}\vf_{k}}\right)\cdot\Grad\psi}\\
&=\intO{\vr\vc{f}\cdot\vu\psi}-\intO{\lr{\tn{S}\vu}\cdot\Grad\psi}+\intO{\pi\vu\cdot\Grad\psi}-\intpO{L(\vt-\vt_{0})\psi}
}
holds for any test function $\psi\in  C^{\infty}(\Ov{\Omega})$.
\end{df}

The admissible range of $\gamma$ in the pressure law \eqref{pc} for which we are able to show 
existence of weak solutions in the above sense is limited mostly by the terms  
$\vr |\vu|^2\vu$ and $\tn{S} \vu$ in the weak formulation of total energy balance. Therefore,
following \cite{NoPo1}, \cite{NoPo2}
we introduce a slightly more general notion of variational entropy solutions to system \eqref{1.1}
which consist in replacing the weak formulation of the total energy balance by the weak formulation of the 
entropy inequality.\\ 
\begin{df}
We say the set of functions $(\vr,\vu,\vt, \vec{Y})$ is a variational entropy solution to problem 
(\ref{1.1}--\ref{conserva}) with assumptions stated above, 
provided
\begin{itemize}
\item
$\vr \geq 0$ a.e. in $\Omega$, $\vr \in L^{s\gamma}(\Omega)$ for some $s>1$, $\int_{\Omega} \vr\dx=M$ 

\item
$\vu \in W^{1,2}_0(\Omega)$, $\vr \vu \in L^{\frac{6}{5}}(\Omega)$

\item
$\vt \in W^{1,r}(\Omega) \cap L^{3m}(\Omega)$, $r>1$, $\vr \vt, \tn {S}:\frac{\nabla \vu}{\vt}, \kappa\frac{|\nabla \vt|^2}{\vt^2}, \kappa\frac{\nabla \vt}{\vt} \in L^1(\Omega)$
$\frac{1}{\vt} \in L^1(\partial \Omega)$

\item
$\vec{Y}\in W^{1,2}(\Omega)$, $Y_k \geq 0$ a.e. in $\Omega$, $\sumkN Y_k = 1$ a.e. in $\Omega$, $\vf_k\cdot \vc{n}|_{\partial \Omega}=0$

\end{itemize}
satisfy equations \eqref{weak_cont}--\eqref{weak_spe}, 
the following entropy inequality
\begin{multline} \label{entropy_ineq}
\int_{\Omega} \frac{ \tn {S} : \nabla \vu}{\vt}\psi \dx
+\int \kappa\frac{|\nabla \vt|^2}{\vt^2}\psi \dx
-\int_{\Omega}\sumkN \omega_k (c_{pk}-c_{vk} \log \vt + \log Y_k)\psi\dx\\
+\int_{\Omega} \psi \sum_{k,l=1}^n D_{kl}\nabla Y_k \cdot \nabla Y_l \dx
+\intpO{\frac{L}{\vt}\vt_0\psi} \leq
\int \frac{\kappa \nabla \vt \cdot \nabla \psi}{\vt} \dx
-\int_{\Omega} \vr s \vu \cdot \nabla \psi \dx\\
-\int_{\Omega} \log \vt \Big(\sumkN \vc{F}_k c_{vk}\Big) \cdot \nabla \psi \dx 
+\int_{\Omega} \Big(\sumkN \vc{F}_k \log Y_k\Big) \cdot\nabla \psi \dx 
+ \intpO{L\psi}
\end{multline}
for all non-negative $\psi \in C^\infty(\overline{\Omega})$
and the global total energy balance (i.e. \eqref{weak_ene} with $\psi \equiv 1$)
\begin{equation} \label{glob_ene}
\intpO{L(\vt-\vt_0)} = \intO{\vr \vc{f}\cdot \vu}.
\end{equation} 
\end{df}
Formally, the entropy inequality \eqref{entropy_ineq} is nothing but a weak formulation of \eqref{entropy}. We  will return to this in the part devoted to the formulation of the approximate solution, where we deduce the approximate entropy (in)equality from the approximate internal energy balance and the approximate momentum balance. Note, however, that we have here inequality instead of equality. This is a consequence of the fact that for sequences of functions which do not converge strongly but only weakly in some spaces we are not able to ensure the corresponding limit passages and we are obliged to use only the weak lower semicontinuity in some terms. Note further that  \eqref{entropy_ineq} does not contain all terms from \eqref{entropy}, some of them are missing. These terms are formally equal to zero due to assumptions that $\omega_k$ and $\vf_k$ sum up to zero. We removed them from the formulation of the entropy inequality due to the fact that we cannot exclude the situation that $\vr=0$ in some large portions of $\Omega$ (with positive Lebesgue measure), thus $\log \vr$ is not well defined there.  However, the variational entropy solution still has the property that any sufficiently smooth variational entropy solution in the sense above is a classical solution to our problem, provided the density is strictly positive in $\Omega$.  

We are now in position to formulate our main result.

\begin{thm} \label{t1}
Let $\gamma > 1$, $M>0$, $m > \max\{\frac 23, \frac{2}{3(\gamma-1)}\}$, $a < \frac{3m}{2}$. Let $\Omega \in C^2$. Then there exists at least one  variational entropy solution to our problem above. Moreover, $(\vr,\vu)$ is the renormalized solution to the continuity equation. 

In addition, if $m > \max\{1,\frac{2\gamma}{3(3\gamma-4)}\}$, $\gamma > \frac 43$, $a< \frac{3m-2}{2}$, then the solution is a weak solution in the sense above.
\end{thm}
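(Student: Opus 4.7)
The plan is to follow the multi-level approximation strategy developed by Novotn\'y--Pokorn\'y for the steady Navier--Stokes--Fourier system, adapted to accommodate the species equations and the multicomponent diffusion structure. At the innermost level I would take a Galerkin basis of dimension $N$ for $\vu$, a uniformly elliptic regularization of the continuity equation preserving the mass constraint $\intO{\vr}=M$, an analogous regularization of the species equations compatible with the zero-flux condition \eqref{1.7}, and an artificial pressure $\dd\vr^\beta$ with $\beta$ large added to $\pi$ in the momentum equation. The approximate internal energy balance is used in place of the total energy one, so that a corresponding entropy \emph{equality} holds pointwise at $\vr,\vt>0$. Existence at this level follows from a Leray--Schauder fixed point, using elliptic regularity for the $\vr$ and $\vec{Y}$ equations together with a minimum principle guaranteeing $\vt\geq\vt^{\star}>0$.

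The a priori estimates are then extracted as follows. The global total energy balance \eqref{glob_ene} controls $\intpO{\vt}$, while testing the approximate entropy balance by $1$ and using \eqref{coerc}, \eqref{growth_mu}, \eqref{growth_kappa} yields bounds for $\vu$ in $W^{1,2}_0$, for $\vt$ in $W^{1,r}\cap L^{3m}$, for $\vec{Y}$ in $W^{1,2}$, and for $1/\vt$ on $\partial\Omega$ via the Robin term. The density enters only through the pressure work, which a priori gives $\vr\in L^{6\gamma/5}$. The \emph{improved} density estimates advertised in the abstract should then be obtained by testing the approximate momentum equation with the Bogovskii lift of $\vr^\alpha-\langle\vr^\alpha\rangle$ for an exponent $\alpha$ calibrated against the temperature integrability; the presence of the molecular pressure $\vr\vt$ forces this calibration to respect the interplay of $\gamma$, $m$ and $a$, and is precisely what dictates the thresholds $\gamma>1$ in the variational-entropy case and $\gamma>4/3$ in the weak-solution case.

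The successive limit passages $N\to\infty$, $\ep\to 0$, $\dd\to 0$ are then performed, the hard step being the strong convergence of the density. I would rely on the effective viscous flux identity of Lions--Feireisl, adapted to temperature-dependent $\mu(\vt),\nu(\vt)$; commuting the Riesz operators with $\mu(\vt)$ requires prior strong convergence of $\vt$, which I would obtain from a Div--Curl argument applied to the internal energy balance combined with the entropy dissipation. Strong convergence of $\vec{Y}$ then follows from the species equations, exploiting the coercivity of $\tn{D}$ on $\vec{U}^\bot$ established in \eqref{coerc} together with the already-available strong convergence of $(\vr,\vu,\vt)$. The renormalized continuity equation holds along the sequence thanks to the $\vr\in L^2$ bound coming from the artificial pressure, and is propagated to the limit by the Feireisl oscillation defect measure argument, which degrades as $\gamma\to 1^{+}$ and is the ultimate source of the admissible range.

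Finally, the two regimes of Theorem~\ref{t1} are distinguished by the way the limit is closed. For the variational entropy solution one passes to the limit only in \eqref{weak_cont}--\eqref{weak_spe}, \eqref{entropy_ineq} and \eqref{glob_ene}, using weak lower semicontinuity on the dissipation side of \eqref{entropy_ineq}; the conditions $m>\max\{2/3,\,2/(3(\gamma-1))\}$ and $a<3m/2$ are exactly those ensuring uniform integrability of every term of \eqref{entropy_ineq} along the approximation sequence. For the weak-solution regime one additionally passes to the limit in \eqref{weak_ene}, where the critical terms $\vr|\vu|^2\vu$, $\pi\vu$ and $\tn{S}\vu$ demand the stronger restrictions $\gamma>4/3$, $m>\max\{1,\,2\gamma/(3(3\gamma-4))\}$, $a<(3m-2)/2$, read off by tracking Bogovskii and interpolation estimates through these higher-order terms. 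I expect the technical heart of the proof to lie in the density compactness at the $\dd\to 0$ limit for $\gamma$ close to $1$, which is precisely where the improved density estimates do their work.
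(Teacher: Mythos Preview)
Your overall architecture---multi-level approximation, entropy-based a priori bounds, effective viscous flux plus oscillation defect measure for density compactness---matches the paper. But there is a genuine gap at the point you identify as ``the technical heart'': you propose to obtain the improved density estimates by testing the momentum equation with the Bogovskii lift of $\vr^\alpha-\langle\vr^\alpha\rangle$. That is only the \emph{preliminary} step (the paper's Lemma~\ref{lem1}), and by itself it does not close for $\gamma$ down to $1$ (variational entropy) or $4/3$ (weak). The estimate it gives is of the form $\|\vr\|_{s\gamma}^{s\gamma}\leq C(1+A^{\theta})$ with $A=\intO{\vr^b|\vu|^2}$, and you still need an independent bound on $A$.

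What the paper actually does---and what produces the thresholds in Theorem~\ref{t1}---is a \emph{local weighted pressure estimate}: test the momentum equation with functions of the type $\frac{x-x_0}{|x-x_0|^\alpha}\tau^2$ (interior), $d(x)\nabla d(x)(d(x)+|x-x_0|^a)^{-\alpha}$ (boundary), and a carefully constructed hybrid for $x_0$ close to but not on $\partial\Omega$ (Lemmas~\ref{lem_int}--\ref{lem_close}), obtaining
\[
\sup_{x_0\in\overline\Omega}\intO{\frac{\pi(\vr,\vt)}{|x-x_0|^\alpha}}\leq C\bigl(1+\|\pi\|_1+\|\vu\|_{1,2}(1+\|\vt\|_{3m})+\|\vr|\vu|^2\|_1\bigr).
\]
This is then fed into a potential-theoretic argument (Lemma~\ref{lem_A}: solve $\Delta h=\vr^b$, bound $\|h\|_\infty$ via the Green kernel and the weighted estimate, integrate by parts to control $A$ by $\|h\|_\infty\|\vu\|_{1,2}^2$) which finally yields $A\leq C(1+A^{\theta'})$ with $\theta'<1$ precisely under the stated constraints on $\gamma,m$. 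The Bogovskii test alone cannot reproduce this, and this is exactly where the paper improves on \cite{GPZ}.

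Two smaller points. Strong convergence of $\vt$ requires no Div--Curl machinery: the entropy bound already gives $\vt\in W^{1,r}(\Omega)$ for some $r>1$, so Rellich--Kondrachov suffices. And the paper uses five approximation levels ($N,\eta,\lambda,\varepsilon,\delta$), the extra parameters $\eta,\lambda$ being needed to regularize the viscous tensor and the species diffusion so that the approximate entropy \emph{equality} is rigorously justified; these are not mere bookkeeping, since several limit passages in the entropy inequality (notably those involving $\log Y_k$ and $\log\vr$) hinge on the $\lambda$-regularization of $s_k$.
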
  

The second part of the main result, dealing with weak solutions, is an improvement of the result from \cite{GPZ}. This is connected with the fact that we will use finer estimates of the density before the last limit passage.

Note further that the assumptions on $\gamma$ and $m$ in both variational entropy and weak solution correspond to those which ensure the existence of the corresponding type of a solution for the steady compressible Navier--Stokes--Fourier system, cf. \cite{MPZ_HB}. Finally, recall that the pair $(\vr,\vu)$ is a renormalized solution to the continuity equation provided $\vu \in W^{1,2}(\Omega)$, $\vr \in L^{\frac 65}(\Omega)$ and for any $b \in C^{1}(0,\infty)\cap C([0,\infty))$, $b'(z) = 0$ for $z\geq M$ for some $M>0$
$$
\int_{\Omega} \Big(b(\vr) \Div \psi +  (b(\vr)-b'(\vr)\vr)\Div \vu \psi\Big) \dx =0
$$
for all $\psi \in C^\infty(\Ov{\Omega})$.

The weak solutions for the compressible Navier--Stokes equations were for the first time considered in the seminal monography by P.L. Lions \cite{Li_Book}. Their existence was shown for $\gamma >\frac 95$. Using more precise estimates of the density, the result was subsequently improved in the papers \cite{FrStWe}, \cite{JeNo} 
and \cite{Ji} to reach the existence of weak solutions for $\gamma>1$. The theory was applied to the compressible Navier--Stokes--Fourier system in the series of papers \cite{NoPo1}, \cite{NoPo2} (here, the notion of variational entropy solutions in the steady case was introduced) and \cite{JeNoPo}. See also \cite{MPZ_HB} for further details.

The system of equations describing the flow of chemically reacting, heat conducting  gaseous mixture was considered firstly in the evolutionary case in the context of variational entropy solutions in \cite{FPT}, however, with Fick's law. A more general multicomponent diffusion flux was in the context of weak solutions considered in \cite{MuPoZa} and in the steady regime in \cite{GPZ}.

\section{Approximation}
Following \cite{GPZ} we will prove our main results introducing five steps of approximation.
The first four are connected with small parameters $\delta > \varepsilon > \lambda > \eta >0$ and the last one, connected with a positive integer $N$, 
is the Galerkin approximation for the velocity. 

Precisely, we introduce the approximation of diffusion flux $\vf_k$:
\begin{equation} \label{def_jk}
{\vc J}_k =-\sumlN Y_k Y_l\widehat{ D}_{kl}(\vt,\vec{Y})\Grad Y_l/Y_l
-\big(\ep(\vr+1) Y_k+\lambda\big)\Grad Y_k/Y_k,\\
\end{equation}
with
\begin{equation} \label{dkl_aprox} 
\widehat {D}_{kl}(\vt,\vec{Y}) = \frac{1}{(\sigma_Y+\ep)^r}  D_{kl}(\vt,\vec{Y}),
\end{equation}
where $\sigma_Y=\sumkN Y_k$. The reason for this notation is that, unless we let $\lambda \to 0^+$, it is not clear whether $\sigma_Y=1$. We only know that $Y_k \geq 0$.
Furthermore, we introduce a regularization of the stress tensor 
\begin{equation}
\tn{S}_{\eta}={\frac{\mu_{\eta}(\vt)}{1+\eta\vt}}\left[\Grad \vu+(\Grad\vu)^t-\frac{2}{3}\Div \vu \, \tn{I}\right]+{\frac{\nu_\eta(\vt)}{1+\eta\vt}}\lr{\Div \vu}\tn{I},
\end{equation}
where $\mu_{\eta},\nu_{\eta}$ are standard mollifications of the viscosity functions.
Next,
\begin{equation} \label{def_kappad}
\kappa_{\delta,\eta} = \kappa^\eta + \delta \vt^B + \delta \vt^{-1}
\end{equation} 
is a regularization of heat conductivity coefficient with $B>0$ sufficiently large which will
be determined later and $\kappa^\eta$ is the mollification of the heat conductivity.    
Compared to \cite{GPZ} we introduce a  
minor modification in the approximation, namely we approximate the fractional entropies with 
\begin{equation} \label{skl}
s_k^\lambda=c_{vk}\log \vt - \log Y_k - \log(\vr+\sqrt{\lambda}). 
\end{equation}
Analogously, we denote 
$$
g_k^\lambda = c_{pk} \vt - \vt s_k^\lambda, \quad s^\lambda=\sumkN Y_k s_k^\lambda.
$$
This modification will enable us to pass to the limit with $\lambda$ in the weak formulation of the entropy inequality,
on the other hand it is harmless for crucial \emph{a priori} estimates for the full approximation.

We are now ready to formulate the approximate problem involving five above mentioned parameters.
Let $\{\vc{w}_n\}_{n=1}^{\infty}$ be an orthogonal basis of $W^{1,2}_0(\Omega)$ such that 
$\vc{w}_i \in W^{2,q}(\Omega)$ for $q<\infty$ (we can take for example eigenfunctions of the Laplace
operator with Dirichlet boundary conditions).  
At the level of full approximation we want to show existence of a set of functions
$(\vr_{N,\eta,\lambda,\ep,\delta},\vu_{N,\eta,\lambda,\ep,\delta},\vec{Y}_{N,\eta,\lambda,\ep,\delta},\vt_{N,\eta,\lambda,\ep,\delta})$ 
(from now on we skip the indices) such that\\
$\bullet$ the approximate continuity equation
\eq{\label{cont_aprox}
\ep\vr+\Div (\vr \vu) &= \ep\Delta\vr+\ep \Ov{\vr},\\
\Grad\vr\cdot\vc{n}\bo&=0,
}
where $\bar \vr = \frac{M}{|\Omega|}$,
is satisfied pointwisely\\
$\bullet$ the Galerkin approximation for the momentum equation (note that the convective term reduces to the standard form provided $\Div(\vr\vu)=0$, even in the weak sense)
\begin{multline}\label{mom_aprox}
\intOB{\frac{1}{2}\vr\vu\cdot\Grad\vu\cdot\vc{w}-\frac{1}{2}\vr\left(\vu\otimes\vu\right):\Grad\vc{w}+\tn{S}_{\eta}:\Grad\vc{w}}\\-\intO{(\pi+\delta\vr^{\beta}+\delta\vr^{2})\Div\vc{w}}=\intO{\vr \vc{f}\cdot\vc{w}}
\end{multline}
is satisfied for each test function $\vc{w}\in X_{N}$, where $\vu \in X_N$,
$X_N={\rm span}\{\vc{w}_i\}_{i=1}^N$, and $\beta>0$ is large enough\\ 
$\bullet$ the approximate species mass balance equations 
\eq{\label{spec_aprox}
\begin{array}{c}
\Div \vc{J}_k=\vw_{k}+\ep \Ov{\vr}_k-\ep Y_k\vr-\Div(Y_k\vr\vu )+\ep\Div(Y_k\Grad\vr) -\sqrt{\lambda} \log Y_k , \\
\vc{J}_k \cdot \vc{n}\bo = 0 
\end{array}
}
are satisfied pointwisely,
where $\sumkN \bar \vr_k=\bar \vr$, for example we take $\bar \vr_k=\frac{\bar \vr}{n}$
\\ 
$\bullet$ the approximate internal energy balance
\eq{\label{en_aprox}
-\Div\left(\kappa_{\delta,\eta}{\frac{\ep+\vt}{\vt}}\Grad \vt\right)
= &-\Div(\vr e\vu)-\pi\Div\vu +{\frac{\delta}{\vt}} 
+\tn{S}_{\eta}:\Grad{\vu} \\
&+\delta\ep(\beta\vr^{\beta-2}+2)|\Grad\vr|^{2}-\Div\left(\vt\sumkN c_{vk} \vc{J}_k\right)
}
with the boundary condition
\begin{equation}\label{apbound}
\kappa_{\delta,\eta} \frac{\ep+\vt}{\vt}\Grad\vt\cdot\vc{n}\bo+(L+\delta\vt^{B-1})(\vt-\vt_{0}^\eta)+\ep \log\vt +\lambda \vt^{\frac B2} \log \vt=0
\end{equation}
is satisfied pointwisely, where $\vt_0^\eta$ is a smooth, strictly positive approximation of $\vt_0$
and $\kappa_{\delta,\eta}$ is as above.

It remains to formulate the approximate entropy inequality for the purpose of showing existence
of variational entropy solutions. Note that the entropy inequality (or rather equality on this level of approximation) is not an additional assumption, but a consequence of the approximate relations above.
 
\begin{rmk}\label{sqrt}
Note that there is one more change with respect to paper \cite{GPZ}, namely we have in \eqref{spec_aprox} in the last term on the right-hand side $\sqrt{\lambda}$ instead of $\lambda$. This is connected with the limit passage $\lambda\to 0^+$ in the weak formulation of the entropy inequality. It is an easy matter to check that the proof in \cite{GPZ} would work also for this approximation.
\end{rmk}

\subsection{Approximate entropy inequality}

We now deduce the form of the approximate entropy inequality. Even though the computations below are rather formal (and require certain regularity of all functions), it can be verified that the regularity enjoyed by the approximate solutions is enough for the entropy equality to hold.
  
Recalling the form of internal energy and pressure we observe that 
$$
\Div (\vr e \vu)+\pi \Div \vu = \vr \vu \cdot \nabla \Big(\frac{\vr^{\gamma-1}}{\gamma-1}
+\vt \sumkN c_{vk}Y_k \Big) + e \Div(\vr\vu) + (\vr^{\gamma-1}+\vt)\vr \Div \vu 
$$$$
=\vr \vu \cdot \nabla \Big(\vt\sumkN c_{vk}Y_k\Big)-\vt \vu \cdot \nabla \vr+(\vr^{\gamma-1}+\vt+e)\Div (\vr \vu).
$$
Therefore, multiplying the approximate internal energy balance \eqref{en_aprox} by $\frac{\psi}{\vt}$ 
and integrating over $\Omega$ we get 
\begin{multline} \label{s1}
\int_{\Omega} \frac{\kappa_{\delta,\eta}(\varepsilon+\vt)\nabla \vt \cdot \nabla \psi}{\vt^2} \dx
-\int_{\Omega} \kappa_{\delta,\eta}\frac{(\varepsilon+\vt)}{\vt}\frac{|\nabla \vt|^2}{\vt^2}\psi \dx
\\
+ \int_{\partial \Omega} \frac{\psi}{\vt}\Big[ (L+\delta\vt^{B-1}) (\vt-\vt_0^\eta)
+\varepsilon \log \vt + \lambda \vt^{B/2} \log \vt\Big] \dS\\
\underbrace{-\int \sumkN h_k {\vc J}_k \cdot \nabla \Big(\frac{\psi}{\vt}\Big) \dx}_{B1}
\underbrace{+\int \frac{\psi}{\vt}\Big[\vr \vu \cdot \big(\nabla \vt \sumkN c_{vk}Y_k + \vt \sumkN c_{vk}\nabla Y_k\big) - \vt \vu \cdot \nabla \vr\Big]\dx}_{D} \\
+\int_{\Omega}\ep(\Delta \vr+\bar \vr-\vr)(\vr^{\gamma-1}+e+\theta)\frac{\psi}{\vt}\dx
-\int_{\Omega}\frac{\delta \psi}{\vt^2}\dx
-\int_{\Omega}\frac{\psi \tn{S}_{\eta}:\nabla \vu}{\vt}\dx\\
-\int_{\Omega}\vt\Big[\sumkN(\ep(\vr+1) Y_k+\lambda)\frac{\nabla Y_k}{Y_k}\Big]\cdot \nabla\Big(\frac{\psi}{\vt}\Big)\dx
-\int_{\Omega} \frac{\delta \varepsilon (\beta \vr^{\beta-2}+2)|\nabla \vr|^2\psi}{\vt}\dx=0.
\end{multline}
Taking the sum over $k$ of the approximate species equations \eqref{spec_aprox} 
multiplied by $-\frac{g_k^\lambda \psi}{\vt}$ we get 
\begin{multline} \label{s2}
\underbrace{\int_{\Omega} \psi \sumkN \Big(Y_k \vr \vu \cdot \nabla \Big(\frac{g_k^\lambda}{\vt}\Big)\Big)\dx 
+ \int_{\Omega} \sumkN \Big(\frac{g_k^\lambda}{\vt} Y_k\Big) \vr \vu \cdot  \nabla \psi\dx}_{C} 
\underbrace{+ \int_{\Omega} \sumkN {\vc J}_k \cdot \nabla \Big(\frac{g_k^\lambda \psi}{\vt}\Big)\dx}_{B2}\\
-\varepsilon \int_{\Omega} \sumkN Y_k \nabla \vr \cdot \nabla \Big(\frac{g_k \psi}{\vt}\Big) \dx
-\sqrt{\lambda} \int_{\Omega} \frac{\psi}{\vt}\sumkN \log Y_k g_k^\lambda \dx +
\varepsilon \int_{\Omega} \sumkN (\bar \vr_k - Y_k \vr) \frac{g_k^\lambda \psi}{\vt}\dx\\
= - \int_{\Omega} \sumkN \frac{g_k^\lambda \omega_k \psi}{\vt}\dx.
\end{multline}
The definition of $g_k^\lambda$ yields 
$$
\begin{aligned}
B1+B2 =& -\int_{\Omega} \Big(\sumkN c_{pk}{\vc J}_k\Big)\cdot \nabla \psi \dx
+ \int_{\Omega}\psi\Big(\sumkN c_{pk}{\vc J}_k\Big)\cdot \nabla \log \vt\dx \\
&+\int_{\Omega}\frac{\nabla \psi}{\vt} \cdot \Big(\sumkN{\vc J}_k g_k^\lambda\Big)\dx 
+\int_{\Omega}\psi\sumkN {\vc J}_k\cdot\nabla \Big(\frac{g_k^\lambda}{\vt}\Big)\dx\\
=& - \int_{\Omega} \Big(\sumkN {\vc J}_k s_k^\lambda\Big) \nabla \psi \dx + \int_{\Omega} \Big(\sumkN c_{pk} {\vc J}_k\Big) \cdot \nabla (\log \vt)  \psi \dx \\
&+ \int_{\Omega} \psi \sumkN {\vc J}_k \cdot \nabla \Big(\frac{g_k^\lambda}{\vt}\Big) \dx,
\end{aligned}
$$
and 
$$
C = \int_{\Omega} \psi \vr \vu \cdot \Big(\sumkN Y_k  \nabla \Big(\frac{g_k^\lambda}{\vt}\Big)\Big) \dx
- \int_{\Omega} \psi \Div \Big( \vr\vu \sumkN Y_k (c_{pk}-s_k^\lambda) \Big)\dx.
$$
Rewriting the second term by virtue of the approximate continuity equation
$$
\begin{aligned}
C &= \int_{\Omega} \psi \vr \vu \cdot \Big(\sumkN Y_k \nabla \Big(\frac{g_k^\lambda}{\vt}\Big)\Big)\dx 
- \int_{\Omega} \psi \vr \vu \cdot \sumkN c_{pk} \nabla Y_k \dx
\\
+& \int_{\Omega} \psi \vr \vu \cdot \nabla\Big(\sumkN Y_k s_k^\lambda\Big)\dx
- \int_{\Omega} \psi\Div(\vr \vu) \sumkN  Y_k (c_{pk}-s_k^\lambda) \dx.  
\end{aligned}
$$
Finally we have 
$$
D = \int_{\Omega} \psi \vr \vu \cdot \Big(\sumkN c_{vk} \nabla Y_k\Big) \dx
+ \int_{\Omega} \psi \vr \vu\cdot  \Big(\sumkN \nabla(c_{vk}\log \vt) Y_k\Big)  \dx 
- \int_{\Omega} \psi \vr \vu \cdot \nabla (\log \vr) \dx. 
$$
Substituting $c_{vk} \log \vt = s_k^{\lambda}+\log (\vr+\sqrt{\lambda})+\log Y_k$ to the second term yields 
\begin{multline} \label{cd}
C+D = -\int_{\Omega}\psi\vr\vu\cdot \Big(\sumkN  \nabla s_k^\lambda Y_k\Big)\dx 
-\int_{\Omega}\psi\vr\vu\cdot\Big(\sumkN \nabla Y_k c_{pk}\Big)\dx
+\int_{\Omega}\psi\vr\vu \cdot \nabla s^\lambda \dx\\
+\int_{\Omega}\psi\vr\vu\cdot\Big(\sumkN \nabla Y_k c_{vk}\Big)\dx
+\int_{\Omega}\psi\vr\vu\cdot \Big(\sumkN \nabla s_k^\lambda Y_k\Big)\dx \\
+\int_{\Omega}\psi\vr\vu\cdot \big(\Big(\sumkN Y_k\Big)\nabla \log (\vr+\sqrt{\lambda})-\nabla \log \vr\big)\dx\\
+\int_{\Omega}\psi\vr\vu\cdot \Big(\sumkN \nabla Y_k\Big)\dx
- \int_{\Omega} \sumkN \psi Y_k (c_{pk}-s_k^\lambda) \Div(\vr \vu)\dx\\
=\int_{\Omega} \psi \vr \vu \cdot \nabla s^\lambda \dx 
+ \int_{\Omega}  \psi \vr \vu \cdot \Big(\Big(\sumkN Y_k\Big)\nabla \log (\vr+\sqrt{\lambda}) - \nabla \log \vr\Big)\dx\\
- \int_{\Omega} \sumkN \psi Y_k c_{pk} \Div(\vr \vu)\dx
+  \int_{\Omega} \psi \sumkN (Y_k s_k^\lambda) \Div(\vr \vu)\dx.
\end{multline}
Integrating in the first term by parts we get 
$$
\int_{\Omega} \psi \vr \vu \cdot \nabla s^\lambda \dx  = 
-\int_{\Omega} \psi s^\lambda \Div(\vr \vu)\dx - \int_{\Omega}s^\lambda \vr \vu \cdot \nabla \psi \dx. 
$$
The first term cancels with the last term from \eqref{cd} and applying the approximate 
continuity equation yields
\begin{equation} \label{cd1}
\begin{aligned}
C+D =& -\int_{\Omega} s^\lambda \vr \vu \cdot \nabla \psi \dx
+ \int_{\Omega}  \psi \vr \vu \cdot \Big[ \Big(\sumkN Y_k\Big)\nabla \log (\vr+\sqrt{\lambda}) \\
&- \nabla \log \vr\Big]\dx
- \varepsilon \int_{\Omega} \psi \sumkN Y_k c_{pk} (\Delta \vr + \bar \vr - \vr) \dx. 
\end{aligned}
\end{equation}  
With the above considerations we are ready to formulate the approximate entropy inequality 
which at this stage can be still written as equality. Namely, adding \eqref{s1} and \eqref{s2}
we arrive at 
\begin{multline} \label{entropy_ap_0} 
\int \frac{\kappa_{\delta,\eta}(\varepsilon+\vt)\nabla \vt \cdot \nabla \psi}{\vt^2} \dx
-\int \kappa_{\delta,\eta}\frac{(\varepsilon+\vt)}{\vt}\frac{|\nabla \vt|^2}{\vt^2}\psi \dx
\\
+ \int_{\partial \Omega} \frac{\psi}{\vt}\Big[ (L+\delta\vt^{B-1}) (\vt-\vt_0^\eta)
+\varepsilon \log \vt + \lambda \vt^{B/2}\log \vt\Big]\, \dS \\
-\int_{\Omega}\frac{\delta \psi}{\vt^2}\dx
-\int_{\Omega} \frac{\psi \tn{S}_\eta : \nabla \vu}{\vt}\dx 
+\int_{\Omega}\ep(\Delta \vr+\bar \vr-\vr)(\vr^{\gamma-1}+e+\theta)\frac{\psi}{\vt}\dx \\
+ \int_{\Omega}\sumkN \frac{g_k^\lambda \omega_k \psi}{\vt}\dx 
-\int_{\Omega} \vr s^\lambda \vu \cdot \nabla \psi \dx\\
\underbrace{
- \int_{\Omega} \Big(\sumkN {\vc J}_k s_k^\lambda\Big)\cdot \nabla \psi \dx 
+ \int_{\Omega} \psi \nabla (\log \vt) \cdot \Big(\sumkN c_{pk} {\vc J}_k\Big) \dx
+ \int_{\Omega} \psi \sumkN {\vc J}_k \cdot \nabla \Big(\frac{g_k^\lambda}{\vt}\Big) \dx}_{J}\\
+ \int_{\Omega}  \psi \vr \vu\cdot  \Big(\Big(\sumkN Y_k\Big)\nabla \log (\vr+\sqrt{\lambda}) - \nabla \log \vr\Big)\dx
- \varepsilon \int_{\Omega} \psi \sumkN Y_k c_{pk} (\Delta \vr + \bar \vr - \vr) \dx\\
-\varepsilon \int_{\Omega} \sumkN Y_k \nabla \vr \cdot \nabla \Big(\frac{g_k \psi}{\vt}\Big) \dx
-\sqrt{\lambda} \int_{\Omega} \frac{ \psi}{\vt} \sumkN \log Y_k g_k \dx 
+\varepsilon \int_{\Omega} \sumkN (\bar \vr_k - Y_k \vr) \frac{g_k \psi}{\vt}\dx\\ 
-\int_{\Omega}\vt\Big(\sumkN(\ep(\vr+1) Y_k+\lambda)\frac{\nabla Y_k}{Y_k}\Big)\cdot \nabla\Big(\frac{\psi}{\vt}\Big)\dx 
-\int_{\Omega}\frac{\delta \varepsilon (\beta \vr^{\beta-2}+2)|\nabla \vr|^2\psi}{\vt}\dx=0.
\end{multline}
Taking into account \eqref{def_jk}, the sum of terms containing ${\vc J}_k$ equals (we use the notation $\widehat{\vc{F}}_k$ $= -\sumkN Y_k \widehat{D}_{kl} (\vt,\vec{Y}) \nabla Y_l$) 
\begin{multline}
J = - \int_{\Omega} \Big(\sumkN \widehat{\vc F}_k s_k^\lambda\Big)\cdot \nabla \psi \dx 
+ \int_{\Omega} \psi \nabla (\log \vt) \cdot \Big(\sumkN c_{pk} \widehat{\vc F}_k\Big) \dx
+ \int_{\Omega} \psi \sumkN \widehat{\vc F}_k \cdot \nabla \Big(\frac{g_k^\lambda}{\vt}\Big) \dx\\
\underbrace{-\int_{\Omega}\psi\sumkN c_{pk}\big(\ep(\vr+1)Y_k+\lambda\big)\frac{\nabla Y_k}{Y_k}\cdot \nabla\log\vt\dx
+\int_{\Omega}\psi\sumkN(\ep(\vr+1)Y_k+\lambda)\frac{\nabla Y_k}{Y_k}c_{vk}\nabla\log\vt\dx}_{J_1}\\
+\int_{\Omega}\sumkN\big(\ep(\vr+1)Y_k+\lambda\big)s_k^\lambda \frac{\nabla Y_k}{Y_k}\cdot\nabla\psi\dx
-\int_{\Omega}\psi\sumkN\big(\ep(\vr+1)Y_k+\lambda\big)\frac{\nabla Y_k}{Y_k}\cdot\nabla\log(\vr+\sqrt{\lambda})\dx\\
-\int_{\Omega}\psi\sumkN(\ep(\vr+1)Y_k+\lambda)\Big|\frac{\nabla Y_k}{Y_k}\Big|^2\dx.
\end{multline}
Recalling \eqref{cpcv} we have
\begin{equation}
J_1=-\int_{\Omega}\psi\sumkN(\ep(\vr+1)Y_k+\lambda)\frac{\nabla Y_k}{Y_k}\cdot\nabla\log\vt\dx.
\end{equation}
The second last term in \eqref{entropy_ap_0} reads
$$
-\int_{\Omega}\sumkN (\ep(\vr+1)Y_k+\lambda)\frac{\nabla Y_k}{Y_k}\cdot\nabla\psi\dx
+\int_{\Omega}\psi\sumkN (\ep(\vr+1)Y_k+\lambda)\frac{\nabla Y_k}{Y_k}\cdot\nabla\log\vt\dx.
$$
Now, the second term above cancels with $J_1$.

For the purpose of the passage to the limit it is better to rewrite the above formulation in 
the following way, using the fact that $\sumkN \widehat{\vc{F}}_k = \mathbf{0}$ and $\sumkN \omega_k =0$
\begin{multline} \label{entropy_aprox}
\int_{\Omega} \frac{\psi \tn {S}_\eta : \nabla \vu}{\vt}\dx
+\int_\Omega \kappa_{\delta,\eta}\frac{(\varepsilon+\vt)}{\vt}\frac{|\nabla \vt|^2}{\vt^2}\psi \dx
-\int_{\Omega}\omega_k (c_{pk}-c_{vk} \log \vt + \log Y_k)\psi\dx
\\
+\int_{\Omega}\frac{\delta \psi}{\vt^2}\dx-\int_{\Omega} \psi \sumkN \widehat{\vc F}_k \cdot \nabla \log Y_k \dx
+ \int_{\partial \Omega} \frac{\psi}{\vt}(L+\delta \vt^{B-1})\vt_0^\eta \, \dS\\ 
+\int_{\Omega}\frac{\delta \varepsilon (\beta \vr^{\beta-2}+2)|\nabla \vr|^2\psi}{\vt}\dx
+\int_{\Omega}\psi\sumkN(\ep(\vr+1)Y_k+\lambda)\Big|\frac{\nabla Y_k}{Y_k}\Big|^2\dx \\
=\int_{\Omega} \frac{\kappa_{\delta,\eta}(\varepsilon+\vt)\nabla \vt \cdot \nabla \psi}{\vt^2} \dx
-\int_{\Omega} \vr s^\lambda \vu \cdot \nabla \psi \dx
- \int_{\Omega} \sumkN (c_{vk}\log \vt -\log Y_k)  \widehat{\vc F}_k \cdot \nabla \psi \dx\\ 
+ \int_{\Omega} \psi \vr \vu \cdot \Big(\big(\sumkN Y_k\big) \nabla \log (\vr+\sqrt{\lambda}) - \nabla \log \vr\Big)\dx 
- \varepsilon \int_{\Omega} \psi \sumkN Y_k c_{pk} (\Delta \vr + \bar\vr - \vr) \dx \\
+ \int_{\partial \Omega} \frac{\psi}{\vt}\big( (L+\delta\vt^{B-1})\vt + \varepsilon \log \vt + \lambda \vt^{B/2}\log \vt\big)\, \dS
-\varepsilon \int_{\Omega} \sumkN Y_k \nabla \vr \cdot \nabla \Big(\frac{g_k^\lambda \psi}{\vt}\Big) \dx\\
-\sqrt{\lambda} \int_{\Omega} \Big(\sumkN g_k^\lambda  \log Y_k\Big) \frac{\psi}{\vt}\dx 
+\int_{\Omega}\ep(\Delta \vr+\bar \vr-\vr)(\vr^{\gamma-1}+e+\theta)\frac{\psi}{\vt}\dx\\
+\varepsilon \int_{\Omega} \sumkN (\bar \vr_k - Y_k \vr) \frac{g_k^\lambda \psi}{\vt}\dx 
-\int_{\Omega}\sumkN (\ep(\vr+1)Y_k+\lambda)\frac{\nabla Y_k}{Y_k}\cdot\nabla\psi\dx\\
+\int_{\Omega}\sumkN\big(\ep(\vr+1)Y_k+\lambda\big)s_k^\lambda\frac{\nabla Y_k}{Y_k}\cdot \nabla\psi\dx
-\int_{\Omega}\psi\sumkN(\ep(\vr+1)Y_k+\lambda)\frac{\nabla Y_k}{Y_k}\cdot \nabla\log(\vr+\sqrt{\lambda})\dx.
\end{multline}
Letting formally $\eta\to 0^+$, $\lambda\to 0^+$,  $\varepsilon\to 0^+$ and $\delta\to 0^+$,  we obtain \eqref{entropy_ineq} with equality. 
However, in rigorous limit passages 
we will have to apply the weak lower semicontinuity of norms leading to inequality instead of the equality.

\subsection{Existence of solutions for the Galerkin approximation.}
The existence of a solution can be proved exactly as in \cite[Theorem 5.2]{GPZ}.  The proof is based on the following ideas:
\begin{itemize}
\item the existence is proved by means of a version of the Schauder fixed point theorem for a suitably defined operator
\item instead of the temperature $\vt$ and the mass fractions $Y_k$ we look for their logarithms to ensure their positiveness
\item the a priori estimates are deduced from the entropy inequality \eqref{entropy_aprox} with $\psi \equiv 1$, the ``total" energy balance integrated over $\Omega$ (i.e. \eqref{mom_aprox} with $\vc{w}=\vu$ and the internal energy balance \eqref{en_aprox} integrated over $\Omega$), the approximate continuity equation \eqref{cont_aprox} and the Galerkin approximation of the momentum balance \eqref{mom_aprox} with $\vc{w}=\vu$     
\end{itemize}
We can verify the following result
\begin{thm}\label{T3}
Let $\delta$, $\ep$, $\lambda$ and $\eta$ be positive numbers and $N$ a positive integer. 
Let $\Omega \in C^2$. 
Then there exists a solution to system (\ref{cont_aprox}--\ref{en_aprox}) such that 
$\vr\in W^{2,q}(\Omega)$ $\forall q<\infty$, $\vr\geq0$ in $\Omega$, $\intO{\vr}=M$, $\vu\in X_N$, $\vec{Y}\in W^{1,2}(\Omega)$ with $\log Y_k \in W^{2,q}(\Omega)$ $\forall q<\infty$, $Y_k>0$ a.e. in $\Omega$ and $\vt\in W^{2,q}(\Omega)$ $\forall q<\infty$, $\vt\geq C(N)>0$.
Moreover, this solution satisfies the entropy equation \eqref{entropy_aprox} and the following 
estimate
\begin{multline} \label{est_1}
\sqrt{\lambda}\sumkN \Big(\|Y_k\|_{1,2}+\Big\|\frac{\nabla Y_k}{Y_k}\Big\|_2+\lambda^{-1/4}\|\log Y_k\|_2\Big) + \sumkN \Big\|\frac{|\nabla Y_k|^2}{Y_k} \Big\|_1 
+\|\nabla \vt^{B/2}\|_2 + \Big\|\frac{\nabla \vt}{\vt^{2}}\Big\|_2 \\
+ \Big\|\frac{\nabla \vr}{\sqrt{\vr+\sqrt{\lambda}}}\Big\|_2
+\|\vt^{-2}\|_1+\|\vt\|_{B,\partial \Omega}+\Big\|\frac{\log \vt}{\vt}\Big\|_{1,\partial \Omega} 
+\|\nabla^2 \vr\|_2 + \|\vu\|_{1,2} + \|\nabla \vr\|_6 \leq C,  
\end{multline}
where $C$ is independent of $N$. 
\end{thm}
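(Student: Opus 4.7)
I would follow the Schauder fixed point argument of \cite[Theorem 5.2]{GPZ}, adapted to the present scheme with the $\sqrt{\lambda}$-modification from Remark \ref{sqrt} and the modified fractional entropies \eqref{skl}. Define a fixed point operator $\mathcal{T}$ on a bounded set $\mathcal{B} \subset X_N \times C(\overline{\Omega}) \times C(\overline{\Omega})^n$, mapping data $(\vu^\star, Z^\star, \vec{Z}^\star)$ (with $Z^\star = \log \vt^\star$, $Z_k^\star = \log Y_k^\star$) to $(\vu, Z, \vec{Z})$ as follows. From the starred data I first solve the elliptic regularization \eqref{cont_aprox} for $\vr$; the maximum principle and the source $\varepsilon \bar{\vr}$ give $\vr \geq 0$ and $\intO{\vr}=M$, and standard elliptic theory delivers $\vr \in W^{2,q}(\Omega)$ for every $q<\infty$. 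Next, \eqref{spec_aprox} rewritten as a scalar equation for $Z_k$ is strongly elliptic and coercive (thanks to the $\lambda\,\nabla Y_k/Y_k$ term together with the $\sqrt{\lambda}\,\log Y_k$ penalization), giving $Z_k \in W^{2,q}(\Omega)$ and $Y_k>0$. Similarly, \eqref{en_aprox} written for $Z=\log \vt$ is coercive thanks to the $\delta \vt^B+\delta/\vt$ contributions in $\kappa_{\delta,\eta}$ and the strictly positive boundary data $\vt_0^\eta$, producing $\vt \in W^{2,q}(\Omega)$ with a strictly positive lower bound depending on $N$. Finally, I solve the Galerkin problem \eqref{mom_aprox} in $X_N$ by Brouwer's theorem, using \eqref{cont_aprox} to rewrite the convective term in skew-symmetric form and obtain coercivity. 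Each step is continuous in the appropriate norms and the embedding $W^{2,q}\hookrightarrow C^1$ yields compactness of $\mathcal{T}$, so Schauder supplies a fixed point.

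For the bounds \eqref{est_1}, the key tools are the approximate entropy equality \eqref{entropy_aprox} with $\psi\equiv 1$ and the global \textquotedblleft total\textquotedblright\ energy balance obtained by testing \eqref{mom_aprox} with $\vc{w}=\vu$ and adding the result to \eqref{en_aprox} integrated over $\Omega$. The latter controls the boundary heat flux and $\intO{\vr \vc{f} \cdot \vu}$ in terms of data. The former, on its left-hand side, produces the non-negative quantities
\begin{equation*}
\int_\Omega \kappa_{\delta,\eta}\frac{(\varepsilon+\vt)}{\vt}\frac{|\nabla \vt|^2}{\vt^2},\quad \int_\Omega \frac{\tn{S}_\eta:\nabla \vu}{\vt},\quad \int_\Omega \frac{\delta}{\vt^2},\quad \int_\Omega \sumkN \bigl(\ep(\vr+1)Y_k+\lambda\bigr)\Big|\frac{\nabla Y_k}{Y_k}\Big|^2,
\end{equation*}
together with the boundary contributions that yield $\|\vt\|_{B,\partial\Omega}$ and $\|(\log\vt)/\vt\|_{1,\partial\Omega}$, and the reaction term $-\int_\Omega \sumkN \omega_k(c_{pk}-c_{vk}\log\vt+\log Y_k)$ which is non-negative by \eqref{g_omega_pos} and \eqref{omega_small}. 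Korn's inequality and the lower bound on $\mu_\eta$ give $\|\vu\|_{1,2}$; splitting $\kappa_{\delta,\eta} \geq \delta\vt^B+\delta/\vt$ delivers $\|\nabla \vt^{B/2}\|_2$ and $\|\nabla \vt/\vt^2\|_2$; the $\sqrt{\lambda}\log Y_k$ source in \eqref{spec_aprox} tested against $\log Y_k$ provides $\sqrt{\lambda}\|\log Y_k\|_2^2 \leq C$, whence $\lambda^{1/4}\|\log Y_k\|_2 \leq C$; the species coercivity above furnishes $\sqrt{\lambda}\|\nabla Y_k/Y_k\|_2 \leq C$ and $\sum\|\,|\nabla Y_k|^2/Y_k\|_1 \leq C$. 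The remaining $\|\nabla^2\vr\|_2$, $\|\nabla\vr\|_6$ and $\|\nabla\vr/\sqrt{\vr+\sqrt{\lambda}}\|_2$ estimates come from elliptic regularity for \eqref{cont_aprox} once $\vu \in W^{1,2}$, combined with multiplying \eqref{cont_aprox} by $1/(\vr+\sqrt{\lambda})$-type test functions.

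The main obstacle is bookkeeping \emph{$N$-independence}: the Galerkin velocity enters as a coefficient in the elliptic problems for $\vr$, $\vt$ and $Y_k$, and the chain of estimates above must close uniformly in $N$. As in \cite{GPZ}, this is arranged by running the fixed point on a ball whose radius is dictated by the a priori bounds themselves, so that any solution of $\mathcal{T}(v)=v$ automatically satisfies \eqref{est_1}; no new parameter interaction arises from the $\sqrt{\lambda}$ modification, which only tightens the $\log Y_k$ control. A secondary point is the rigorous derivation of the approximate entropy equality \eqref{entropy_aprox} out of \eqref{en_aprox} and \eqref{spec_aprox}: the formal computation is exactly the one carried out in the previous subsection, and the $W^{2,q}$ regularity of $\vr$, $\vt$, $\log Y_k$ together with $\vu\in X_N$ and the strict positive lower bounds on $\vt$, $Y_k$ is enough to legitimize every integration by parts, in particular the multiplication of \eqref{en_aprox} by $\psi/\vt$ and of \eqref{spec_aprox} by $-g_k^\lambda\psi/\vt$.
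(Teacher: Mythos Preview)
Your proposal is correct and follows the same route the paper sketches: Schauder fixed point on $X_N \times C(\overline\Omega)\times C(\overline\Omega)^n$ with $(\log\vt,\log Y_k)$ as unknowns, and $N$-independent a priori bounds read off from the approximate entropy equality \eqref{entropy_aprox} with $\psi\equiv 1$ combined with the global total energy balance. The only minor discrepancy is in bookkeeping two of the estimates: the paper obtains the $\lambda^{1/4}\|\log Y_k\|_2$ bound from the term $-\sqrt{\lambda}\int_\Omega \sum_k g_k^\lambda \log Y_k \,\frac{\psi}{\vt}\dx$ on the right-hand side of \eqref{entropy_aprox} (which, expanded, contains $+\sqrt{\lambda}\sum_k\|\log Y_k\|_2^2$), and the $\|\nabla\vr/\sqrt{\vr+\sqrt\lambda}\|_2$ bound from the seventh term $-\ep\int_\Omega \sum_k Y_k\nabla\vr\cdot\nabla(g_k^\lambda\psi/\vt)\dx$ there, rather than by testing \eqref{spec_aprox} and \eqref{cont_aprox} separately as you suggest. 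Since the entropy equality is precisely built from those tests, this is the same mechanism and your variant also works.
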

Note that the bound on $\log Y_k$ in $L^2$ appears in \eqref{est_1} due to the presence of the term $-\sqrt{\lambda} (\int_\Omega \sumkN g_k^\lambda \log Y_k) \frac{1}{\vt} \dx$ on the right-hand side of \eqref{entropy_aprox}. The term $\frac{|\nabla \vr|^2}{\vr+\sqrt{\lambda}}$ appears due to the 7th term on the right-hand side of \eqref{entropy_aprox}.
 
\begin{rmk}
In the entropy inequality particular attention should be paid to terms containing logarithms,
since at the level of approximation we should avoid infinities in the entropy formulation.
We overcome this difficulty constructing the approximate temperature and $Y_k$ as 
exponential functions and possible singularities in $\log \vr$ are avoided due to definition 
of $s_k^\lambda$ \eqref{skl}. Thus we know that all the quantities in the approximate entropy 
equation \eqref{entropy_aprox} are finite. However, we must control that these terms remain finite throughout all passages below.
\end{rmk} 

\section{Limit passages I}

In this section we will study the limit passages $N \to \infty$, $\eta\to 0^+$, $\lambda \to 0^+$ and $\varepsilon \to 0^+$. Most of the arguments will be similar to \cite{GPZ} and the references therein, therefore we will mostly skip them and we will concentrate mostly on the new aspect, i.e. the entropy (in)equality which must hold (possibly modified) after each limit passage.
 
\subsection{Limit passages $N \to \infty$ and $\eta \to 0$}
We start with $N \to \infty$. 
At this stage the estimates copy exactly \cite{GPZ}, hence we may follow the arguments 
there. Note that, except the quadratic term in $\nabla \vu_N$ on the right-hand side (rhs) 
of the internal energy balance \eqref{en_aprox}, the limit passages are easy to perform. 
To get also the convergence of this term we use the fact that due to the $\eta$-approximation 
of the stress tensor we may use as test function $\vu$ in the limit version of the 
momentum equation \eqref{mom_aprox} 
and get
$$
\lim_{N\to \infty} \int_\Omega \tn{S}_\eta (\vt_N,\nabla \vu_N):\nabla\vu_N \dx = \int_\Omega \tn{S}_\eta (\vt,\nabla \vu):\nabla\vu \dx 
$$  
due to the energy equality. This equality even implies that $\nabla \vu_N\to \nabla \vu$ strongly in $L^2(\Omega)$, however, we do not use this information here.

Next we deal with the entropy inequality. 
In the first two terms in \eqref{entropy_aprox} we use the weak lower semicontinuity of $L^2$ norm with respect to weak convergence in $L^2$ 
(see \cite{NoPo1} for details). We have to restrict ourselves to non-negative test functions $\psi$ and get  
\begin{equation} \label{weak_low1}
\lim_{N \to \infty} \int_{\Omega} \frac{\tn{S}_\eta^N:\nabla \vu_N}{\vt_N}\psi\dx \geq
\int_{\Omega} \frac{\tn{S}_\eta:\nabla \vu}{\vt}\psi\dx 
\end{equation}
and 
\begin{equation} \label{weak_low2}
\lim_{N \to \infty} \int_{\Omega} \kappa_{\delta,\eta} \frac{(\varepsilon+\vt_N)|\nabla \vt_N|^2}{\vt_N^3}\psi\dx \geq
\int_{\Omega} \kappa_{\delta,\eta} \frac{(\varepsilon+\vt)|\nabla \vt|^2}{\vt^3}\psi\dx.
\end{equation}
In the other terms we can pass to the limit due to estimates \eqref{est_1}, however, we comment some of the limits in more details. 
Notice that in the 8th term on the rhs  the part with $\log \vr$ does not cause any troubles due to the control of $\log Y_k$ in $L^6$. However, in the subsequent limit passages, we will have to use another argument here. Similarly we may treat all other terms containing $\log Y_k$. The terms containing $\log \vr$ are either multiplied by $\vr$, or they are in fact in the form $\log (\vr+\sqrt{\lambda})$ and cause no troubles at this moment. Therefore the entropy inequality (we loose equality here) of the form \eqref{entropy_aprox} holds true. Note only that the test functions $\psi$ must be non-negative and we have inequality ($\leq$) instead of the equality sign in \eqref{entropy_aprox}.

The next step is the passage $\eta \to 0^+$. Since we have no information to ensure the strong convergence of the quadratic term on the rhs of the internal energy balance \eqref{en_aprox}, we have to replace it by the total energy inequality. To this aim, we sum \eqref{en_aprox} with the kinetic energy balance, i.e. \eqref{mom_aprox} with the test function $\vc{w}=\vu \psi$ (this was not possible on the level of Galerkin approximation), and we obtain
\begin{multline} \label{tot_en_aprox}
-\intO{\Big[\vr e+ \frac 12 \vr |\vu|^2 + (\pi+ \delta \vr^\beta + \delta \vr^2)\Big]\vu\cdot \Grad \psi}  
\\
-\intO{\Big(\tn{S}_\eta \vu \cdot \Grad \psi + \delta \vt^{-1} \psi\Big)} 
+\intO{\kappa_{\delta,\eta}{\frac{\ep+\vt}{\vt}}\Grad\vt\cdot\Grad \psi} \\
+ \int_{\partial \Omega}\big[(L+\delta \vt^{B-1})(\vt-\vt_0^\eta) +\ep \log \vt + \lambda \vt^{\frac B2} \log\vt\big] \psi \, \dS\\ 
+ \sum_{k=1}^n c_{vk} \intO{\Big[\vt \sum_{l=1}^n Y_k\widehat{D}_{kl}\Grad Y_l \cdot \Grad \psi + \vt (\ep(\vr+1)Y_k +\lambda) \frac{\Grad Y_k}{Y_k}\cdot \Grad \psi\Big]} \\
 = \intO{\vr \vc{f} \cdot \vu \psi} 
+  \frac{\delta}{\beta-1} \intO{(\ep \beta \Ov{\vr} \vr^{\beta-1}\psi + \vr^\beta \vu \cdot \Grad \psi - \ep \beta \vr^\beta \psi)} \\
+\delta \intO{(2\ep  \Ov{\vr} \vr\psi + \vr^2 \vu \cdot \Grad \psi - 2\ep  \vr^2 \psi)} 
\end{multline}
for all $\psi \in C^\infty(\Ov{\Omega})$.
Now it is easy to pass to the limit in \eqref{tot_en_aprox}, similarly as in \cite{GPZ}. The limit passage in the other equalities (continuity equation, momentum equation and the species balance) is easy to perform.

On the level of entropy inequality this limit passage does not entail any additional difficulties with respect to the previous limit passage,
since we have all the previous estimates. Therefore we pass to the limit directly and get inequality of the type \eqref{entropy_aprox}, where we have inequality instead of equality and we remove all indices $\eta$.

\subsection{Limit passage $\lambda \to 0$}
Here we still dispose of estimates \eqref{est_1}. Note, however, that the estimate of $\vu$ in $W^{1,2}$ uniformly in $\lambda$ does not follow from the kinetic energy balance (which is not anymore available) but from the entropy inequality. Furthermore, we loose the uniform control of $\log Y_k$ and $Y_k$ in $W^{1,2}$. Nonetheless, see \cite[Formula (6.12)]{GPZ}, we can verify that  
\begin{equation} \label{est_sigma_y}
\|\sumkN \nabla Y_k\|_2 +  \|(\sumkN Y_k)-1\|_6 \leq C(\lambda) \sim \sqrt{\lambda}\to 0 \quad \textrm{for} \quad \lambda \to 0.
\end{equation} 
This bound, together with \eqref{est_1}, implies
\begin{equation} \label{est_yk}
\sumkN \|\nabla Y_k\|_{\frac{12}{7}} \leq C
\end{equation} 
with $C$ independent of $\lambda$. 
The above estimates combined with \eqref{est_1} allow to pass to the limit in the continuity, momentum, species and total energy balances. We have \\
$\bullet$ the approximate continuity equation
\begin{equation} \label{cont_aprox_2}
\varepsilon \vr+\Div (\vr \vu) = \varepsilon\Delta \vr+\ep \Ov{\vr}, \qquad 
\Grad\vr\cdot\vc{n}|_{\partial \Omega}=0
\end{equation}
$\bullet$ the weak formulation of the approximate momentum equation
\begin{equation} \label{mom_aprox_2}
\begin{array}{c}
\displaystyle
\int_\Omega\Big(\frac{1}{2}\vr\vu\cdot\Grad\vu\cdot\vcg{\varphi}-\frac{1}{2}\vr\left(\vu\otimes\vu\right):\Grad\vcg{\varphi}-\tn{S}:\Grad\vcg{\varphi}\Big)\dx\\
\displaystyle 
-\int_\Omega (\pi+\delta\vr^{\beta}+\delta\vr^{2})\Div\vcg{\varphi} \dx
=\int_\Omega\vr \vc{f}\cdot\vcg{\varphi} \dx
\end{array}
\end{equation}
for all $\vcg{\varphi}\in C^\infty_0(\Omega)$\\
$\bullet$ the weak formulation of the approximate species balance equations  
\begin{equation}\label{aprox_spec_2}
\begin{array}{c}
\displaystyle  \int_\Omega \Big(\ep Y_k\vr\psi -Y_{k}\vr\vu\cdot \Grad \psi +\sum_{l=1}^n Y_k\widehat{D}_{kl}\Grad Y_l \cdot \Grad \psi\Big)\dx \\ 
\displaystyle =  \intO{\Big[\omega_{k} \psi-\varepsilon \vr\Grad Y_k \cdot \Grad \psi+\varepsilon\Div(Y_k\Grad\vr)\psi-\varepsilon\Grad Y_k\cdot \Grad \psi+\varepsilon \Ov{\vr}_k\psi\Big]},
\end{array}
\end{equation}
for all $\psi \in C^\infty(\Ov{\Omega})$ ($k=1,2,\dots,n$)\\
$\bullet$ the weak formulation of the approximate total energy equation
\begin{equation} \label{aprox_tot_en_2}
\begin{array}{c}
\displaystyle -\intO{\Big[\vr e+ \frac 12 \vr |\vu|^2 + (\pi + \delta \vr^\beta + \delta \vr^2)\Big]\vu\cdot \Grad \psi} 
\\
\displaystyle -\intO{\Big(\tn{S} \vu \cdot \Grad \psi + \delta \vt^{-1} \psi\Big)} 
+\intO{\kappa_{\delta}{\frac{\varepsilon+\vt}{\vt}}\Grad\vt\cdot\Grad \psi} \\
\displaystyle  + \int_{\partial \Omega}\big[(L+\delta \vt^{B-1})(\vt-\vt_0) +\ep \log \vt\big] \psi \, \dS \\ 
\displaystyle + \intO{\Big[\vt\sum_{k,l=1}^n c_{vk} Y_k  \widehat{D}_{kl}\Grad Y_l \cdot \Grad \psi + \vt \sum_{k=1}^n \ep(\vr+1) c_{vk} \Grad Y_k \cdot \Grad \psi\Big]} \\
\displaystyle  = \intO{\vr \vc{f} \cdot \vu \psi} + \frac{\delta}{\beta-1} \intO{(\varepsilon \beta \Ov{\vr} \vr^{\beta-1}\psi + \vr^\beta \vu \cdot \Grad \psi - \varepsilon \beta \vr^\beta \psi)} \\
\displaystyle+\delta \intO{(2\ep  \Ov{\vr} \vr\psi + \vr^2 \vu \cdot \Grad \psi - 2\ep  \vr^2 \psi)} 
\end{array}
\end{equation}
for all $\psi \in C^\infty(\Ov{\Omega})$

Next we consider the limit passage in the entropy inequality.
The terms on the left-hand side (lhs) can be treated as in the previous limit passage. 
We only have to pay attention to the terms containing $\log \vr$  and $\log Y_k$. In the former, we use
the approximation $s_k^\lambda$. Namely, we have
$$ 
\int_{\Omega} \psi \vr \vu\cdot \Big[\Big(\sumkN Y_k\Big)\nabla \log(\vr+\sqrt{\lambda})-\nabla \log \vr\Big]\dx=
\int_{\Omega} \psi \vu \cdot \nabla \vr \Big[\Big(\sumkN Y_k\Big)\frac{\vr}{\vr+\sqrt{\lambda}}-1\Big]\dx \to 0.
$$
The next term we should look at is the last term on the rhs. 
After passage with $\lambda$ the part with $\varepsilon$ will vanish due to \eqref{est_sigma_y} and \eqref{est_1}.
Thus it is enough to treat the second term which reads 
$$
\int_{\Omega} \psi \frac{\lambda}{\vr+\sqrt{\lambda}}\nabla \vr \cdot \sumkN \frac{\nabla Y_k}{Y_k} 
$$
and tends to $0$ as $\frac{\lambda}{\vr+\sqrt{\lambda}}\to 0$ and the rest is bounded in $L_1$.
To show convergence of the second last term on the rhs, we use the bound of $\frac{\nabla Y_k}{Y_k}$ and $\log Y_k$  in $L^2$ from \eqref{est_1} (and, indeed, also other bounds coming from there). Note that it is exactly here, where we need the $\sqrt{\lambda}$ instead of $\lambda$ in \eqref{spec_aprox} to ensure that 
the $\lambda$ part of this term converges to zero as $\lambda \to 0^+$.
The part with $\varepsilon$ and $\log Y_k$ is also complicated, as we miss any estimate of $\log Y_k$ which does not blow up when $\lambda \to 0^+$. To this reason, we write
\begin{equation} \label{corr_byparts}
\begin{array}{c}
\displaystyle \ep \int_\Omega \sumkN (\vr+1) \log Y_k \nabla Y_k \cdot \nabla \psi \dx = - \ep \int_\Omega \sumkN (\vr+1) \nabla Y_k \cdot \nabla \psi \dx \\
\displaystyle - \ep \int_\Omega \sumkN Y_k \log Y_k \nabla \vr \cdot \nabla \psi\dx 
- \ep \int_\Omega \sumkN (\vr+1) Y_k \log Y_k \Delta \psi \dx \\
\displaystyle  + \ep \int_{\partial \Omega} \sumkN (\vr+1) Y_k \log Y_k \nabla \psi \cdot \vc{n} \, \dS.
\end{array}
\end{equation} 
Now it is easy to let $\lambda \to 0^+$ in all terms in \eqref{corr_byparts}.
The remaining terms coming from $s_k^\lambda$ cause no troubles. The term with $\log(\vr+\sqrt{\lambda})$  tends to zero as $\sumkN \nabla Y_k$ goes to zero  faster than $\log \lambda$ blows up; the other term with $\log \vt$ is well defined.

Finally, the form of internal energy and \eqref{est_sigma_y} imply that after passing with $\lambda$
we have
\begin{multline*}
\int_{\Omega}\ep(\Delta \vr +\bar \vr-\vr)(\vr^{\gamma-1}+e+\vt)\frac{\psi}{\vt}\dx
\\=\ep\frac{\gamma}{\gamma-1}\int_{\Omega} \frac{\psi}{\vt} \vr^{\gamma-1}(\Delta \vr +\bar \vr-\vr)\dx
+\ep\int_{\Omega}\psi(\Delta \vr +\bar \vr-\vr)\sumkN c_{pk}Y_k\dx.
\end{multline*}
The second term cancels with the 5th term on the rhs of \eqref{entropy_aprox}.
We can therefore
pass to the limit with $\lambda$ obtaining 
\begin{multline} \label{entropy_aprox_epsilon}
\int_{\Omega} \frac{\psi \tn{S} : \nabla \vu}{\vt}\dx
+\int \kappad\frac{(\varepsilon+\vt)}{\vt}\frac{|\nabla \vt|^2}{\vt^2}\psi \dx
-\int_{\Omega} \psi\sumkN  \omega_k (c_{pk}-c_{vk} \log \vt + \log Y_k)\psi \dx\\
+\int_{\Omega}\frac{\delta \psi}{\vt^2}\dx + \int_{\Omega} \psi \sumkN \sumlN \widehat{D}_{kl}\nabla Y_l \nabla Y_k\dx
+ \int_{\partial \Omega} \frac{\psi}{\vt}(L+\delta \vt^{B-1})\vt_0 \, \dS\\ 
+\int_{\Omega}\frac{\delta \varepsilon (\beta \vr^{\beta-2}+2)|\nabla \vr|^2\psi}{\vt}\dx
+\int_{\Omega}\psi\sumkN\ep(\vr+1)\frac{|\nabla Y_k|^2}{Y_k}\dx \\
+\ep\gamma\int_{\Omega}\frac{\psi}{\vt}|\nabla \vr|^2\dx
+\ep\frac{\gamma}{\gamma-1}\int_{\Omega}\frac{\psi}{\vt}\vr^\gamma\dx\\
\leq \int \frac{\kappad(\varepsilon+\vt)\nabla \vt \cdot \nabla \psi}{\vt^2} \dx
-\int_{\Omega} \vr s \vu \cdot \nabla \psi \dx
-\int_{\Omega} \sumkN (c_{vk} \log \vt-\log Y_k) \widehat{\vc{F}}_k \cdot\nabla \psi \dx \\ 
+ \int_{\partial \Omega} \frac{\psi}{\vt} \big((L+\delta\vt^{B-1})\vt +\ep \log \vt) \, \dS 
-\varepsilon \int_{\Omega} \sumkN Y_k \nabla \vr \cdot \nabla \Big(\frac{g_k \psi}{\vt}\Big) \dx\\
-\ep\gamma\int_{\Omega}\vr^{\gamma-1}\nabla\vr\cdot\frac{\nabla\psi}{\vt}\dx
+\ep\frac{\gamma}{\gamma-1}\int_{\Omega}\psi\vr^{\gamma-1}\nabla\vr\cdot\frac{\nabla\vt}{\vt^2}\dx
+\ep\frac{\gamma}{\gamma-1}\int_{\Omega}\frac{\psi}{\vt}\bar\vr\vr^{\gamma-1}\dx\\
+\varepsilon \frac{M}{|\Omega|}  \int_{\Omega} \psi \sumkN  (c_{pk}-c_{vk}\log \vt + 1_{\{\vr>1\}} \log \vr)\dx \\
-\varepsilon \int_{\Omega} \sumkN Y_k \vr \frac{g_k \psi}{\vt}\dx
+ \ep \int_\Omega \sumkN (\vr+1) \nabla Y_k \cdot \nabla \psi \dx \\
+ \ep \int_\Omega \sumkN Y_k \log Y_k \nabla \vr \cdot \nabla \psi \dx
+ \ep \int_\Omega \sumkN (\vr+1) Y_k \log Y_k \Delta \psi \dx \\
- \ep \int_{\partial \Omega} \sumkN (\vr+1) Y_k \log Y_k \nabla \psi \cdot \vc{n} \, \dS + \ep \int_\Omega \sumkN (\vr+1) c_{vk}  \log \vt \nabla Y_k \cdot \nabla \psi \dx,
\end{multline}
where we have integrated by parts the term $\int\frac{\psi}{\vt}\vr^{\gamma-1}\Delta\vr\dx$, used the fact that $\log Y_k \leq 0$ for $\lambda =0$, $\log \vr <0$ for $\vr <1$ 
and $\widehat{D}_{kl}$ is defined in \eqref{dkl_aprox}.

\subsection{Limit passage $\ep \to 0$} 

First of all, we have the following estimates independent of $\ep$:
\begin{multline} \label{est_2}
\sqrt{\ep}\Big(\Big\|\frac{|\nabla Y_k|}{\sqrt{Y_k}}\Big\|_2+ \Big\|\frac{\nabla \vr}{\vt^{1/2}}\Big\|_2 + \|\nabla \vr\|_2\Big) + \sumkN(\|Y_k\|_{1,2} + \|Y_k\|_{\infty})   + \|\Grad \vt^{\frac B2}\|_2 + \|\vt\|_{B,\partial \Omega} \\
 + \|\vt\|_{3m} + \|\vt^{-2}\|_1 
+\Big\|\frac{\nabla \vt}{\vt^{3/2}}\Big\|_2 +\|\vt^{-1}\|_{1,\partial \Omega} + \|\vu\|_{1,2} 
\leq C \Big(1+ \Big|\intO{\vr \vc{f}\cdot \vu}\Big|\Big).
\end{multline} 
These estimates follow from the entropy inequality and the total energy balance, both with the test function $\psi \equiv 1$, and the continuity equation.
At this stage we cannot dispose of the estimates on the density (except the $L^1$ bound due to
given mass) since they depend on $\varepsilon$. We have to show some estimates of the density which will imply that the rhs of \eqref{est_2} can be controlled.

Note that the momentum equation is in fact the same as in the case of the 
compressible Navier--Stokes--Fourier system studied in  \cite{NoPo1}, so we may apply the same 
technique to obtain the so called Bogovskii-type of estimates. 
Following \cite{NoPo1}, we use as test function in \eqref{weak_mom} the function $\vcg{\phi}$, solution to
$$
\displaystyle \Div \vcg{\phi} = \vr^{\frac 23 \beta} - \frac{1}{|\Omega|}\intO {\vr^{\frac 23\beta}}, \qquad
\vcg{\phi}|_{\partial \Omega} = \vc{0}.
$$   
For more information on the Bogovskii operator, we refer the reader to e.g. \cite[Lemma 3.17]{NS}.
In consequence of this testing we may obtain the additional bound on $\vr$, namely
\eqh{ \label{est_rho}
\|\vr\|_{\frac 53 \beta} \leq C,
}
which allows to estimate the rhs of \eqref{est_2}. Now we can proceed with the limit passage. 
Note that the estimates of the density do not imply the compactness of it, however, 
using the DiPerna--Lions renormalization technique applied on the continuity equation 
and the consequences of the effective viscous flux identity, as it is well-known in the 
case of compressible Navier--Stokes(--Fourier) system, we may show the strong convergence 
of the densities in $L^p$ for any $p<\beta$. 
As we have to repeat this proceedure also in the final limit passage we present 
the crucial steps there, referring for more details 
to \cite{NS} or to \cite{NoPo1} in the case of heat--conducting fluid.
Therefore we have after the limit passage $\ep\to 0^+$\\
$\bullet$ the continuity equation
\begin{equation} \label{cont_aprox_3}
\int_\Omega \vr \vu\cdot \nabla \psi= 0
\end{equation}
for all $\psi \in C^\infty(\Ov{\Omega})$ \\
$\bullet$ the weak formulation of the approximate momentum equation
\begin{equation} \label{aprox_mom_3}
\begin{array}{c}
\displaystyle
\int_\Omega\Big(-\vr\left(\vu\otimes\vu\right):\Grad\vcg{\varphi}-\tn{S}:\Grad\vcg{\varphi}\Big)\dx\\
\displaystyle 
-\int_\Omega (\pi+\delta\vr^{\beta}+\delta\vr^{2})\Div\vcg{\varphi} \dx
=\int_\Omega\vr \vc{f}\cdot\vcg{\varphi} \dx
\end{array}
\end{equation}
for all $\vcg{\varphi}\in C^\infty_0(\Omega)$\\
$\bullet$ the weak formulation of the approximate species balance equations  
\begin{equation}\label{aprox_spec_3}
\begin{array}{c}
\displaystyle  \int_\Omega \Big( -Y_{k}\vr\vu\cdot \Grad \psi +\sum_{l=1}^n Y_k D_{kl}\Grad Y_l \cdot \Grad \psi\Big)\dx  =  \intO{\omega_{k} \psi}
\end{array}
\end{equation}
for all $\psi \in C^\infty(\Ov{\Omega})$ ($k=1,2,\dots,n$)\\
$\bullet$ the weak formulation of the approximate total energy equation
\begin{equation} \label{aprox_tot_en_3}
\begin{array}{c}
\displaystyle -\intO{\Big[\vr e+ \frac 12 \vr |\vu|^2 + (\pi + \delta \vr^\beta + \delta \vr^2)\Big]\vu\cdot \Grad \psi} 
\\
\displaystyle -\intO{\Big(\tn{S} \vu \cdot \Grad \psi + \delta \vt^{-1} \psi\Big)} 
+\intO{\kappa_{\delta}\Grad\vt\cdot\Grad \psi} \\
\displaystyle  + \int_{\partial \Omega}\big[(L+\delta \vt^{B-1})(\vt-\vt_0)\big] \psi \, \dS 
\displaystyle + \intO{\vt\sum_{k,l=1}^n c_{vk} Y_k  D_{kl}\Grad Y_l \cdot \Grad \psi} \\
\displaystyle  = \intO{\vr \vc{f} \cdot \vu \psi} + \frac{\delta}{\beta-1} \intO{\vr^\beta \vu \cdot \Grad \psi} 
\displaystyle+\delta \intO{\vr^2 \vu \cdot \Grad \psi} 
\end{array}
\end{equation}
for all $\psi \in C^\infty(\Ov{\Omega})$

Next we deal with the limit passage in the entropy inequality. The lhs does not cause any troubles: we use the weak lower semicontinuity of certain terms or simply cancel some non-negative terms. Most of the terms are easy to treat, the only difficult one is in fact the term
$$
\ep\gamma\int_{\Omega}\frac{\vr^{\gamma-1}}{\vt^2}\nabla\vr\cdot\nabla\vt \psi \dx
$$
which must be controlled by the lhs (in fact, already at the moment when we want to deduce the $\ep$-independent estimates). 
However, using the fact that $\ep \ll \delta$ and $\beta$ is sufficiently high 
we may estimate it by 
$$
\frac 14 \int_{\Omega}\frac{\delta \varepsilon (\beta \vr^{\beta-2}+2)|\nabla \vr|^2\psi}{\vt}\dx + \frac 14 \int \kappad\frac{(\varepsilon+\vt)}{\vt}\frac{|\nabla \vt|^2}{\vt^2}\psi \dx,
$$
in particular by the part $\delta \vt^{-1}$ in $\kappad$. 
The other terms are easy to treat and we end up with 
\begin{multline} \label{entropy_aprox_delta}
\int_{\Omega} \frac{\psi \tn{S} : \nabla \vu}{\vt}\dx
+\int \kappad\frac{|\nabla \vt|^2}{\vt^2}\psi \dx
-\int_{\Omega}\sumkN \omega_k (c_{pk}-c_{vk} \log \vt + \log Y_k)\psi \dx\\
+\int_{\Omega}\frac{\delta \psi}{\vt^2}\dx + \int_{\Omega} \psi \sumkN \sumlN D_{kl}\nabla Y_l \nabla Y_k\dx
+ \int_{\partial \Omega} \frac{\psi}{\vt}(L+\delta \vt^{B-1})\vt_0 \, \dS\\ 
\leq \int \frac{\kappad\nabla \vt \cdot \nabla \psi}{\vt} \dx
-\int_{\Omega} \vr s \vu \cdot \nabla \psi \dx
-\int_{\Omega} \sumkN (c_{vk} \log \vt-\log Y_k) \vc{F}_k \cdot\nabla \psi \dx \\ 
+ \int_{\partial \Omega} (L+\delta\vt^{B-1}) \psi \, \dS. 
\end{multline}

\section{Limit passage $\delta \to 0$}

In the final limit passage we can distinguish three steps. 
The first is in fact a direct application of the method 
from \cite{NoPo1}, where we refer for details. 
In the second step we derive new pressure estimates using the approach from \cite{NoPo2}. 
In fact, we clarify here one estimate in more details, cf. \cite{MPZ_HB}. We can therefore 
pass to the limit in the equations and the entropy inequality, however, we are not able to identify the weak limits in the terms which are non-linear in the density. To this aim, we 
finally show the strong convergence of the density using the techniques developed for compressible 
Navier--Stokes system (which is possible as the momentum and continuity equations are indeed the same).
\subsection{Estimates independent of $\delta$}

Unlike the previous sections, we will denote throughout this section by $(\vrd, \vud,\vtd, \vcYd)$ the solution corresponding to $\delta>0$, while $(\vr,\vu,\vt,\vec{Y})$ will denote the (weak or strong) limits of the corresponding functions when $\delta \to 0^+$. Furthermore, $\vec{Y}=(Y_1,Y_2,\dots,Y_n)^T$, similarly for $\vcYd$.

\subsubsection{Estimates from the entropy inequality}
 
From the total energy balance \eqref{aprox_tot_en_3} tested by a constant function 
we derive
\begin{equation} \label{8.9b}
\|\vtd\|_{1,\partial \Omega} + \delta \|\vtd^B\|_{1,\partial \Omega} \leq C\Big(1+ \Big|\intO{\vrd\vud\cdot \vc{f}}\Big| + \delta \|\vtd^{-1}\|_1\Big).
\end{equation} 
Next, the entropy inequality \eqref{entropy_aprox_delta} with $\psi \equiv C$ yields  
\begin{multline} \label{8.9a}
\|\Grad \vcYd\|_2^2 + \|\Grad \vtd^{\frac m2}\|_2^2 + \|\vud\|_{1,2}^2 + \|\vtd^{-1}\|_{1,\partial \Omega} \\  + \delta \big(\|\Grad \vtd^{\frac B2}\|_2^2 + \|\Grad \vtd^{-\frac 12}\|_2^2 + \|\vtd^{-2}\|_1 +\|\vtd^{B-2}\|_{1,\partial \Omega}\big) \leq C(1+ \delta \|\vtd^{B-1}\|_{1,\partial \Omega}).
\end{multline}
Recall also that we know $0\leq (Y_k)_\delta\leq 1$, $k=1,2,\dots, n$. 
In order to get rid of the $\delta$-dependent terms in the above estimates we apply once again Bogovskii-type estimates,
this time testing the momentum equation by a solution to  
$$
\Div \, \vcg{\phi} = \vrd-\frac{M}{|\Omega|},  \qquad
\vcg{\phi}|_{\partial \Omega} = \vc{0}.
$$
It is an easy matter to verify the bound (see also \cite{NoPo1})
$$
\delta \|\vrd\|_{\beta+1}^{\beta-\frac 32} \leq C.
$$
Applying this estimate to \eqref{8.9a} and \eqref{8.9b} we can get rid of most of $\delta$-terms obtaining
\eq{\label{8.14}
&\|\Grad \vcYd\|_2^2+\|\vcYd\|_{\infty} + \|\Grad \vtd^{\frac m2}\|_2 + \|\vud\|_{1,2} + \|\vtd^{-1}\|_{1,\partial \Omega} \\
&+ \delta (\|\Grad \vtd^{\frac B2}\|_2^2 + \|\Grad \vtd^{-\frac 12}\|_2^2 + \|\vtd^{-2}\|_1 +\|\vtd^{B-2}\|_{1,\partial \Omega}) \leq C
}
and
\eq{\label {vt_3m_0}
\|\vtd\|_{3m} \leq C\Big(1+ \Big|\int_{\Omega}\vrd\vud\cdot\vc{f}\dx\Big| \Big).
}
See also \cite{KrNePo} for similar computations in the case of a more complex dependence of the viscosity on the temperature. 
\subsubsection{Local pressure estimates}

The second step consist in derivation of $\delta$-independent estimates for the density. This is the core estimate
which finally will allow us to get a bound $\gamma>\frac{4}{3}$ for weak solutions and $\gamma>1$ for variational
entropy solutions.  
Here we follow the idea of local pressure estimates introduced in several papers by Plotnikov and Sokolowski (see \cite{PlSo}), Novotn\'y and B\v{r}ezina (\cite{NoBr}) and Frehse, Steinhauer and Weigant (\cite{FrStWe}) and applied to the compressible Navier--Stokes--Fourier system in \cite{NoPo2}; see also \cite{MPZ_HB} for further information.
 
For $b>1$ let us denote  
$$
A = \int_{\Omega}\vrd^b|\vud|^2\dx.
$$
Applying H\"older's inequality to the rhs of \eqref{vt_3m_0} we get 
\begin{equation} \label{vt_3m}
\|\vtd\|_{3m} \leq C(1+A^{\frac{1}{6b-4}}).
\end{equation}
Next we apply once again Bogovskii-type estimate to show
\begin{lem} \label{lem1}
We have for $1<s\leq \frac{3b}{b+2}$, $s\leq \frac{6m}{2+3m}$, $m>\frac 23$ and $b\geq 1$
\begin{equation} \label{3.12}
\intO{\vrd^{s\gamma}}
+ \intO {\vrd^{(s-1)\gamma} \pi(\vrd,\vtd)} + \intO {\big(\vrd |\vud|^2\big)^s} + \delta \intO {\vrd ^{\beta + (s-1)\gamma}} \leq C\big(1+ A^{\frac {4s-3}{3b-2}}\big).
\end{equation}
\end{lem}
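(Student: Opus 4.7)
The approach is to test the weak momentum balance \eqref{aprox_mom_3} with a Bogovskii-type vector field designed to invert the divergence of a power of the density. Concretely, set $\Phi := \vrd^{(s-1)\gamma} - \frac{1}{|\Omega|}\int_\Omega \vrd^{(s-1)\gamma}\dx$ and let $\vcg{\phi}$ solve
$$\Div\, \vcg{\phi} = \Phi \quad \text{in }\Omega, \qquad \vcg{\phi}|_{\partial\Omega} = \vc{0},$$
via the classical Bogovskii operator. The standard estimate $\|\nabla \vcg{\phi}\|_p \leq C\|\vrd^{(s-1)\gamma}\|_p$ for every $1<p<\infty$ then gives, with $s' = s/(s-1)$,
$$\|\nabla \vcg{\phi}\|_{s'} \leq C \|\vrd\|_{s\gamma}^{(s-1)\gamma}.$$

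Inserting $\vcg{\phi}$ into \eqref{aprox_mom_3} and using the pointwise lower bound $\pi \geq c(\vrd^\gamma + \vrd\vtd)$, I would extract on the LHS the three absorbable contributions $\int_\Omega \vrd^{s\gamma}\dx$, $\int_\Omega \vrd^{(s-1)\gamma}\pi\dx$ and $\delta\int_\Omega \vrd^{\beta+(s-1)\gamma}\dx$, while the RHS splits into the convective integral $\int_\Omega \vrd\vud\otimes\vud:\nabla \vcg{\phi}\dx$, the viscous integral $-\int_\Omega \tn{S}:\nabla \vcg{\phi}\dx$, the external-force integral $\int_\Omega \vrd \vc{f}\cdot \vcg{\phi}\dx$, together with a mean-value remainder easily controlled through $\int_\Omega \vrd\dx = M$ and a crude interpolation against $\|\vrd\|_{s\gamma}$.

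The convective term is the source of the characteristic exponent $(4s-3)/(3b-2)$. The plan is to interpolate $\vrd|\vud|^2 = \vrd^{1-\lambda b}\bigl(\vrd^b|\vud|^2\bigr)^\lambda |\vud|^{2(1-\lambda)}$ and apply a triple Hölder inequality, assigning one factor to $A^\lambda$, another to $\|\vud\|_6^{2(1-\lambda)} \leq C\|\nabla\vud\|_2^{2(1-\lambda)}$ (bounded via \eqref{8.14} and the Sobolev embedding $W^{1,2}_0 \hookrightarrow L^6$), and the last to a controllable power of $\|\vrd\|_{s\gamma}$. Pairing with $\|\nabla \vcg{\phi}\|_{s'}\leq C\|\vrd\|_{s\gamma}^{(s-1)\gamma}$ and applying Young's inequality, with $\lambda$ chosen so that the residual power of $\|\vrd\|_{s\gamma}$ is strictly less than one, yields the prescribed $A^{(4s-3)/(3b-2)}$ contribution. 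The structural constraint $s \leq \tfrac{3b}{b+2}$ is precisely the feasibility condition for the three Hölder exponents to be non-negative.

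For the viscous term I would use $|\tn{S}| \leq C(1+\vtd)|\nabla\vud|$ together with Hölder in a triple $L^{3m}\times L^2\times L^r$, where $r$ is conjugate to the product of the first two. The temperature bound \eqref{vt_3m} combined with $\|\nabla\vud\|_2 \leq C$ then converts this contribution into a power of $A$ strictly smaller than $(4s-3)/(3b-2)$; the constraint $s\leq \tfrac{6m}{2+3m}$ is exactly what makes this Hölder triple admissible. The external-force integral is estimated by $C\|\vcg{\phi}\|_\infty M$ and is negligible via $W^{1,p}\hookrightarrow C^0$ for $p>3$. A final application of Young's inequality absorbs all $\|\vrd\|_{s\gamma}$ factors into the LHS and yields the bound on three of the four summands of \eqref{3.12}. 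The remaining summand $\int_\Omega(\vrd|\vud|^2)^s\dx$ is then produced by the very interpolation used in the convective estimate, since at that stage every factor is already controlled by $1+A^{(4s-3)/(3b-2)}$. The main obstacle is the precise exponent bookkeeping: verifying that the choice of $\lambda$ makes the residual $\|\vrd\|_{s\gamma}$ power strictly sub-unit \emph{and} produces exactly $(4s-3)/(3b-2)$ on the Young side, while matching sharpness against both structural bounds on $s$, is the delicate part.
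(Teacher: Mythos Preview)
Your approach is essentially the same as the paper's: test the approximate momentum balance with the Bogovskii field $\vcg{\phi}$ solving $\Div\vcg{\phi}=\vrd^{(s-1)\gamma}-\frac{1}{|\Omega|}\int_\Omega\vrd^{(s-1)\gamma}\dx$, extract the pressure and $\delta$-terms on the left, and estimate the five right-hand side integrals. The paper's own proof is only a sketch (deferring the arithmetic to \cite{NoPo2}) and singles out exactly the two terms you do: the convective term $I_3$ produces the constraint $s\le\frac{3b}{b+2}$, while the viscous term $I_4$ forces $s\le\frac{6m}{2+3m}$ and $m>\frac23$.

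One small correction: your treatment of the external-force term via $\|\vcg{\phi}\|_\infty$ through $W^{1,p}\hookrightarrow C^0$ with $p>3$ is not always available, since $\|\nabla\vcg{\phi}\|_p\le C\|\vrd^{(s-1)\gamma}\|_p$ requires $\vrd\in L^{(s-1)\gamma p}$, and $(s-1)\gamma\cdot 3$ may exceed $s\gamma$ when $s\ge\frac32$ (which is permitted for $b\ge 2$). This is harmless---just pair $\vrd$ in $L^{s\gamma}$ directly against $\vcg{\phi}$ in the Sobolev-embedded Lebesgue space coming from $W^{1,s'}$ and absorb the resulting sub-unit power of $\|\vrd\|_{s\gamma}$ by Young---but the $L^\infty$ route as written does not close for the full range of $s$.
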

\begin{proof} We sketch the main steps referring to \cite{NoPo2} for more details. Testing the momentum 
equation with $\vcg{\phi}$ solving 
$$
\Div \, \vcg{\phi} = \vrd^{(s-1)\gamma}-\frac{1}{|\Omega|}\int_{\Omega} \vrd^{(s-1)\gamma}\dx ,  \qquad
\vcg{\phi}|_{\partial \Omega} = \vc{0}
$$
we obtain 
\begin{equation} \label{bog2}
\begin{array}{c}
\displaystyle \intO {\vrd^{(s-1)\gamma} \pi(\vrd,\vtd)} + \delta \intO {\vrd^{(s-1)\gamma} \big(\vrd^\beta + \vrd^2)} = \frac{1}{|\Omega|} \intO {\pi(\vrd,\vtd)} \intO {\vrd^{(s-1)\gamma}} \\[9pt]
\displaystyle + \frac{\delta}{|\Omega|} \intO{(\vrd^\beta + \vrd^2)}  \intO {\vrd^{(s-1)\gamma}} - \intO{\vrd(\vud\otimes \vud):\nabla \vcg{\phi}} 
+ \intO{\tn{S}(\vtd,\nabla \vud): \nabla \vcg{\phi}}\\
\displaystyle - \intO{\vr \vc{f} \cdot \vcg{\phi}} = I_1 + I_2 + I_3 + I_4 + I_5.
\end{array}
\end{equation}
We have to estimate the rhs. The most restrictive terms are $I_3$, giving the restriction on $s$, and $I_4$ which leads to the other restrictions, especially to $m>\frac 23$. For more details see \cite{NoPo2}.
\end{proof}
    
Now we come to the core of our estimates.
The idea is to test the momentum equation by a cleverly chosen function involving the distance from the boundary 
to find a bound
$$ 
{\rm sup}_{x_0 \in \bar \Omega}\int_{\Omega}\frac{\pi(\vrd,\vtd)}{|x-x_0|^\alpha}\dx \leq C 
$$
for some $\alpha>0$ with $C$ independent from $\delta$. We have to use different test 
functions distinguishing 3 cases: $x_0$ far from the boundary, $x_0$ at the boundary 
and finally $x_0$ close to the boundary. The first two cases are treated in details 
in \cite{NoPo2}, therefore we only recall the results here. The third case is most delicate 
and has not been presented so far and some ideas can be only found in \cite{MPZ_HB}. 

The case of $x_0$ far from the boundary is the easiest. We test the momentum equation 
\eqref{aprox_mom_3} with 
\begin{equation} \label{def_phizero}
\vphi^0(x) = \frac{x-x_0}{|x-x_0|^\alpha} \tau^2
\end{equation}
with $\tau \equiv 1$ in $B_{R_0}(x_0)$, 
$\tau \equiv 0$ outside $B_{2R_0}(x_0)$ with $R_0$ as below, 
$|\nabla \tau| \leq \frac{C}{R_0}$. 
Calculating directly the derivatives of $\varphi$  
we obtain (see \cite[Lemma 3.4]{NoPo2} or \cite{MPZ_HB}): 
\begin{lem}  \label{lem_int}
Let $x_0 \in \Omega$, $R_0 < \frac 13 \mbox{dist}\, (x_0,\partial\Omega)$. Then
\begin{equation} \label{3.14}
\begin{array}{c}
\displaystyle
\int_{B_{R_0}(x_0)} \frac{\pi(\vrd,\vtd)+\delta(\vrd^\beta+\vrd^2)}{|x-x_0|^\alpha} \dx \\
\displaystyle \leq C\big(1+ \|\pi(\vrd,\vtd)\|_1 + \|\vud\|_{1,2}(1+\|\vtd\|_{3m}) + \|\vrd|\vud|^2\|_1\big),
\end{array}
\end{equation}
provided
\begin{equation} \label{3.15}
\alpha < \min\Big\{\frac{3m-2}{2m},1\Big\}.
\end{equation}
\end{lem}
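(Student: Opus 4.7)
The plan is to use $\vphi^0$ from \eqref{def_phizero} as test function in the momentum equation \eqref{aprox_mom_3}. Strictly speaking, $\vphi^0$ is not $C^1$ at $x=x_0$, but since $1-\alpha>0$ the function is continuous and its derivatives are locally integrable with the singularities described below, so the testing is legitimate (one mollifies around $x_0$ and lets the mollification parameter tend to zero; all terms converge by dominated convergence once the estimates below are in place). The support of $\vphi^0$ lies in $B_{2R_0}(x_0)\Subset\Omega$, so boundary terms do not appear.

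A direct computation away from $x_0$ gives
\eqh{
\Div\vphi^0=(3-\alpha)\frac{\tau^2}{|x-x_0|^\alpha}+2\tau\,\Grad\tau\cdot\frac{x-x_0}{|x-x_0|^\alpha},
}
and for the gradient $|\Grad\vphi^0|\le C\bigl(\tau^2|x-x_0|^{-\alpha}+\tau|x-x_0|^{1-\alpha}|\Grad\tau|\bigr)$. Consequently the pressure term on the left-hand side of \eqref{aprox_mom_3} produces
\eqh{
-\int_\Omega \bigl(\pi(\vrd,\vtd)+\delta(\vrd^\beta+\vrd^2)\bigr)\Div\vphi^0\dx
=-(3-\alpha)\!\!\int_{B_{2R_0}(x_0)}\!\!\!\tau^2\frac{\pi(\vrd,\vtd)+\delta(\vrd^\beta+\vrd^2)}{|x-x_0|^\alpha}\dx + \mathcal{R},
}
where the remainder $\mathcal{R}$, coming from $2\tau\Grad\tau\cdot(x-x_0)/|x-x_0|^\alpha$, is supported in the annulus $R_0\le|x-x_0|\le 2R_0$ where the kernel is bounded, so $|\mathcal{R}|\le C(R_0)\|\pi(\vrd,\vtd)+\delta(\vrd^\beta+\vrd^2)\|_1$. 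Since $\tau\equiv 1$ on $B_{R_0}(x_0)$, this isolates precisely the quantity we want to estimate on the left.

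The main step is to control the remaining terms in \eqref{aprox_mom_3} after moving the pressure contribution to the left. For the external force, $|\vphi^0|\le R_0^{1-\alpha}$ is bounded provided $\alpha<1$, hence $|\int\vrd\vc{f}\cdot\vphi^0|\le C\|\vc{f}\|_\infty\|\pi(\vrd,\vtd)\|_1^0$ is controlled by $1$ through the mass constraint. For the convective term, H\"older's inequality combined with Sobolev embedding $\vu\in L^6$ yields
\eqh{
\Bigl|\int_\Omega \vrd(\vud\otimes\vud):\Grad\vphi^0\dx\Bigr|
\le C\int_\Omega \frac{\vrd|\vud|^2}{|x-x_0|^\alpha}\dx\le C\|\vrd|\vud|^2\|_1^{1-\theta}\|\vud\|_{1,2}^{2\theta}
}
for an appropriate $\theta\in(0,1)$ depending on $\alpha$, where the kernel $|\cdot|^{-\alpha}$ belongs to $L^p_{\rm loc}$ for any $p<3/\alpha$; here the restriction $\alpha<1$ is enough. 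Finally, for the viscous term $\tn{S}(\vtd,\Grad\vud):\Grad\vphi^0$, the growth assumption \eqref{growth_mu} and H\"older with exponents $(3m,2,q)$ where $\tfrac1{3m}+\tfrac12+\tfrac1q=1$ give
\eqh{
\Bigl|\int_\Omega\tn{S}:\Grad\vphi^0\dx\Bigr|\le C\|1+\vtd\|_{3m}\|\Grad\vud\|_2\,\bigl\||x-x_0|^{-\alpha}\bigr\|_{L^q(B_{2R_0})},
}
and the last norm is finite exactly when $\alpha q<3$, that is $\alpha<\tfrac{3(3m-2)}{6m}=\tfrac{3m-2}{2m}$, which is the content of \eqref{3.15}. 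Absorbing the cut-off remainder $\mathcal{R}$ and the force term into the constants produces \eqref{3.14}. The hardest point is simply the choice of the singular test function and the identification of the correct condition on $\alpha$ coming from the viscous stress; everything else is routine H\"older bookkeeping using the $\delta$-independent estimates already derived.
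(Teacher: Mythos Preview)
Your approach is the same as the paper's: test \eqref{aprox_mom_3} with $\vphi^0$ from \eqref{def_phizero} and estimate each resulting term. The divergence computation, the treatment of the cut-off remainder, the force term, and the viscous term (which indeed produces the constraint $\alpha<\frac{3m-2}{2m}$) are all correct.

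There is, however, a genuine gap in your handling of the convective term. You bound its absolute value by $C\int \vrd|\vud|^2\,|x-x_0|^{-\alpha}\dx$ and then claim this is controlled by $\|\vrd|\vud|^2\|_1^{1-\theta}\|\vud\|_{1,2}^{2\theta}$ via H\"older and Sobolev. This step does not work: at this stage you only have $\vrd|\vud|^2\in L^1$ (any $L^p$ bound with $p>1$ would come from Lemma~\ref{lem1} and thus depend on $A$, making the argument circular), and an $L^1$ function integrated against the singular weight $|x-x_0|^{-\alpha}$ is not controlled by its $L^1$ norm. The correct argument uses the \emph{sign} of the full symmetric part of $\nabla\vphi^0$: from
\[
\partial_j\vphi^0_i=\frac{\tau^2}{|x-x_0|^\alpha}\Big(\delta_{ij}-\alpha\frac{(x-x_0)_i(x-x_0)_j}{|x-x_0|^2}\Big)+\text{terms supported in }\{R_0\le|x-x_0|\le 2R_0\}
\]
one gets, for $0<\alpha<1$,
\[
\vrd(\vud\otimes\vud):\nabla\vphi^0
=\frac{\tau^2}{|x-x_0|^\alpha}\Big(\vrd|\vud|^2-\alpha\,\vrd\frac{(\vud\cdot(x-x_0))^2}{|x-x_0|^2}\Big)+\text{bounded remainder}
\ \ge\ -C\,\vrd|\vud|^2\ \mbox{on the annulus}.
\]
Hence $-\int_\Omega\vrd(\vud\otimes\vud):\nabla\vphi^0\dx\le C\|\vrd|\vud|^2\|_1$, which is exactly the quantity appearing on the right-hand side of \eqref{3.14}. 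This is the mechanism the paper alludes to with ``calculating directly the derivatives of $\vphi$'' and it is the same positivity used explicitly in \eqref{e1} and \eqref{e3} for the other cases. Once you replace your interpolation line by this sign argument, the proof is complete.
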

Next we treat the case $x_0 \in \partial\Omega$. This time we use in \eqref{aprox_mom_3}
a test function 
\begin{equation} \label{phi1}
\vcg{\varphi}^1(x) = d(x) \nabla d(x) (d(x) + |x-x_0|^a)^{-\alpha}
\end{equation}  
where $a=\frac{2}{2-\alpha}$ and  $d(x)$ is a function which behaves like ${\rm dist}(x,\partial\Omega)$ near the boundary and it is a $C^2(\Ov{\Omega})$ function. 
It can be shown (see \cite[Lemma 3.5]{NoPo2} or \cite{MPZ_HB}) that
$\vcg{\varphi}^1 \in W^{1,q}_0(\Omega)$ for $1\leq q < \frac{3-\alpha}{\alpha}$ 
and
\begin{equation} \label{prop_phi1}
\begin{array}{c}
\displaystyle \partial_j \vcg{\varphi}_i^1(x) = \frac{d(x) \partial^2_{ij}d(x)}{(d(x)+|x-x_0|^a)^{\alpha}} + \frac{(1-\alpha) d(x) + |x-x_0|^a}{2(d(x)+|x-x_0|^a)^{1+\alpha}} \partial_i d(x) \partial_j d(x) \\[8pt]
\displaystyle + \frac{(1-\alpha)d(x) + |x-x_0|^a}{2(d(x)+ |x-x_0|^a)^{1+\alpha}} (\partial_i d(x) - \mu^i(x))(\partial_j d(x) - \mu^j(x)) \\[8pt]
\displaystyle + \frac{\alpha d(x) [\partial_j d(x) \partial_i (|x-x_0|^a)- \partial_i d(x) \partial_j (|x-x_0|^a)]}{2(d(x)+|x-x_0|^a)^{1+\alpha}} \\[8pt]
\displaystyle - \frac{\alpha^2 d^2(x) \partial_i (|x-x_0|^a) \partial_j (|x-x_0|^a)}{2(d(x)+|x-x_0|^a)^{1+\alpha} \big((1-\alpha)d(x) + |x-x_0|^a\big)},
\end{array}
\end{equation}
where
$$
\mu^i(x) = \alpha d(x) \big((1-\alpha)d(x) + |x-x_0|^a\big)^{-1} \partial_i (|x-x_0|^a),
$$
$i=1,2,3$.
These properties of $\vcg{\varphi}^1$ enable to show the 
estimate (\cite{NoPo2}, Lemma 3.6):
\begin{lem} \label{lem_bdr}
Under the assumptions above, we have for $\alpha < \frac{9m-6}{9m-2}$, $x_0 \in \partial \Omega$ and $R_0$ sufficiently small (uniformly with respect to $x_0$) 
\begin{equation} \label{3.20}
\int_{B_{R_0}(x_0)\cap \Omega} \frac{\pi(\vrd,\vtd)+\delta(\vrd^\beta+\vrd^2)}{|x-x_0|^\alpha} \dx \leq 
C\big(1+ \|\pi(\vrd,\vtd)\|_1 + (1+\|\vtd\|_{3m})\|\vud\|_{1,2} + \|\vrd |\vud|^2\|_1\big).
\end{equation}
\end{lem}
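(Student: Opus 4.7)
My plan is to test the weak momentum equation \eqref{aprox_mom_3} with $\vcg{\varphi}^1$ from \eqref{phi1} and to extract the boundary pressure estimate from the pressure term on the left-hand side. Admissibility is already noted after \eqref{phi1}: $\vcg{\varphi}^1$ carries the factor $d(x)$ so it vanishes on $\partial\Omega$, and it belongs to $W^{1,q}_0(\Omega)$ for every $q<\frac{3-\alpha}{\alpha}$. One fixes $R_0$ small enough (uniformly in $x_0\in\partial\Omega$) so that $d$ coincides with $\operatorname{dist}(\cdot,\partial\Omega)$ on $B_{R_0}(x_0)\cap\Omega$, where \eqref{prop_phi1} is valid without curvature singularities.

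\medskip

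Rearranging the momentum equation tested by $\vcg{\varphi}^1$ gives
\[
\int_{\Omega}(\pi+\delta\vrd^{\beta}+\delta\vrd^{2})\Div \vcg{\varphi}^1\,\dx = -\int_{\Omega}\vrd(\vud\otimes \vud):\Grad \vcg{\varphi}^1\,\dx-\int_{\Omega}\tn{S}:\Grad \vcg{\varphi}^1\,\dx +\int_{\Omega}\vrd \vc{f}\cdot \vcg{\varphi}^1\,\dx.
\]
Taking the trace of \eqref{prop_phi1} and using $|\Grad d|^2=1$: the fourth (antisymmetric) term vanishes when $i=j$; the first contributes $d(x)\Delta d(x)(d+|x-x_0|^a)^{-\alpha}\in L^{\infty}$; the third is a non-negative sum of squares; and the fifth is pointwise dominated by a fraction of the positive second term. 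Hence
\[
\Div \vcg{\varphi}^1 \geq c\,\frac{(1-\alpha)d(x)+|x-x_0|^{a}}{(d(x)+|x-x_0|^{a})^{1+\alpha}}-C \geq c\,(d(x)+|x-x_0|^{a})^{-\alpha}-C,
\]
and since $a=\frac{2}{2-\alpha}>1$, $d(x)\leq |x-x_0|$ and $|x-x_0|<R_0<1$, we have $(d(x)+|x-x_0|^{a})^{-\alpha}\geq c|x-x_0|^{-\alpha}$ on $B_{R_0}(x_0)\cap \Omega$, producing the integrand of \eqref{3.20} up to an $L^{1}$ error absorbed into $\|\pi(\vrd,\vtd)\|_1$.

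\medskip

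It remains to bound the three terms on the right. The force term is trivial since $|\vcg{\varphi}^1|\leq C$. The convective term is controlled by $\|\vrd|\vud|^2\|_1$ combined with pointwise bounds on $\Grad \vcg{\varphi}^1$ read off from \eqref{prop_phi1}. The delicate one, which fixes the admissible range of $\alpha$, is the viscous term: using $|\mu(\vtd)|\leq C(1+\vtd)$ and H\"older's inequality with $\frac12+\frac{1}{3m}+\frac{1}{q}=1$, i.e.\ $q=\frac{6m}{3m-2}$, one needs $\Grad \vcg{\varphi}^1\in L^{q}(\Omega)$; comparing with the admissibility threshold $q<\frac{3-\alpha}{\alpha}$ yields $\frac{6m}{3m-2}<\frac{3-\alpha}{\alpha}$, which after rearrangement is precisely $\alpha<\frac{9m-6}{9m-2}$. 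The main technical obstacle is to verify, from the explicit formula \eqref{prop_phi1}, that $\Grad \vcg{\varphi}^1$ actually lies in this $L^{q}$ space (and not just in the critical weak-$L^q$), and this is exactly where the sharp choice $a=\frac{2}{2-\alpha}$ enters, as it balances the size of $|\Grad \vcg{\varphi}^1|$ against the boundary layer carried by $d(x)$.
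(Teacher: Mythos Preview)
Your overall strategy is correct and coincides with what the paper invokes (the result is cited from \cite[Lemma 3.6]{NoPo2}, and the structure you describe is exactly how the paper exploits $\vcg{\varphi}^1$ in the neighbouring Lemma~\ref{lem_close}; see \eqref{e1}--\eqref{e2}). Your treatment of $\Div\vcg{\varphi}^1$ and of the viscous term, including the derivation of the threshold $\alpha<\frac{9m-6}{9m-2}$ via the H\"older splitting $\frac12+\frac{1}{3m}+\frac1q=1$, is right.

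There is, however, a genuine gap in your handling of the convective term. You claim it is controlled by $\|\vrd|\vud|^2\|_1$ ``combined with pointwise bounds on $\Grad\vcg{\varphi}^1$''. But $\Grad\vcg{\varphi}^1$ is \emph{not} pointwise bounded: the second line of \eqref{prop_phi1} behaves like $(d(x)+|x-x_0|^a)^{-\alpha}$, which is singular at $x_0$. What is true---and what the paper uses in \eqref{e1}---is a \emph{one-sided} bound on the quadratic form: contracting \eqref{prop_phi1} with $u_i u_j$, the second and third lines are non-negative (they are of the form $\text{positive}\times(v\cdot\vud)^2$), the fourth line is antisymmetric and drops out, and the fifth line is negative but pointwise dominated by the second and third exactly as in your divergence argument. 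Together with the bounded first line this yields
\[
\vrd(\vud\otimes\vud):\Grad\vcg{\varphi}^1 \;\geq\; -\,C\,\vrd|\vud|^2 \qquad\text{pointwise},
\]
which is what delivers $-\int_\Omega \vrd(\vud\otimes\vud):\Grad\vcg{\varphi}^1\,\dx \leq C\|\vrd|\vud|^2\|_1$. You should replace the appeal to nonexistent pointwise bounds by this sign-structure argument; otherwise the step fails as written.
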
 
Now we come to the most delicate part of the estimate. 
Notice that in Lemma \ref{lem_int} the ball is separated from the boundary,
therefore we have to treat separately the case of $x_0 \in \Omega$ which 
is close to the boundary. 
This gap was not commented in the original papers, here we fill it using  
a carefully chosen test function vanishing at the boundary, which enables 
us to reach with the ball up to the boundary. Precisely, we show the following 
\begin{lem} \label{lem_close}
Assume that $x_0 \in \Omega$ is such that ${\rm dist}\{x_0,\partial\Omega\}=5\ep$
for some $0<\ep\ll 1$ and $\alpha<\frac{9m-6}{9m-2}$. Then 
\begin{equation} \label{est_close}
\intO{\frac{\pi(\vrd,\vtd)+\delta(\vrd^\beta+\vrd^2)}{|x-x_0|^\alpha}}
\leq C\big(1+ \|\pi(\vrd,\vtd)\|_1 + \|\vud\|_{1,2}(1+\|\vtd\|_{3m}) + \|\vrd|\vud|^2\|_1\big).
\end{equation}
\end{lem}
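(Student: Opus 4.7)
The plan is to deduce \eqref{est_close} from the two previous lemmas via a careful geometric decomposition of $\Omega$ around $x_0$ and its nearest boundary projection $x_0^*\in\partial\Omega$, for which $|x_0-x_0^*|=d(x_0)=5\varepsilon$. The decisive observation is that Lemma~\ref{lem_bdr} applied at $x_0^*$ yields not only the weighted bound but, since $|z-x_0^*|^{-\alpha}\geq r^{-\alpha}$ on $B_r(x_0^*)$, the scale-invariant shell estimate
$$
\int_{B_r(x_0^*)\cap\Omega}\pi\,\dx \;\leq\; C r^\alpha,\qquad r\leq R_0,
$$
and analogously for the $\delta$-terms. This expresses the fact that the pressure mass in a thin boundary layer decays like $r^\alpha$.

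I will apply Lemma~\ref{lem_int} at $x_0$ with $R_0=\varepsilon$ (permitted since $\varepsilon<d(x_0)/3$) to bound the inner ball $B_\varepsilon(x_0)$. The remainder $\Omega\setminus B_\varepsilon(x_0)$ splits into three pieces. On the annulus $\{\varepsilon\leq|x-x_0|\leq 10\varepsilon\}\cap\Omega\subset B_{15\varepsilon}(x_0^*)$, I use $|x-x_0|^{-\alpha}\leq\varepsilon^{-\alpha}$ and the shell estimate to obtain a uniform bound $\varepsilon^{-\alpha}\cdot C(15\varepsilon)^\alpha=C$. On $\{|x-x_0|>10\varepsilon\}\cap B_{R_0}(x_0^*)$, the triangle inequality gives $|x-x_0^*|\geq|x-x_0|-5\varepsilon\geq\tfrac12|x-x_0|$, hence $|x-x_0|^{-\alpha}\leq 2^\alpha|x-x_0^*|^{-\alpha}$ and Lemma~\ref{lem_bdr} closes the estimate. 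Finally on $\Omega\setminus B_{R_0}(x_0^*)$ both $|x-x_0|$ and $|x-x_0^*|$ are bounded below by a fixed positive constant, so $\|\pi\|_{L^1(\Omega)}\leq C$ suffices. Summing the four contributions yields \eqref{est_close}, and the same argument applies term-by-term to the $\delta(\vrd^\beta+\vrd^2)$-terms.

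The main technical obstacle is verifying that the constant in Lemma~\ref{lem_int} is genuinely uniform in $R_0$ as $R_0\to 0$. The test function $\vcg{\varphi}^0=(x-x_0)|x-x_0|^{-\alpha}\tau^2$ has $|\nabla\tau|\leq C/R_0$, producing cutoff-annulus terms of the form $(C/R_0)\int_{B_{2R_0}\setminus B_{R_0}}\pi$, which would blow up if $\pi$ were merely in $L^1$. Lemma~\ref{lem1}, however, provides $\pi\in L^s$ with $s$ up to $6m/(2+3m)$, and by H\"older, $\int_{B_{2R_0}}\pi\leq CR_0^{3(1-1/s)}$, so the cutoff contribution scales as $R_0^{2-3/s-\alpha}$, which is nonnegative exactly when $s\geq 3/(3-\alpha)$. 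Comparing this threshold against the admissible upper bound $\alpha<(9m-6)/(9m-2)$ reduces, after a short computation, to the trivial inequality $(3m-2)^2\geq 0$; hence the required integrability is available under the hypotheses of Theorem~\ref{t1}. The same mechanism, applied to $\vr|\vu|^2\in L^s$ and $\tn{S}\in L^q$ from the \emph{a priori} estimates \eqref{8.14}--\eqref{vt_3m}, handles the convective and viscous cutoff contributions.

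Alternatively, and more in line with the phrasing of the hint, one can realize the same decomposition through a single test function $\vcg{\varphi}(x)=\eta(x)(x-x_0)|x-x_0|^{-\alpha}$ with $\eta\in C^\infty(\Ov\Omega)$, $\eta\equiv 1$ on $B_{2\varepsilon}(x_0)\subset\Omega$, $\eta\equiv 0$ on $\partial\Omega$, $|\nabla\eta|\leq C/\varepsilon$; the contribution of $\nabla\eta\cdot(x-x_0)|x-x_0|^{-\alpha}$ on the boundary layer, which carries the $1/\varepsilon$ factor, is absorbed through a covering of the support of $\nabla\eta$ by balls $B_\varepsilon(y_i)$ centred at $y_i\in\partial\Omega$ with bounded multiplicity, using the same shell estimate ball-by-ball.
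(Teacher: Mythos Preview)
Your geometric decomposition (steps 2--4) is a natural idea, and the shell estimate $\int_{B_r(x_0^*)\cap\Omega}\pi\leq Cr^\alpha\cdot(\text{RHS})$ derived from Lemma~\ref{lem_bdr} is correct and does the job there. The real difficulty is step~1, and here your argument has a gap.

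You try to make the constant in Lemma~\ref{lem_int} uniform in $R_0$ by invoking $\pi\in L^s$ and $\vrd|\vud|^2\in L^s$ from Lemma~\ref{lem1}. But Lemma~\ref{lem1} does \emph{not} give absolute bounds on these norms: its right-hand side is $C(1+A^{(4s-3)/(3b-2)})$, where $A=\int\vrd^b|\vud|^2$. The whole purpose of the local pressure estimates (Lemmas~\ref{lem_int}--\ref{lem_close}) is to feed into Lemma~\ref{lem_A} and thereby bound $A$; invoking the $L^s$ control of $\pi$ and $\vrd|\vud|^2$ at this stage is circular. Even if you attempt to close the loop, you do not recover the statement of Lemma~\ref{lem_close} as written (your right-hand side contains $\|\pi\|_s$ rather than $\|\pi\|_1$), and you would have to re-check the exponent bookkeeping in Lemma~\ref{l 3.8} under additional constraints. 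A second, more subtle point: even if you replace the $L^s$ argument by your own shell estimate at $x_0^*$, this handles the \emph{pressure} cutoff contribution $R_0^{-\alpha}\int_{\text{annulus}}\pi$, but the \emph{convective} cutoff term $R_0^{-\alpha}\int_{\text{annulus}}\vrd|\vud|^2$ is not controlled by Lemma~\ref{lem_bdr}, which only yields a weighted bound on the normal component $\vrd(\vud\cdot\nabla d)^2$, not on the full $\vrd|\vud|^2$.

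The paper takes a different route precisely to avoid this obstacle. It constructs an explicit auxiliary function $\vcg{\varphi}^2$ that coincides (up to a constant) with $(x-x_0)|x-x_0|^{-\alpha}$ in $B_\varepsilon(x_0)$, vanishes on $\partial\Omega$, and has $W^{1,q}$ norm bounded uniformly in $\varepsilon$. Testing with $K\vcg{\varphi}^1+\vcg{\varphi}^2$ (with $K$ large), the only bad term produced by $\vcg{\varphi}^2$ is of the form $\int_{\{d<\varepsilon\}}\vrd(\vud\cdot\nabla d)^2(d+|x-x_0|^a)^{-\alpha}$, i.e.\ exactly the good (positive-sign) convective term coming from $\vcg{\varphi}^1$; hence it is absorbed. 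This coupling of the two test functions is the key mechanism, and it is what your black-box use of Lemmas~\ref{lem_int} and~\ref{lem_bdr} cannot reproduce.
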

\begin{proof}
We use again the function $\vcg{\varphi}^1$ defined in \eqref{phi1}. 
From \eqref{prop_phi1} we see that  
\begin{equation} \label{e1}
\intO{\vrd(\vud\otimes\vud):\nabla\vphi^1}\geq 
C_1 \intO{ \frac{\vrd(\vud\cdot\nabla d)^2}{(d(x)+|x-x_0|^a)^\alpha} }
-C_2\intO{ \vrd|\vud|^2 }
\end{equation}
and
\begin{multline} \label{e2}
\intO{ [\pi(\vrd,\vtd)+\delta(\vrd^\beta+\vrd^2)]\Div\, \vphi^1 } \geq 
C_1 \intO{ \frac{\pi(\vrd,\vtd)+\delta(\vrd^\beta+\vrd^2)}{(d(x)+|x-x_0|^a)^\alpha} } \\
-C_2 \intO{ \big(\pi(\vrd,\vtd)+\delta(\vrd^\beta+\vrd^2)\big) }.
\end{multline} 
The form of $\nabla \vphi^1$ in \eqref{prop_phi1} imply for $q<\frac{3-\alpha}{\alpha}$ that
$\|\vphi^1\|_{1,q} \leq C$ independently of the distance from the boundary.
However, we have 
$$
\frac{1}{(d(x)+|x-x_0|^a)^\alpha} \geq \frac{C}{|x-x_0|^\alpha}
$$
only for $x\in\Omega\setminus B_\ep(x_0)$.
Therefore \eqref{e1} and \eqref{e2} does not provide estimate for $\frac{\pi}{|x-x_0|^\alpha}$
in $B_\ep(x_0)$ where we need an additional estimate. 
To this end we introduce additional function which behaves 
like $\vphi^0$ defined in \eqref{def_phizero}, but additionally vanishes on the boundary. 
To combine these requirements we define it in a following way:
\begin{equation}
\vphi^2(x)=\left\{ 
\begin{array}{lr}
\frac{x-x_0}{|x-x_0|^\alpha}\Big(1-\frac{1}{2^\frac{\alpha}{2}}\Big), & |x-x_0|<\ep,\\
(x-x_0)\Big(\frac{1}{|x-x_0|^\frac{\alpha}{2}}-\frac{1}{(|x-x_0|+\ep)^\frac{\alpha}{2}}\Big), & |x-x_0|>\ep,d(x)>\ep,\\ 
(x-x_0)\Big(\frac{1}{|x-x_0|^\frac{\alpha}{2}}-\frac{1}{(|x-x_0|+d(x))^\frac{\alpha}{2}}\Big), & |x-x_0|>\ep,d(x)\leq\ep.
\end{array} \right.
\end{equation}
First of all, we easily verify that 
$$
\vphi^2 \in W_0^{1,q}(\Omega) \quad \textrm{for all} \quad 1 \leq q < \frac{3}{\alpha}
$$ 
with the norm bounded independently of $\ep$. Indeed, the singularity in $\vphi^2$
and its derivatives appears only in $B_\ep(x_0)$, where we have 
\begin{equation} \label{phi2}
\nabla \vphi^2 \sim \nabla \frac{x-x_0}{|x-x_0|^\alpha} \sim \frac{1}{|x-x_0|^\alpha},
\end{equation} 
which yields the above limitation on $q$. 
Now we can verify that
\begin{multline} \label{e3}
\intO{\vrd(\vud\otimes\vud):\nabla\vphi^2}\geq 
K_1 \int_{B_\ep(x_0)}\frac{\vrd|\vud|^2}{|x-x_0|^\alpha}\dx\\
-K_2\int_{\{ d(x)<\ep \}}\frac{\vrd(\vud\cdot\nabla d)^2}{(d(x)+|x-x_0|^a)^\alpha}\dx
-K_3\intO{\vrd|\vud|^2}  
\end{multline}
and
\begin{multline} \label{e4}
\intO{\big(\pi(\vrd,\vtd)+\delta(\vrd^\beta+\vrd^2)\big)\Div\,\vphi^2}\geq
K_1 \int_{B_\ep(x_0)}\frac{\pi(\vrd,\vtd)+\delta(\vrd^\beta+\vrd^2)}{|x-x_0|^\alpha}\dx\\
-K_3\intO{\big(\pi(\vrd,\vtd)+\delta(\vrd^\beta+\vrd^2)\big)}.
\end{multline}
Notice that the first term on the rhs of \eqref{e4} is exactly the one which we were 
missing (the estimate outside $B_\ep(x_0)$ is given by \eqref{e2}). After all these 
considerations we can test \eqref{aprox_mom_3} with 
$$
\vphi = K\vphi^1+\vphi^2
$$
where $K$ is a sufficiently large constant.  
Then the first term on the rhs of \eqref{e1} which has a good sign compensates 
the second term on the rhs of \eqref{e3} and we conclude \eqref{est_close}
provided $\alpha<\frac{9m-6}{9m-2}$ which completes the proof.
\end{proof}

Combining Lemmas \ref{lem_int}--\ref{lem_close} we conclude 
\begin{prop}
Assume $\alpha<\frac{9m-6}{9m-2}$. Then
\begin{multline} \label{pres_final}
{\rm sup}_{x_0\in\overline\Omega}\intO{\frac{\pi(\vrd,\vtd)+\delta(\vrd^\beta+\vrd^2)}{|x-x_0|^\alpha}}\\
\leq C\big(1+ \|\pi(\vrd,\vtd)\|_1 + \|\vud\|_{1,2}(1+\|\vtd\|_{3m}) + \|\vrd|\vud|^2\|_1\big).
\end{multline}
\end{prop}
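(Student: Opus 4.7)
The strategy is to combine Lemmas \ref{lem_int}--\ref{lem_close} through a case analysis on $d_0 := {\rm dist}(x_0,\partial\Omega)$, with a uniformly chosen radius $R_0>0$. First I would fix $R_0$ small enough that the smallness requirements of both Lemmas \ref{lem_bdr} and \ref{lem_close} are met uniformly in $x_0$, and in particular so that any point with $d_0<3R_0$ falls into the regime admissible for Lemma \ref{lem_close}. Then for each $x_0\in\overline{\Omega}$ I would split into three exhaustive cases.

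If $d_0\geq 3R_0$, Lemma \ref{lem_int} bounds the integral over $B_{R_0}(x_0)$. The complementary contribution is at most $R_0^{-\alpha}\int_{\Omega}(\pi(\vrd,\vtd)+\delta(\vrd^\beta+\vrd^2))\dx$; the pressure part is literally on the right-hand side of \eqref{pres_final}, and the $\delta$-dependent terms are controlled uniformly in $\delta$ via Lemma \ref{lem1} (taking $s$ slightly above $1$) together with the Bogovskii bound $\delta\|\vrd\|_{\beta+1}^{\beta-3/2}\leq C$ established before. If $x_0\in\partial\Omega$, Lemma \ref{lem_bdr} gives the bound on $B_{R_0}(x_0)\cap\Omega$ and the outside piece is handled identically. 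If $0<d_0<3R_0$, I set $\epsilon:=d_0/5$ so that Lemma \ref{lem_close} applies directly; this lemma already delivers the bound over the full $\Omega$, so no additional splitting is needed.

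Taking the supremum over $x_0\in\overline{\Omega}$ and noting that all three cases produce the same right-hand-side structure yields \eqref{pres_final}. The consistency of the restrictions on $\alpha$ is immediate: for $m>2/3$ a short computation shows $(9m-6)/(9m-2) < \min\{(3m-2)/(2m),1\}$, so the hypothesis $\alpha<(9m-6)/(9m-2)$ is the binding one and implies the assumptions of all three lemmas simultaneously.

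The only point genuinely requiring care is the uniformity of the constants in $x_0$, and the hard part will be controlling the close-to-boundary case. In Lemma \ref{lem_close} one must verify that the constant does not degenerate as $\epsilon\to 0$: this is automatic from the construction of $\vcg{\varphi}^2$, whose $W^{1,q}$-norm is bounded independently of $\epsilon$ for $q<3/\alpha$, and from the fact that the negative boundary-layer term in \eqref{e3} is absorbed by the positive contribution from $K\vcg{\varphi}^1$ in \eqref{e1} once $K$ is chosen sufficiently large. No step of the argument requires any new estimate beyond those already established in the three lemmas.
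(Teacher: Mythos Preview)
Your approach is exactly the one the paper has in mind---the paper's own proof is literally the single sentence ``Combining Lemmas \ref{lem_int}--\ref{lem_close} we conclude \eqref{pres_final}''---and your case analysis makes explicit precisely what that sentence means.

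There is, however, one step that does not work as you have written it. In cases 1 and 2 you bound the contribution on $\Omega\setminus B_{R_0}(x_0)$ by $R_0^{-\alpha}\int_\Omega\big(\pi(\vrd,\vtd)+\delta(\vrd^\beta+\vrd^2)\big)\dx$, and you then claim that $\delta\int_\Omega(\vrd^\beta+\vrd^2)\dx$ is controlled uniformly in $\delta$ by Lemma~\ref{lem1} together with the Bogovskii bound $\delta\|\vrd\|_{\beta+1}^{\beta-3/2}\leq C$. Neither of these gives what you need at this stage. Lemma~\ref{lem1} bounds $\delta\int_\Omega\vrd^{\beta+(s-1)\gamma}\dx$ by $C(1+A^{(4s-3)/(3b-2)})$ with $A=\int_\Omega\vrd^b|\vud|^2\dx$ for some $b>1$ (necessarily $b>1$ to allow $s>1$); but $A$ is not yet known to be bounded, nor is it dominated by $\|\vrd|\vud|^2\|_1$ on the right-hand side of \eqref{pres_final}---indeed, bounding $A$ is the whole purpose of Lemma~\ref{lem_A}, which comes \emph{after} the Proposition and uses \eqref{pres_final} in its proof. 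The Bogovskii bound alone only yields $\delta\int_\Omega\vrd^\beta\dx\leq C\delta^{-3/(2\beta-3)}$, which blows up.

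The correct (and simple) fix is a finite covering argument using the very same three lemmas: pick finitely many points $y_j\in\overline\Omega$ so that the balls $B_{R_0}(y_j)$ cover $\overline\Omega$, apply the appropriate lemma at each $y_j$, and use $|x-y_j|^{-\alpha}\geq R_0^{-\alpha}$ on $B_{R_0}(y_j)$ to conclude $\int_\Omega\big(\pi(\vrd,\vtd)+\delta(\vrd^\beta+\vrd^2)\big)\dx\leq C(\text{RHS of \eqref{pres_final}})$. This is non-circular (the lemmas precede the Proposition) and closes the gap. With this adjustment your argument is complete.
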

Using the above pressure estimate we show
\begin{lem} \label{lem_A}
Let $1\leq b <\gamma$, $\alpha < \frac{9m-6}{9m-2}$ and $\alpha > \frac{3b-2\gamma}{b}$. Then
\begin{equation} \label{est_A}
A = \intO{\vrd^b |\vud|^2}  \leq C \|\vud\|_{1,2}^2 \big(1+ \|\pi(\vrd,\vtd)\|_1 +  \|\vud\|_{1,2}(1+\|\vtd\|_{3m}) + \|\vrd |\vud|^2\|_{1}\big)^{\frac{b}{\gamma}}.
\end{equation}
\end{lem}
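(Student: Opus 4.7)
The plan is to factor $\vrd^b |\vud|^2$ so that the weighted pressure estimate from the preceding proposition can be applied via H\"older's inequality, with the leftover piece involving $|\vud|$ controlled by the Sobolev embedding. Fix any $x_0 \in \overline\Omega$; I would first write
\[
\vrd^b\,|\vud|^2 \;=\; \Big(\frac{\vrd^\gamma}{|x-x_0|^\alpha}\Big)^{b/\gamma}\;\cdot\;|\vud|^2\,|x-x_0|^{\alpha b/\gamma}
\]
and apply H\"older's inequality with conjugate exponents $\gamma/b$ and $\gamma/(\gamma-b)$. The first factor is then directly controlled by $(C\Phi)^{b/\gamma}$ thanks to \eqref{pres_final}, where $\Phi$ stands for the bracketed expression on the right-hand side of \eqref{est_A}. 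What remains is to estimate
\[
J := \Big(\int_\Omega |\vud|^{\frac{2\gamma}{\gamma-b}}\,|x-x_0|^{\frac{\alpha b}{\gamma-b}}\,dx\Big)^{(\gamma-b)/\gamma}
\]
by $C\|\vud\|_{1,2}^{2}$; combined with the pressure bound this will produce \eqref{est_A}.

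If $b \leq 2\gamma/3$ then the $|\vud|$-exponent in $J$ does not exceed $6$, and the Sobolev embedding $W^{1,2}_0(\Omega) \hookrightarrow L^6(\Omega)$ together with the trivial bound $|x-x_0|^{\alpha b/(\gamma-b)} \leq (\mathrm{diam}\,\Omega)^{\alpha b/(\gamma-b)}$ closes the estimate at once, even without using the hypothesis on $\alpha$ (which in this regime is automatic). The hard part will be the regime $b > 2\gamma/3$, where $2\gamma/(\gamma-b) > 6$ and plain Sobolev is insufficient; there the positive weight $|x-x_0|^{\alpha b/(\gamma-b)}$ has to be used genuinely rather than merely discarded. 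The natural idea is to perform a secondary H\"older splitting of $J$ combining the Sobolev embedding with a Hardy--Sobolev type inequality on $W^{1,2}_0(\Omega)$; the admissibility of the exponents in this splitting reduces, after a short computation, precisely to the inequality $b(3-\alpha) < 2\gamma$, which is the hypothesis $\alpha > (3b-2\gamma)/b$. The other assumption $\alpha < (9m-6)/(9m-2)$ only intervenes through the previous proposition, since it is what makes \eqref{pres_final} available in the first place.
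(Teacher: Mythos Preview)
Your H\"older splitting is correct, and for $b\leq 2\gamma/3$ the argument closes exactly as you say. The gap is in the regime $b>2\gamma/3$. The weight $|x-x_0|^{\alpha b/(\gamma-b)}$ has a \emph{positive} exponent (since $\alpha>0$ and $b<\gamma$), so on the bounded domain $\Omega$ it is simply a bounded function vanishing at the single point $x_0$. Such a weight cannot buy any integrability for $|\vud|$ beyond what Sobolev already gives: if an inequality of the form
\[
\int_\Omega |\vud|^{q}\,|x-x_0|^{\beta}\,dx \;\leq\; C\,\|\vud\|_{1,2}^{q}
\]
held for some $q>6$ and $\beta>0$ uniformly in $\vud\in W^{1,2}_0(\Omega)$, then by testing with functions supported away from $x_0$ (where $|x-x_0|^\beta\geq c>0$) one would recover the false embedding $W^{1,2}_0\hookrightarrow L^q$. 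No Hardy--Sobolev or Caffarelli--Kohn--Nirenberg inequality with an unweighted gradient goes above the critical exponent $6$; the ``secondary H\"older splitting'' you sketch therefore cannot exist.

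The paper's argument is genuinely different and avoids any $L^q$ control of $\vud$ with $q>6$. It first interpolates the weighted pressure bound to obtain
\[
\sup_{x_0\in\Omega}\int_\Omega \frac{\vrd^b}{|x-x_0|}\,dx \;\leq\; C\,\Phi^{b/\gamma},
\]
(this is where the hypothesis $\alpha>(3b-2\gamma)/b$ enters, via H\"older against the integrable singularity $|x-x_0|^{(\alpha b/\gamma-1)\gamma/(\gamma-b)}$). Then it introduces the Newtonian potential: solve $-\Delta h=\vrd^b$ in $\Omega$, $h|_{\partial\Omega}=0$, so that the Green's function representation gives $\|h\|_\infty\leq C\,\Phi^{b/\gamma}$. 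Finally,
\[
A=\int_\Omega \vrd^b|\vud|^2\,dx=-\int_\Omega \Delta h\,|\vud|^2\,dx
\;\leq\; C\,\|\nabla\vud\|_2\Big(\int_\Omega |\vud|^2|\nabla h|^2\,dx\Big)^{1/2},
\]
and a further integration by parts in the last integral, using only $\|h\|_\infty$, $\|\nabla\vud\|_2$ and $A$ itself, closes the estimate. The potential-theoretic detour is precisely what lets one trade the unattainable high integrability of $\vud$ for the $L^\infty$ bound on $h$.
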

This is exactly Lemma 3.7 from \cite{NoPo2}, however we sketch the proof here to show 
the application of \eqref{pres_final} which is not evident.  
First using interpolation inequality we show 
$$ 
\intO {\frac{\vrd^b}{|x-x_0|}} \leq
C \Big(1+ \|\pi(\vrd,\vtd)\|_1 + \|\vud\|_{1,2} (1+ \|\vtd\|_{3m}) + \|\vrd |\vud|^2\|_1 \Big)^{\frac b\gamma}.
$$
Next we introduce $h$ as a solution to $\Delta h = \vrd^b$, $h|_{\partial \Omega}=0$,
and represent it with the Green function to obtain 
\begin{equation*} \label{3.26}
\|h\|_\infty \leq C \sup_{x_0 \in \Omega} \intO {\frac{\vrd^b(x)}{|x-x_0|}}.
\end{equation*}
The definition of $h$ yields 
\begin{equation*} \label{3.27}
A \leq C \|\nabla\vud\|_2 \Big(\intO{|\vud|^2 |\nabla h|^2}\Big)^{\frac 12},
\end{equation*}
and integrating by parts the last integral we get \eqref{est_A}.   
We are now ready to show the following 
\begin{lem} \label{l 3.8}
Let $\gamma>1$, $m>\frac 23$ and $m > \frac{2}{9} \frac \gamma{\gamma-1}$. Then there exists $s>1$ such that $\vrd$ is bounded in $L^{s\gamma}(\Omega)$ and $\pi(\vrd,\vtd)$, $\vrd |\vud|$ and $\vrd |\vud|^2$ are bounded in $L^s(\Omega)$. Moreover, if $\gamma >\frac 43$ and $m>1$ for $\gamma \geq \frac{12}{7}$ and $m>\frac{2\gamma}{3(3\gamma-4)}$ for $\gamma \in (\frac 43,\frac{12}{7})$, we can take $s>\frac 65$.
\end{lem}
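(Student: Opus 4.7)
The strategy is to close a bootstrap loop between the Bogovskii-type pressure-moment estimate of Lemma~\ref{lem1}, which controls norms of $\vrd$ in terms of $A=\int_\Omega\vrd^b|\vud|^2\dx$, and the local pressure estimate \eqref{est_A} of Lemma~\ref{lem_A}, which controls $A$ in terms of those same norms. I fix a parameter $b$ just above $1$ (or just above $\frac{4}{3}$ for the weak-solution refinement) and verify that the hypotheses on $\gamma$ and $m$ make all required inequalities simultaneously satisfiable.

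\emph{Admissibility of Lemma~\ref{lem_A}.} An exponent $\alpha$ satisfying both $\alpha<\frac{9m-6}{9m-2}$ and $\alpha>\frac{3b-2\gamma}{b}$ exists if and only if $b<\frac{\gamma(9m-2)}{9m}$ (using $m>\tfrac{2}{3}$). Requiring this interval to admit values of $b$ just above $1$ is equivalent to $m>\frac{2\gamma}{9(\gamma-1)}$, which is the first hypothesis of the lemma; requiring it to admit $b$ just above $\frac{4}{3}$ (needed for the weak case) gives $m>\frac{2\gamma}{3(3\gamma-4)}$ for $\gamma>\frac{4}{3}$, matching the weak-solution hypothesis and reducing to $m>1$ precisely when $\gamma\geq\frac{12}{7}$.

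\emph{Bootstrap closure.} Applying Lemma~\ref{lem1} with $s=1$ gives $\|\pi(\vrd,\vtd)\|_1+\|\vrd|\vud|^2\|_1\leq C(1+A^{1/(3b-2)})$, and \eqref{vt_3m} gives $\|\vtd\|_{3m}\leq C(1+A^{1/(6b-4)})$, the latter being absorbed by the former since $6b-4>3b-2$. Combined with the uniform bound $\|\vud\|_{1,2}\leq C$ from \eqref{8.14} and substituted into \eqref{est_A}, this yields
\begin{equation*}
A\leq C\bigl(1+A^{1/(3b-2)}\bigr)^{b/\gamma},
\end{equation*}
which gives a uniform bound on $A$ exactly when $\frac{b}{\gamma(3b-2)}<1$, i.e.\ $b>\frac{2\gamma}{3\gamma-1}$. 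Since $\gamma>1$ forces $\frac{2\gamma}{3\gamma-1}<1$, any $b>1$ works; likewise $\gamma>\frac{4}{3}$ forces $\frac{2\gamma}{3\gamma-1}<\frac{4}{3}$, so any $b>\frac{4}{3}$ works in the weak case.

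\emph{Conclusion and main obstacle.} With $A$ bounded, every right-hand side in Lemma~\ref{lem1} is controlled, and the lemma with any $1<s\leq\min\bigl(\frac{3b}{b+2},\frac{6m}{3m+2}\bigr)$ --- a non-empty range because $b>1$ and $m>\frac{2}{3}$ --- delivers $\vrd\in L^{s\gamma}$ and $\pi(\vrd,\vtd),\vrd|\vud|^2\in L^s$; the $L^s$ bound on $\vrd|\vud|$ follows by H\"older using $\vud\in L^6$. For the weak-solution part, $b>\frac{4}{3}$ pushes $\frac{3b}{b+2}$ above $\frac{6}{5}$ and $m>1$ pushes $\frac{6m}{3m+2}$ above $\frac{6}{5}$, giving $s>\frac{6}{5}$. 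The main obstacle is the simultaneous compatibility of (i) the admissibility bound $b<\frac{\gamma(9m-2)}{9m}$, (ii) the bootstrap-closure bound $b>\frac{2\gamma}{3\gamma-1}$, and (iii) the threshold on $b$ required by Lemma~\ref{lem1} for $s>1$ (respectively $s>\frac{6}{5}$); tracking when this triple leaves a non-empty window of $b$ is precisely what dictates the sharp hypotheses on $(\gamma,m)$.
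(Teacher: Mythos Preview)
Your approach is essentially the paper's, and your admissibility analysis of the constraints on $b$ matches the sharp hypotheses exactly. Two small technical points are worth flagging.

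First, Lemma~\ref{lem1} is stated for $1<s$, and its proof (testing with a Bogovskii lift of $\vrd^{(s-1)\gamma}$ minus its average) degenerates at $s=1$. What you actually want is $s$ slightly above $1$: then $\|\vrd|\vud|^2\|_1\le\|\vrd|\vud|^2\|_s$ and $\|\vrd^\gamma\|_1\le(\int\vrd^{s\gamma})^{1/s}$ give exponents $(4s-3)/(s(3b-2))$, and for $\int\vrd\vtd$ one interpolates between $\int\vrd^{(s-1)\gamma+1}\vtd$ and $\int\vtd$ as the paper does, producing the second exponent $\frac{1}{6b-4}\big(1+\frac{8s-7}{(s-1)\gamma+1}\big)$. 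Both tend to $1/(3b-2)$ as $s\downarrow 1$, so your closure condition $\frac{b}{\gamma(3b-2)}<1$ is exactly the limiting form and is attained for $s$ close enough to $1$. Your two-step presentation---close the loop on $A$ near $s=1$, \emph{then} invoke Lemma~\ref{lem1} at the desired larger $s$---is in fact a clean simplification of the paper's argument, which keeps $s$ general throughout and carries both exponents.

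Second, the $L^s$ bound on $\vrd|\vud|$ does not follow from H\"older with $\vud\in L^6$ uniformly over the full range $\gamma>1$: that route requires $s\le \tfrac{6(\gamma-1)}{\gamma}$, which fails for $\gamma$ near $1$. The paper instead uses the Cauchy--Schwarz splitting $\|\vrd\vud\|_s\le\|\vrd\|_s^{1/2}\|\vrd|\vud|^2\|_s^{1/2}$, which works for every $\gamma>1$ once $\vrd\in L^{s\gamma}$ and $\vrd|\vud|^2\in L^s$ are in hand.
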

\begin{proof}
Interpolation inequality yields   
$$
\|\pi(\vrd,\vtd)\|_1 \leq C \Big(\Big(\intO {\vrd^{s\gamma}}\Big)^{\frac 1s}  +
 \Big(\intO{\vrd^{(s-1)\gamma}\pi(\vrd,\vtd)}\Big)^{\frac{1}{(s-1)\gamma +1}} \times \Big(\intO{\vtd}\Big)^{\frac{(s-1)\gamma}{(s-1)\gamma +1}}\Big).
$$
Therefore, combining Lemmas \ref{lem1} and \ref{lem_A} and applying \eqref{vt_3m}  we show
\begin{equation*} \label{3.29}
A \leq C \Big(1+ A^{\frac{4s-3}{3b-2} \frac 1s} + A^{\frac 1 {6b-4} (1+ \frac{8s-7}{(s-1)\gamma +1})} \Big)^{\frac b\gamma}.
\end{equation*}
In order to get the statement of the Lemma we need 
\begin{equation*} \label{3.30}
\frac{4s-3}{s} \frac{1}{3b-2} \frac{b}{\gamma} <1 \qquad \mbox{ and }  \qquad \frac 1 {6b-4} \Big(1+ \frac{8s-7}{(s-1)\gamma +1}\Big) \frac{b}{\gamma} <1
\end{equation*}
for a certain $s>1$ and $1\leq b <\gamma$. Collecting these and other assumptions 
from this section we get the statement for $\pi$ and $\vrd\vud$ and the result 
for $\vrd|\vud|^2$ follows from  
$$
\|\vrd \vud\|_s \leq \|\vrd\|_s^\frac 12 \|\vrd |\vud|^2\|_s^{\frac 12}.
$$
If we require $s>\frac 65$, we get more restrictions, see \cite{NoPo2} or \cite{MPZ_HB}.
\end{proof}
In order to pass to the limit in the total energy balance, we have to show that
\begin{equation} \label{3.31a}
\lim_{\delta \to 0^+} \delta \|\vrd\|_{\frac 65 \beta}^\beta = 0.
\end{equation}
To this end we use the Bogovskii-type estimates of the momentum equation \eqref{aprox_mom_3}
with  
$\vrd^{\frac{1}{5}\beta+\eta}$, $\eta>0$. Assuming additionally that 
$$
\vrd|\vud|^2 \quad \textrm{is bounded in some} \; L^q(\Omega), \; q>\frac 65,
$$  
we deduce (see \cite{NoPo2} or \cite{MPZ_HB} for details)  
$$
\delta \|\vrd\|_{\frac 65 \beta + \eta}^\beta \leq C
$$
for some $\eta >0$ which yields (\ref{3.31a}) due to interpolation of $L^{\frac 65 \beta}(\Omega)$ 
between $L^1(\Omega) $ and $L^{\frac 65 \beta + \eta}(\Omega)$.\\

\subsection{Limit passage}

\subsubsection{Limit passage based on a priori estimates}

Collecting the estimates obtained so far we have the following convergences
\begin{equation} \label{conv_delta}
\begin{array}{lll}
\displaystyle \vud \rightharpoonup \vu & \mbox{ in } W^{1,2}_0(\Omega), & \vud \to \vu \quad \mbox{ in } L^q(\Omega),\, q<6, \\
\vrd \rightharpoonup \vr & \displaystyle \mbox{ in } L^{s\gamma}(\Omega),  \, \gamma > 1, \, m> \frac 23, \displaystyle m>\frac{2}{9} \frac{\gamma}{\gamma-1},  \\
\vtd \rightharpoonup \vt & \displaystyle \mbox{ in } W^{1,r}(\Omega),\,  r = \min \Big\{2,\frac{3m}{m+1}\Big\}
, & \\
\vtd \to \vt & \mbox{ in } L^q(\Omega),\, q<3m, &  \vtd \to \vt \quad \mbox{ in } L^q(\partial \Omega),\, q< 2m,\\
\displaystyle \vcYd \rightharpoonup \vec{Y} & \mbox{ in } W^{1,2}(\Omega), & \vcYd \to \vec{Y} \quad \mbox{ in } L^q(\Omega),\,q<\infty. 
\end{array}
\end{equation}  
These allow to pass to the limit in the continuity equation, momentum equation,
species balance equations and entropy inequality to obtain 
\eq{\label{cont_lim} 
\intO{\vr \vu \cdot \Grad \psi} = 0
}
for all $\psi \in C^\infty(\Ov{\Omega})$, 
\begin{equation}\label{weak_mom_lim}
-\intOB{\vr\left(\vu\otimes\vu\right):\Grad\vcg{\phi}+\tn{S}:\Grad\vcg{\phi}}-\intO{\big(\vr\vt+\Ov{\vr^\gamma}\big)\Div\, \vcg{\phi}}=\intO{\vr \vc{f}\cdot\vcg{\phi}}
\end{equation}
for all $\vcg{\phi}\in C^\infty_0(\Omega)$,
\eq{\label{8.19}
- \intO{Y_{k}\vr\vu\cdot \Grad \psi} +\intO{Y_k \sum_{l=1}^n D_{kl}\Grad Y_l \cdot \Grad \psi}  
=  \intO{\vw_{k} \psi}
}
for all $\psi \in C^\infty(\Ov{\Omega})$, $k=1,2,\dots, n$,
and 
\begin{multline} \label{entropy_aprox_lim}
\int_{\Omega} \frac{\psi \tn {S} : \nabla \vu}{\vt}\dx
+\int \kappa \frac{|\nabla \vt|^2}{\vt^2}\psi \dx
-\int_{\Omega}\sumkN \psi(c_{pk}-c_{vk}\log\vt+\log Y_k)\omega_k\dx
\\
+\int_{\Omega}\psi \sum_{k,l=1}^n D_{kl}\nabla Y_k \nabla Y_l\dx
+ \int_{\partial \Omega} \frac{\psi}{\vt}L\vt_0 \, \dS \leq 
\int \frac{\kappa \nabla \vt \cdot \nabla \psi}{\vt} \dx
-\int_{\Omega} \overline{\vr s} \vu \cdot \nabla \psi \dx\\
-\int_{\Omega} \sumkN \vc{F}_k \cdot ( c_{vk} \log \vt-\log Y_k) \nabla \psi \dx  
+ \int_{\partial \Omega} \psi L \, \dS 
\end{multline}
for all non-negative $\psi \in C^\infty(\Ov{\Omega})$.
In order to pass in the species equations we need to assume 
$D_{kl}(\vt,\cdot) \leq C(1+ \vt^a)$ for $a <\frac{3m}{2}$,  
no further restrictions on $\gamma$ are needed. 

However, in order to pass in the total energy balance we need $s>\frac 65$ in \eqref{conv_delta}. 
This requirement combined with other assumptions from this section yields 
(see Lemma \ref{l 3.8})
$\gamma>\frac{4}{3}$ and
\begin{equation} \label{4.5}
\begin{array}{c}
\displaystyle m>1 \qquad \mbox{ for } \qquad \gamma \geq \frac{12}{7}, \\[8pt]
\displaystyle m> \frac{2\gamma}{3(3\gamma-4)} \qquad \mbox{ for } \qquad \gamma \in \Big(\frac 43,\frac {12}{7}\Big).
\end{array}
\end{equation}
Furthermore, we also need $a<\frac{3m-2}{2}$.
Under these restrictions we can pass to the limit also in the total energy balance to get
\eq{\label{8.20}
&-\intO{\Big[\vr\vt\sumkN c_{vk}Y_k+ \frac 12 \vr |\vu|^2 + \vr\vt +\frac{\gamma}{\gamma-1}\Ov{\vr^\gamma }\Big]\vu\cdot \Grad \psi}  
-\intO{\vS \vu \cdot \Grad \psi} \\
&+\intO{\kappa\Grad\vt\cdot\Grad \psi} + \int_{\partial \Omega}L(\vt-\vt_0)  \psi \ {\rm d}S 
\\
&+ \intO{\vt\sum_{k,l=1}^n c_{vk} Y_k   D_{kl}\Grad Y_l \cdot \Grad \psi} 
 = \intO{\vr \vc{f} \cdot \vu \psi}.
}

\subsubsection{Strong convergence of the density}
In order to finish the proof of Theorem \ref{t1}  we have to get rid of the 
weak limits in \eqref{cont_lim}--\eqref{8.20} denoted by bars. For this purpose we need to show that 
$$
\vrd \to \vr \quad \textrm{strongly in} \quad L^1(\Omega).
$$
Here we apply the techniques developed for compressible Navier--Stokes system 
which consist in testing the momentum equation with appropriately chosen test function 
leading to so called effective viscous flux identity. As the momentum equation 
is in our case essentially the same we can repeat this approach.
We skip the details as this is already standard in the theory of compressible flows, 
however for the sake of completeness we recall the main steps.  

\noindent {\bf Step 1. Effective viscous flux identity.}
Consider
$$
T_k(z) = k T\Big(\frac{z}{k}\Big), \qquad
T(z) = \left\{ \begin{array}{c}
z \mbox{ for } 0\leq z\leq 1, \\
\mbox{ concave on } (0,\infty), \\
2 \mbox{ for } z\geq 3.
\end{array}
\right.
$$  
Using as a test function $\zeta(x)\nabla\Delta^{-1}(1_\Omega T_k(\vrd))$ 
in the approximate momentum equation \eqref{aprox_mom_3}
and $\zeta(x)\nabla\Delta^{-1}(1_\Omega \overline{T_k(\vr)})$ in its limit version \eqref{weak_mom_lim}
with $\zeta(x)\in C^\infty_0(\Omega)$ we get the identity  
(for the proof see \cite{NoPo1}, Lemma 12 with $T_k(\vr)$ instead of $\vr$): 
\begin{equation} \label{vf1}
\begin{array}{c}
\displaystyle
\lim_{\delta \to 0^+} \intO {\zeta(x) \Big(\pi(\vrd,\vtd) T_k(\vrd) - \tn {S}(\vtd,\nabla\vud):{\cal R}[1_\Omega T_k(\vrd)]\Big) } \\
\displaystyle = \intO {\zeta(x) \Big(\overline{(\pi(\vr,\vt)} \ \overline{T_k(\vr)} -\tn{S}(\vt,\nabla\vu):{\cal R}[1_\Omega \overline{T_k(\vr)}]\Big)} \\
\displaystyle + \lim_{\delta \to 0^+} \intO {\zeta(x) \Big(T_k(\vrd) \vud \cdot {\cal R} [1_\Omega \vrd \vud] - \vrd (\vud\otimes \vud) : {\cal R}[1_\Omega T_k(\vrd)]\Big)} \\[8pt]
\displaystyle - \intO {\zeta(x) \Big(\overline{T_k(\vr)} \vu \cdot {\cal R}
[1_\Omega \vr \vu] - \vr (\vu\otimes \vu) : {\cal R}[1_\Omega
\overline{T_k(\vr)}]\Big)},
\end{array}
\end{equation}
where ${\cal R}$ denotes the double Riesz operator, $({\cal
R}[v])_{ij} = (\nabla\otimes\nabla\Delta^{-1})_{ij} v = {\cal
F}^{-1}\Big[\frac{\xi_i \xi_j}{|\xi|^2} {\cal F}(v)(\xi)\Big]$ with ${\cal F}$ the Fourier transform, and we used that $\Div (\vrd\vud) = \Div(\vr\vu)=0$. 
We recall some auxiliary results we will apply. 
The first one is (see \cite[Theorem 10.27]{FeNoB})
\begin{lem}[Commutators I] \label{lem_com1}
\medskip
Let $\vc{U}_\delta \rightharpoonup \vc{U}$ in
$L^p(\R^3)$, $v_\delta \rightharpoonup v$ in
$L^q(\R^3)$, where
$$
\frac 1p + \frac 1q = \frac 1s <1.
$$
Then
$$
v_\delta {\cal R}[\vc{U}_\delta] - {\cal
R}[{v}_\delta] \vc{U}_\delta \rightharpoonup v {\cal
R}[\vc{U}] - {\cal R}[{v}] \vc{U}
$$
in $L^s(\R^3)$.
\end{lem}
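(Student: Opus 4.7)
My plan is to reduce the weak-convergence claim, via duality, to a compactness property of the commutator of $\mathcal R$ with multiplication by a test function. Since $\mathcal R$ is bounded on every $L^r$ with $1<r<\infty$ by Calder\'on--Zygmund theory, H\"older's inequality (using $\frac1p+\frac1q=\frac1s$) shows that the sequence $v_\delta\mathcal R[\vc U_\delta]-\mathcal R[v_\delta]\vc U_\delta$ is bounded in $L^s(\R^3)$; the assumption $s>1$ is needed to place it in a reflexive space. Hence a subsequence converges weakly in $L^s$, and it suffices to identify the distributional limit against arbitrary $\varphi\in C_c^\infty(\R^3)$.

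\textbf{Key manipulation.} The symbol $\xi_i\xi_j/|\xi|^2$ of $\mathcal R$ is real and symmetric, so $\mathcal R$ is formally self-adjoint: $\int \mathcal R[f]\cdot g\,dx=\int f\cdot\mathcal R[g]\,dx$ whenever the pairing makes sense. Two applications give
\begin{equation*}
\int_{\R^3}\bigl(v_\delta\mathcal R[\vc U_\delta]-\mathcal R[v_\delta]\vc U_\delta\bigr)\varphi\,dx
=\int_{\R^3}\vc U_\delta\bigl(\mathcal R[v_\delta\varphi]-\varphi\,\mathcal R[v_\delta]\bigr)dx
=\int_{\R^3}\vc U_\delta\,[\mathcal R,\varphi]v_\delta\,dx,
\end{equation*}
where $[\mathcal R,\varphi]f:=\mathcal R[\varphi f]-\varphi\mathcal R[f]$. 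The problem is thereby reduced to passing to the limit in this single integral.

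\textbf{Compactness of the commutator.} The heart of the matter is that, for $\varphi\in C_c^\infty$, the commutator $[\mathcal R,\varphi]\colon L^q(\R^3)\to L^q(\R^3)$ is compact. Its Schwartz kernel is $(\varphi(y)-\varphi(x))K(x-y)$ with $K$ the Calder\'on--Zygmund kernel of $\mathcal R$; the Lipschitz factor $\varphi(y)-\varphi(x)$ cancels one order of the singularity, so the commutator effectively gains a derivative and is compact by Rellich--Kondrachov (alternatively, one invokes the Coifman--Rochberg--Weiss theorem on commutators with $BMO$ symbols). Therefore $[\mathcal R,\varphi]v_\delta\to[\mathcal R,\varphi]v$ strongly in $L^q(\R^3)$. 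Combining this strong convergence with $\vc U_\delta\rightharpoonup\vc U$ in $L^p$ via H\"older's inequality lets us pass to the limit; running the self-adjointness computation of the previous paragraph in reverse identifies the limit as $\int(v\mathcal R[\vc U]-\mathcal R[v]\vc U)\cdot\varphi\,dx$, as required.

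\textbf{Main obstacle.} The only nontrivial ingredient is the commutator compactness; the rest is bookkeeping. If one prefers to avoid the Coifman--Rochberg--Weiss machinery, an elementary alternative is to work with the potential $\vc A_\delta:=\nabla\Delta^{-1}\vc U_\delta$: by Calder\'on--Zygmund bounds on $\nabla\vc A_\delta=\mathcal R[\vc U_\delta]$ together with Sobolev embedding, $\vc A_\delta$ is precompact in $L^{p^*}_{\mathrm{loc}}$, and Rellich--Kondrachov furnishes strong local convergence, which, combined with an integration by parts against $\varphi$, delivers the same conclusion.
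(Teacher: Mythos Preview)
The paper does not prove this lemma at all: it simply quotes it from Feireisl--Novotn\'y, \emph{Singular Limits in Thermodynamics of Viscous Fluids}, Theorem~10.27. Your argument is therefore not a reconstruction of the paper's proof but an independent one, and it is essentially correct.

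A brief comparison. The proof in the cited reference is based on the div--curl lemma: one rewrites the bilinear expression $v_\delta\mathcal R[\vc U_\delta]-\mathcal R[v_\delta]\vc U_\delta$ so that, after pairing with a test function, it becomes a sum of products in which one factor has precompact divergence and the other precompact curl, and then invokes compensated compactness. Your route---moving $\varphi$ across by self-adjointness of $\mathcal R$ and exploiting that $[\mathcal R,\varphi]$ is a smoothing operator of order $-1$ for $\varphi\in C_c^\infty$---is a genuinely different mechanism. It has the advantage of making transparent \emph{why} the commutator structure helps (one order of the singular kernel is cancelled), and it connects the result directly to the Coifman--Rochberg--Weiss circle of ideas; the div--curl argument, on the other hand, avoids any regularity statement about $[\mathcal R,\varphi]$ and stays closer to the structure actually used downstream in the effective viscous flux argument.

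One small point to tighten. You invoke Rellich--Kondrachov to conclude that $[\mathcal R,\varphi]\colon L^q(\R^3)\to L^q(\R^3)$ is compact, but Rellich is a statement on bounded domains. What you actually need is strong convergence of $[\mathcal R,\varphi]v_\delta$ in $L^{p'}(\R^3)$ (note $p'<q$ since $s>1$). On any ball this follows from the smoothing estimate and Rellich; for the complement of a large ball containing $\operatorname{supp}\varphi$ you have the uniform pointwise bound $|[\mathcal R,\varphi]v_\delta(x)|\le C|x|^{-3}$ coming from the kernel representation, which makes the tails uniformly small in $L^{p'}$. With this remark (or, as you note, by restricting to the compactly supported sequences that actually occur in the application) the argument is complete.
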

\noindent 
The second is (see Theorem 10.28 in \cite{FeNoB}) 
\begin{lem}[Commutators II] \label{lem_com2}
\medskip
Let $w \in W^{1,r}(\R^3)$, $\vc{z} \in L^p(\R^3)$, $1<r<3$,
$1<p<\infty$, $\frac 1r + \frac 1p -\frac 13 <\frac 1s <1$. Then
for all such $s$ we have
$$
\|{\cal R}[w\vc{z}] - w{\cal R}[\vc{z}] \|_{a,s,\R^3} \leq C
\|w\|_{1,r,\R^3} \|\vc{z}\|_{p,\R^3},
$$
where $\frac a 3 = \frac 1s + \frac 13 -\frac 1p - \frac 1r$. Here, $\|\cdot\|_{a,s, \R^3}$ denotes the norm in
the Sobolev--Slobodetskii space $W^{a,s}(\R^3)$.
\end{lem}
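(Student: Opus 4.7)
\textbf{Proof proposal for Lemma \ref{lem_com2}.}
The plan is to view $\mathcal{R}=\nabla\otimes\nabla\Delta^{-1}$ as a Calder\'on--Zygmund operator with symbol $\xi_i\xi_j/|\xi|^2$ of order zero, so that formally $[\mathcal{R},w]$ should behave like an operator of order $-1$ gaining one derivative; the content of the lemma is the precise way this gain is shared between the regularity of $w$ and the integrability of $\vc z$. The clean framework is Bony's paradifferential / Littlewood--Paley calculus: I would first decompose
\[
w\vc z = T_w \vc z + T_{\vc z} w + R(w,\vc z),
\]
and then split the commutator as
\[
\mathcal{R}(w\vc z) - w\mathcal{R}(\vc z)
= \bigl(\mathcal{R}(T_w\vc z) - T_w\mathcal{R}(\vc z)\bigr)
+ \mathcal{R}(T_{\vc z} w) + \mathcal{R}(R(w,\vc z)) - (w-T_w)\mathcal{R}(\vc z).
\]

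The principal term is the first bracket. Since $T_w\vc z$ is frequency-localized near the high frequencies of $\vc z$, a Taylor expansion of the symbol of $\mathcal{R}$ around each dyadic block of $\vc z$ gives a remainder that carries one extra derivative of $w$; standard paradifferential estimates then yield
\[
\|\mathcal{R}(T_w\vc z)-T_w\mathcal{R}(\vc z)\|_{W^{a,s}(\R^3)}
\le C\,\|\nabla w\|_{L^r(\R^3)}\,\|\vc z\|_{L^p(\R^3)},
\]
with $a,s$ in the announced range. The remaining three pieces are estimated by the boundedness of $\mathcal{R}$ on the relevant Triebel--Lizorkin / Besov scales combined with paraproduct and H\"older inequalities: $T_{\vc z}w$ and $R(w,\vc z)$ benefit directly from the smoothness of $w$ via the Sobolev embedding $W^{1,r}\hookrightarrow L^{3r/(3-r)}$, while $(w-T_w)\mathcal{R}(\vc z)$ uses that $w-T_w$ is a paraproduct with $w$ on the low frequencies and so enjoys the same one-derivative gain.

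An alternative and more hands-on route is to start from the kernel representation
\[
[\mathcal{R},w]\vc z(x)=\mathrm{p.v.}\int_{\R^3} k(x-y)\bigl(w(y)-w(x)\bigr)\vc z(y)\,dy,
\]
bound $|w(y)-w(x)|$ by $|x-y|$ times a maximal function of $\nabla w$, and recognise the resulting kernel as a fractional integration of order one acting on a product; the Hardy--Littlewood--Sobolev inequality combined with H\"older then produces exactly the scaling relation $a/3 = 1/s + 1/3 - 1/p - 1/r$.

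I expect the main obstacle to be \emph{not} any single estimate but the bookkeeping of admissible indices: the gain of one full derivative is clean only on the high--low paraproduct piece $T_w\vc z$, whereas the high--high interaction $R(w,\vc z)$ and the low--high piece $T_{\vc z}w$ lose a bit of regularity at the endpoint. The hypothesis $1/r+1/p-1/3 < 1/s <1$ is precisely what is needed for all three pieces to land in a common Sobolev--Slobodetskii space $W^{a,s}(\R^3)$, and the fact that $1<r<3$ makes the Sobolev embedding of $w$ subcritical so that no logarithmic losses appear. Verifying this uniform placement in $W^{a,s}$ across all four pieces is the delicate part of the argument.
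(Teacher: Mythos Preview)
The paper does not prove this lemma at all: it is simply quoted as Theorem~10.28 of Feireisl--Novotn\'y, \emph{Singular Limits in Thermodynamics of Viscous Fluids}, and used as a black box in the effective viscous flux argument. So there is no ``paper's own proof'' to compare against; any self-contained argument you produce goes beyond what the authors present.

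That said, your sketch is a sound route to an actual proof. The Bony paraproduct decomposition is the standard modern way to establish such bilinear commutator estimates, and your identification of the high--low piece $\mathcal{R}(T_w\vc z)-T_w\mathcal{R}(\vc z)$ as the one carrying the genuine gain of a derivative (via a first-order Taylor expansion of the symbol $\xi_i\xi_j/|\xi|^2$ on each dyadic shell) is exactly right. Your alternative kernel route, exploiting the pointwise inequality $|w(x)-w(y)|\le C|x-y|\bigl(M|\nabla w|(x)+M|\nabla w|(y)\bigr)$ together with Hardy--Littlewood--Sobolev, is closer in spirit to the original Coifman--Rochberg--Weiss style arguments and also recovers the stated scaling relation. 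Either approach is adequate; the paraproduct version is cleaner for tracking the Sobolev--Slobodetskii index $a$, while the kernel version makes the role of the Riesz potential of order one more transparent. The index bookkeeping you flag as the delicate point is indeed the only place where care is needed, but the open condition $1/r+1/p-1/3<1/s<1$ gives enough room that no endpoint subtleties arise.
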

\noindent
Finally we have (\cite[Lemma 6]{JeNoPo}):
\begin{lem} \label{lem_JNP}
Let $\Omega$ bounded, $f_\delta \to f$ in $L^1(\Omega)$, $g_\delta \rightharpoonup g$ in $L^1(\Omega)$ and 
$f_\delta g_\delta \rightharpoonup h$ in $L^1(\Omega)$. Then $h=fg$.
\end{lem}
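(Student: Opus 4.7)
The plan is to show that $\int_\Omega h\,\varphi \dx = \int_\Omega fg\,\varphi \dx$ for every $\varphi \in L^\infty(\Omega)$, which identifies $h = fg$ almost everywhere. First, I would extract a (not relabelled) subsequence so that $f_\delta \to f$ a.e. in $\Omega$; this is possible thanks to the $L^1$ convergence. The Dunford--Pettis theorem applied to the weak $L^1$ convergences of $\{g_\delta\}$ and of $\{f_\delta g_\delta\}$ gives equi-integrability of both sequences, which is the key structural input.

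The main device is truncation at level $M>0$: writing $T_M(s)=s$ for $|s|\leq M$ and $T_M(s)=M\,{\rm sgn}(s)$ otherwise, decompose
\[
\int_\Omega f_\delta g_\delta \varphi \dx = \int_\Omega T_M(f_\delta) g_\delta \varphi \dx + \int_\Omega \bigl(f_\delta - T_M(f_\delta)\bigr) g_\delta \varphi \dx.
\]
For the bounded piece I would further split $T_M(f_\delta) = T_M(f) + [T_M(f_\delta)-T_M(f)]$. Since $T_M(f)\varphi \in L^\infty(\Omega)$, weak $L^1$ convergence of $g_\delta$ handles $\int T_M(f) g_\delta \varphi \dx \to \int T_M(f) g\,\varphi \dx$. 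The difference $T_M(f_\delta)-T_M(f)$ is dominated by $2M$ and tends to $0$ a.e., hence strongly in every $L^p$, $p<\infty$; combining this with the equi-integrability of $\{g_\delta\}$ (truncate $g_\delta$ at a high threshold, split the integral, and let the threshold tend to infinity) makes $\int [T_M(f_\delta)-T_M(f)]\, g_\delta \varphi \dx \to 0$.

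The remainder is controlled by $|f_\delta - T_M(f_\delta)| \leq |f_\delta|\,\mathbf{1}_{\{|f_\delta|>M\}}$ together with the Chebyshev estimate $|\{|f_\delta|>M\}| \leq \|f_\delta\|_1/M$, which goes to $0$ uniformly in $\delta$ as $M\to\infty$; the equi-integrability of $\{f_\delta g_\delta\}$ then implies that the remainder is $o(1)$ uniformly in $\delta$ as $M\to\infty$. Letting first $\delta\to 0$ and then $M\to\infty$ yields $\int h\,\varphi\dx = \lim_{M\to\infty} \int T_M(f)\, g\,\varphi\dx$, and monotone convergence (applied to $|fg|$, whose integrability is a by-product of the bound $\sup_\delta \|f_\delta g_\delta\|_1 <\infty$) finishes the identification $\int h\,\varphi\dx = \int fg\,\varphi\dx$. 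The main technical hurdle is precisely the cross-term $\int[T_M(f_\delta)-T_M(f)]\,g_\delta \varphi\dx$: $T_M(f_\delta)\to T_M(f)$ only strongly in $L^p$ with $p<\infty$, while $g_\delta$ only converges weakly in $L^1$, so a direct weak-strong pairing is unavailable and the Dunford--Pettis equi-integrability of $\{g_\delta\}$ must be invoked to cut off large values and close the argument.
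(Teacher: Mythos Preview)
The paper does not supply its own proof of this lemma; it is quoted verbatim from \cite[Lemma 6]{JeNoPo} and used as a black box. So there is nothing to compare your argument against in the present paper.

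Your argument is correct in substance. The Dunford--Pettis step, the truncation $T_M$, the splitting of the bounded piece, and the control of the large-$f_\delta$ tail via equi-integrability of $\{f_\delta g_\delta\}$ are all sound, and you correctly identify the cross term $\int [T_M(f_\delta)-T_M(f)]g_\delta\varphi$ as the delicate point and handle it by a further truncation of $g_\delta$.

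One small imprecision: the integrability of $fg$ is not an immediate ``by-product'' of $\sup_\delta\|f_\delta g_\delta\|_1<\infty$, since products do not commute with weak limits (this is exactly what the lemma is about). What you have actually shown is that $\lim_{M\to\infty}\int_\Omega T_M(f)\,g\,\varphi\,\dx=\int_\Omega h\,\varphi\,\dx$ for every $\varphi\in L^\infty(\Omega)$. Taking $\varphi=\mathbf{1}_{\{fg>0\}}$ and $\varphi=\mathbf{1}_{\{fg<0\}}$ makes $T_M(f)g\varphi$ nonnegative and monotone in $M$, so monotone convergence gives $\int_\Omega(fg)^\pm\dx<\infty$; then dominated convergence (with majorant $|fg|\,\|\varphi\|_\infty$) closes the identification for general $\varphi$. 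With this clarification your proof is complete.
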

\noindent
We have the following identity  
\begin{equation} \label{ir}
\vr(\vu \otimes \vu):{\cal R}[T_k(\vr)]=\sum_{i,j=1}^3u_i({\cal R}[T_k(\vr)])_{ij}\vr u_j),
\end{equation}
which is uniformly bounded in $L^p$ for $1\leq p\leq s$. 
Moreover, we have 
$$
\vrd\vud\rightharpoonup \vr\vu, \quad \vrd\vud \otimes \vud\rightharpoonup \vr\vu\otimes\vu \quad 
\textrm{in} \; L^1.
$$
Therefore, applying Lemma \ref{lem_com1} with
$$
\begin{array}{c}
\displaystyle
v_\delta = T_k(\vrd) \rightharpoonup \overline{T_k(\vr)} \qquad \mbox{ in } L^{q}(\R^3), \, q<\infty \mbox{ arbitrary } \\
\displaystyle \vc{U}_\delta = \vrd \vud \rightharpoonup \vr
\vu \qquad \mbox{ in } L^{p}(\R^3), \, \mbox{ for certain } p>1,
\end{array}
$$
and Lemma \ref{lem_JNP} with 
$$
f_\delta = T_k(\vrd){\cal R}[1_\Omega \vrd \vud]-\vrd\vud{\cal R}[T_k(\vrd)], \quad
g_\delta=\zeta \vud 
$$
we obtain 
$$
\begin{array}{c}
\displaystyle
\intO {\zeta(x) \vud \cdot \Big(T_k(\vrd)  {\cal R} [1_\Omega \vrd
\vud] - \vrd {\cal R}[1_\Omega T_k(\vrd)] \vud  \Big)} \\
\displaystyle \to \intO
{\zeta(x) \vu \cdot \Big(\overline{T_k(\vr)}  {\cal R} [1_\Omega \vr \vu] - \vr
{\cal R}[1_\Omega \overline{T_k(\vr)}] \vu  \Big)}.
\end{array}
$$
This convergence in view of \eqref{vf1} and \eqref{ir} gives 
\begin{equation} \label{vf2}
\begin{array}{c}
\lim_{\delta \to 0^+} \intO{\zeta(x) \Big(\pi(\vrd,\vtd) T_k(\vrd) - \tn{S}(\vtd,\nabla\vud):{\cal R}[1_\Omega T_k(\vrd)]\Big)} \\
\displaystyle 
= \intO{\zeta(x) \Big(\overline{(p(\vr,\vt)} \, \overline{T_k(\vr)} -\tn{S}(\vt,\nabla\vu):{\cal R}[1_\Omega \overline{T_k(\vr)}]\Big)}. 
\end{array}
\end{equation}
Next we can write 
\begin{equation} \label{vf3}
\begin{array}{c}
\displaystyle \intO {\zeta(x) \overline{\tn{S}(\vt,\nabla\vu) : {\cal R}[1_\Omega T_k(\vr)]}} = \lim_{\delta \to 0^+} \intO {\zeta(x) \Big(\frac 43 \mu(\vtd)+ \xi(\vtd)\Big) \Div \, \vud T_k(\vrd)} \\
\displaystyle + \lim_{\delta \to 0^+} \intO {T_k(\vtd) \Big({\cal R}:\Big[\zeta(x) \mu(\vtd) \big(\nabla \vud + (\nabla \vud)^T\big)\Big] \\
\displaystyle -\zeta(x) \mu(\vtd){\cal R}:\big[\nabla \vud+ (\nabla \vud)^T\big]\Big)},
\end{array}
\end{equation}
where ${\cal R}:\tn{A}:=\sum_{i,j=1}^3(\nabla\otimes\nabla\Delta^{-1})_{ij}\tn{A}_{ij}$
for a tensor valued function $\tn{A}$.
Applying Lemma \ref{lem_com2} 
to the second term in \eqref{vf3} we finally obtain the {\em effective viscous flux identity}:
\begin{equation} \label{vf_final}
\begin{array}{c}
\displaystyle
\overline{p(\vr,\vt) T_k(\vr)} - \Big(\frac 43 \mu(\vt) + \xi(\vt)\Big) \overline{T_k(\vr) \Div \,\vu} \\
\displaystyle =  \overline{p(\vr,\vt)} \,\, \overline{T_k(\vr)} - \Big(\frac 43 \mu(\vt) + \xi(\vt)\Big) \overline{T_k(\vr)} \Div \,\vu.
\end{array}
\end{equation}

\smallskip

\noindent {\bf Step 2. Renormalized continuity equation.}
In the next step we verify that $(\vr,\vu)$ satisfies the renormalized continuity equation. 
For this purpose we introduce the {\em oscillations defect measure}:
\begin{equation} \label{4.11a}
\mbox{{\bf osc}}_{\mathbf q} [\vrd\to\vr](Q) = \sup_{k>1} \Big(\limsup_{\delta \to 0^+} \int_Q |T_k(\vrd)-T_k(\vr)|^q \dx\Big).
\end{equation}
Applying \eqref{vf_final} we show (\cite[Lemma 4.5]{NoPo2}): 
\begin{lem} \label{l 7.4}
Let $(\vrd,\vud,\vtd)$ satisfy 
\begin{equation} \label{conv}
\begin{array}{c}
\vrd \rightharpoonup \vr \qquad \mbox{ in } L^1(\Omega), \\
\vud \rightharpoonup \vu \qquad \mbox{ in } L^r(\Omega), \\
\nabla \vud \rightharpoonup \nabla \vu \qquad \mbox{ in } L^r(\Omega), \quad r>1.
\end{array}
\end{equation}
Assume further that $m>\max\{\frac{2}{3(\gamma-1)},\frac 23\}$. 
Then there exists $q>2$ such that  
\begin{equation} \label{osc}
\mbox{{\bf osc}}_{\mathbf{q}} [\vrd\to\vr](\Omega) < \infty.
\end{equation}
Moreover,
\begin{equation} \label{lim1}
\limsup_{\delta \to 0^+} \intO{ \frac{1}{1+\vt}|T_k(\vrd) -T_k(\vr)|^{\gamma+1}} \leq \intO {\frac{1}{1+\vt} \Big(\overline{p(\vr,\vt)T_k(\vr)}  - \overline{p(\vr,\vt)} \, \, \overline{T_k(\vr)}\Big)}.
\end{equation}
\end{lem}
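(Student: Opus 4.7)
The plan is to combine the effective viscous flux identity \eqref{vf_final} with the monotonicity of the cold pressure $\vr\mapsto\vr^\gamma$ to obtain a weighted $L^{\gamma+1}$ bound on $T_k(\vrd)-T_k(\vr)$, and then interpolate against $\vt\in L^{3m}(\Omega)$ to upgrade it to an $L^q$ bound with some $q>2$.

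First I would rewrite \eqref{vf_final} in the form
\begin{equation*}
\overline{p(\vr,\vt)T_k(\vr)} - \overline{p(\vr,\vt)}\,\overline{T_k(\vr)} = \Big(\frac{4}{3}\mu(\vt)+\xi(\vt)\Big)\Big(\overline{T_k(\vr)\Div\vu} - \overline{T_k(\vr)}\Div\vu\Big),
\end{equation*}
decompose $p=\vr^\gamma+\vr\vt$, and use the elementary pointwise inequality
\begin{equation*}
(a^\gamma - b^\gamma)\big(T_k(a)-T_k(b)\big) \geq |T_k(a)-T_k(b)|^{\gamma+1}, \qquad a,b\geq 0,\; \gamma\geq 1,
\end{equation*}
which follows from $a^\gamma-b^\gamma\geq(a-b)^\gamma$ for $a\geq b\geq 0$ together with $T_k$ being non-decreasing and $1$-Lipschitz. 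Applied with $a=\vrd$, $b=\vr$, multiplied by $(1+\vt)^{-1}$ and integrated, this gives a lower bound on the expansion of the left-hand side. Passing $\delta\to 0^+$ using the strong convergence of $\vtd$, the weak convergence of $\vrd^\gamma$ and $T_k(\vrd)$, and the sign relation $(T_k(\vr)-\overline{T_k(\vr)})(\vr^\gamma-\overline{\vr^\gamma})\leq 0$ (which comes from the convexity of $x\mapsto x^\gamma$ and concavity of $T_k$ via weak lower semi-continuity), I obtain
\begin{equation*}
\limsup_{\delta\to 0^+}\int_\Omega\frac{|T_k(\vrd)-T_k(\vr)|^{\gamma+1}}{1+\vt}\dx \leq \int_\Omega\frac{1}{1+\vt}\Big(\overline{\vr^\gamma T_k(\vr)}-\overline{\vr^\gamma}\,\overline{T_k(\vr)}\Big)\dx.
\end{equation*}
Adding the non-negative molecular contribution $\vt\big(\overline{\vr T_k(\vr)}-\vr\,\overline{T_k(\vr)}\big)$ on the right gives exactly \eqref{lim1}. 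To see that the bound is uniform in $k$, I would use the identity above once more: since $\frac{4}{3}\mu(\vt)+\xi(\vt)\sim 1+\vt$ by \eqref{growth_mu}, the right-hand side reduces to $\int(\overline{T_k(\vr)\Div\vu}-\overline{T_k(\vr)}\Div\vu)\dx$ up to a bounded multiplier, which is controlled by $\|T_k(\vr)\|_{L^p}\|\Div\vu\|_{L^{p'}}$ via Hölder, and $T_k(\vr)\leq\vr$ combined with the pressure/velocity bounds from Lemma \ref{l 3.8} closes this independently of $k$.

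Finally, for \eqref{osc}, I would apply Hölder's inequality with conjugate exponents $\frac{\gamma+1}{q}$ and $\frac{\gamma+1}{\gamma+1-q}$:
\begin{equation*}
\int_\Omega|T_k(\vrd)-T_k(\vr)|^q\dx \leq \Big(\int_\Omega\frac{|T_k(\vrd)-T_k(\vr)|^{\gamma+1}}{1+\vt}\dx\Big)^{\frac{q}{\gamma+1}}\Big(\int_\Omega(1+\vt)^{\frac{q}{\gamma+1-q}}\dx\Big)^{\frac{\gamma+1-q}{\gamma+1}}.
\end{equation*}
The second factor is uniformly bounded as soon as $\frac{q}{\gamma+1-q}\leq 3m$, i.e.\ $q\leq\frac{3m(\gamma+1)}{1+3m}$, and this interval contains some $q>2$ precisely when $3m(\gamma-1)>2$, matching the hypothesis $m>\frac{2}{3(\gamma-1)}$. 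The main obstacle is the careful bookkeeping of the temperature weight $\frac{1}{1+\vt}$: the linear growth of $\mu$ in $\vt$ forces this weight into every step, and only the interpolation against $\vt\in L^{3m}$ produces integrability strictly above the threshold $q=2$ that is required for the DiPerna--Lions renormalization used in the subsequent step of the proof.
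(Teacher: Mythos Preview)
Your outline is the standard route (indeed the one the paper cites from \cite{NoPo2}): exploit the monotonicity of $\vr\mapsto\vr^\gamma$ together with the effective viscous flux identity \eqref{vf_final} to get the weighted inequality \eqref{lim1}, then interpolate against $\vt\in L^{3m}$ to reach some $q>2$. The derivation of \eqref{lim1} and the final H\"older step (yielding the threshold $m>\tfrac{2}{3(\gamma-1)}$) are correct.

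There is one genuine gap, however, in your argument that the right-hand side of \eqref{lim1} is bounded \emph{uniformly in $k$}. You claim that, after absorbing $\tfrac{4}{3}\mu(\vt)+\xi(\vt)$ into the weight, the quantity $\int_\Omega(\overline{T_k(\vr)\Div\vu}-\overline{T_k(\vr)}\Div\vu)\dx$ is controlled by $\|T_k(\vr)\|_{L^p}\|\Div\vu\|_{L^{p'}}$ and then by $\|\vr\|_{L^p}$. But since $\Div\vu$ is only in $L^2$, this needs $p=2$, i.e.\ a uniform $L^2$ bound on $\vrd$, which Lemma~\ref{l 3.8} does \emph{not} provide for $\gamma$ close to $1$ (there $\vrd$ is merely in $L^{s\gamma}$ with $s\gamma$ possibly below $2$). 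The correct closure is a bootstrap: writing
\[
\overline{T_k(\vr)\Div\vu}-\overline{T_k(\vr)}\Div\vu=\text{w-}\!\lim_\delta\big(T_k(\vrd)-T_k(\vr)\big)\Div\vud+(T_k(\vr)-\overline{T_k(\vr)})\Div\vu,
\]
one bounds both pieces by $C\limsup_\delta\|T_k(\vrd)-T_k(\vr)\|_2\|\Div\vud\|_2$. Feeding this back through your H\"older step gives
\[
\limsup_\delta\|T_k(\vrd)-T_k(\vr)\|_q^q\le C\Big(\limsup_\delta\|T_k(\vrd)-T_k(\vr)\|_2\Big)^{q/(\gamma+1)}\le C\Big(\limsup_\delta\|T_k(\vrd)-T_k(\vr)\|_q\Big)^{q/(\gamma+1)},
\]
and since $q/(\gamma+1)<q$ one concludes a uniform-in-$k$ bound by Young. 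This self-improving step is what actually replaces the missing $L^2$ density bound; without it your argument only works for $\gamma$ large enough.
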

It is known (see  \cite[Lemma 3.8]{FeNoB}) 
that \eqref{conv} together with \eqref{osc} implies that $(\vr,\vu)$ satisfies the 
renormalized continuity equation.

\smallskip

\noindent {\bf Step 3. Strong convergence of the density.}
As $(\vr,\vu)$ and $(\vrd,\vud)$ satisfy the 
renormalized continuity equation, in particular we have 
$$
\intO{T_k(\vr) \Div \, \vu} = 0, \quad \intO{T_k(\vrd) \Div \, \vud}=0
$$
and the second identity implies
$$
\intO{\overline{T_k(\vr) \Div \, \vu}} = 0.
$$
Therefore using \eqref{vf_final} we get 
\begin{equation} \label{4.11b}
\begin{array}{c} 
\displaystyle \intO {\frac{1}{\frac 43 \mu(\vt) + \xi(\vt)}\Big(\overline{p(\vr,\vt) T_k(\vr)} -
\overline{p(\vr,\vt)}\, \, \overline{
T_k(\vr)}\Big)} \\
\displaystyle = \intO{\big(T_k(\vr) - \overline{T_k(\vr)}\big) \Div \vu} \to_{k \to \infty} 0,
\end{array}
\end{equation}
which together with \eqref{lim1} implies 
$$
\lim_{k \to \infty} \limsup_{\delta \to 0^+} \intO{|T_k(\vrd)
-T_k(\vr)|^q} = 0
$$
with $q$ as in Lemma \ref{l 7.4}. It remains to use the fact that 
$$
\|\vrd-\vr\|_1 \leq \|\vrd-T_k(\vrd)\|_1 + \|T_k(\vrd) -T_k(\vr)\|_1 + \|T_k(\vr) -\vr\|_1,
$$
which yields strong convergence of the density in $L^1$, therefore also in $L^p$
for $1 \leq p < s\gamma$. 

The above strong convergence of the density allows to remove all the bars in 
\eqref{weak_mom_lim}, \eqref{entropy_aprox_lim} and \eqref{8.20}. 
Collecting all the assumptions on $m$ we see that the most 
restrictive constraint is $m>\frac{2}{3(\gamma-1)}$ and for weak solutions
we must take into account $m>{\rm max}\{1,\frac{2\gamma}{3(3\gamma-4)}\}$. 
This completes the proof of Theorem \ref{t1}. 

\smallskip

{\bf Acknowledgement:}  The work of the first author (TP) was supported by the Polish NCN grant No. UMO-2014/14/M/ST1/00108. The second author (MP) was supported by the Czech Science Foundation (grant no. 16-03230S).


\begin{thebibliography}{99}

\bibitem {NoBr} B\v{r}ezina, J., Novotn\'y, A.: \emph{On Weak Solutions of Steady Navier--Stokes Equations for Monatomic
Gas}, {Comment. Math. Univ. Carolin.} 49,   611--632 (2008).

\bibitem {FeNoB} Feireisl, E., Novotn\'y, A.: \emph{Singular Limits in Thermodynamics of Viscous Fluids}, Advances in Mathematical Fluid Mechanics, Birkh\"auser, Basel, 2009.

\bibitem{FPT} Feireisl, E., Petzeltov\'a, H., Trivisa, K.:\emph{Multicomponent reactive flows: global-in-time existence for large data}, Commun. Pure Appl. Anal. 7,  no. 5, 1017--1047 (2008). 

\bibitem{FrStWe} Frehse, J., Steinhauer, M., Weigant, W.: \emph{The Dirichlet problem for steady viscous compressible flow in 3-D}, J. Math. Pures Appl. 97, 85--97 (2009).

\bibitem{VG} Giovangigli, V: \emph{Multicomponent Flow Modeling}, Model. Simul. Sci. Eng. Technol., Birkh\"auser, Boston, 1999.

\bibitem{GPZ} Giovangigli, V., Pokorn\'y, M., Zatorska, E.: \emph{On the steady flow of reactive gaseous mixture},
Analysis (Berlin) 35, no. 4, 319--341  (2015).

\bibitem{Ji} Jiang, S., Zhou, C.: \emph{Existence of weak solutions to the three-dimensional steady compressible Navier--Stokes equations}, Ann. Inst. H. Poincar\'e, Anal. Non Lin\'eaire 28, 485--498  (2011).

\bibitem{JeNo} Jessl\'e, D., Novotn\'y, A.:  \emph{Existence of renormalized weak solutions to the steady equations describing compressible fluids in barotropic regime}, J. Math. Pures Appl. 99, 280--296 (2013).

\bibitem{JeNoPo} 
Jessl\'e, D., Novotn\'y, A. and Pokorn\'y, M.: \emph{Steady Navier--Stokes--Fourier system with slip boundary conditions}, 
Math. Models Methods Appl. Sci 24  751--781, (2014).

\bibitem {KrNePo} Kreml, O., Ne\v{c}asov\'a, \v{S}., Pokorn\'y, M.:  \emph{On the steady equations for compressible radiative gas}, Zeitschrift f\"ur angewandte Mathematik und Physik  64, 539--571 (2013).

\bibitem{Li_Book} Lions, P.L.: \emph{Mathematical Topics in Fluid Dynamics, Vol.2: Compressible Models}, Oxford Science Publication, Oxford, 1998. 

\bibitem{MuPoZa} Mucha, P.B., Pokorn\'y, M., Zatorska, E.: \emph{Heat-conducting, compressible mixtures with multicomponent diffusion: Construction
of a weak solution}, SIAM J. Math. Anal. 47, no. 5, 3747--3797 (2015). 

\bibitem{MPZ_HB} Mucha, P.B., Pokorn\'y, M., Zatorska, E.: \emph{Existence of stationary weak solutions for the
heat conducting flows}, in: Handbook of Mathematical Analysis in Mechanics of Viscous Fluids, eds. Y. Giga and A. Novotn\'y, Springer Verlag (2016).

\bibitem{NoPo1} Novotn\'y, A., Pokorn\'y, M.: \emph{Steady compressible Navier--Stokes--Fourier system for monoatomic gas and its generalizations},
J. Differential Equations 251,  no. 2, 270--315 (2011).

\bibitem{NoPo2} Novotn\'y, A., Pokorn\'y, M.: \emph{Weak and variational solutions to steady equations for compressible heat conducting fluids}, SIAM J. Math. Anal. 43, no. 3, 1158--1188  (2011).

\bibitem {NS} Novotn\'y, A., Stra\v{s}kraba, I.: \emph{Introduction to the Mathematical Theory of Compressible Flow}, Oxford University Press, Oxford, 2004. 

\bibitem{PlSo} Plotnikov, P.I., Sokolowski, J.: \emph{On compactness, domain dependence and existence of steady state solutions to compressible isothermal Navier--Stokes equations}, J. Math. Fluid Mech. 7, 529--573 (2005). 

\bibitem{Ro95} Rosner, D.E.: \emph{Transport Processes in Chemically Reacting Flow Systems}, Butterworths, Boston, 1986.

\bibitem{SePa97} Seinfeld, J.H., Pandis, S.N.: \emph{Atmospheric Chemistry and Physics}, John Wiley and Sons, 1998.

\bibitem{Wi111} Williams, F.A.: \emph{Combustion Theory}, The Benjamin/Cummings Pub. Co. Inc., Menlo Park, 1985.


\end{thebibliography}
\end{document}